\newcommand{\@pmath}{$p$}
\title[The Number of Roots of a Random Polynomial over $\QQ_p$]{The Number of Roots of a Random Polynomial over The Field of \texorpdfstring{\@pmath}{p}-adic Numbers}
\author{Roy Shmueli}
\address{Raymond and Beverly Sackler School of Mathematical Sciences, Tel Aviv University, Tel Aviv 69978, Israel}
\email{royshmueli@tauex.tau.ac.il}
\newcommand{\autotheorem}[3]{
\newaliascnt{#1counter}{#2}
\newtheorem{#1}[#1counter]{#3}
\expandafter\newcommand\csname #1counterautorefname\endcsname{#3}
}
\theoremstyle{plain}
\theoremstyle{definition}
\theoremstyle{remark}
\newcommand*{\ZZ}{{\mathbb{Z}}}
\newcommand*{\QQ}{{\mathbb{Q}}}
\newcommand*{\RR}{{\mathbb{R}}}
\newcommand*{\CC}{{\mathbb{C}}}
\newcommand*{\FF}{{\mathbb{F}}}
\newcommand{\Acl}{\mathcal{A}}
\newcommand{\Bcl}{\mathcal{B}}
\newcommand{\Scl}{\mathcal{S}}
\renewcommand*{\Pr}{{\mathbb{P}}}
\newcommand*{\Ex}{{\mathbb{E}}}
\newcommand*{\cond}{\;\middle|\;}
\DeclarePairedDelimiter{\pa}{\lparen}{\rparen}
\DeclarePairedDelimiter{\br}{\lbrack}{\rbrack}
\DeclarePairedDelimiter{\ang}{\langle}{\rangle}
\DeclarePairedDelimiter{\set}{\{}{\}}
\DeclarePairedDelimiter{\abs}{\lvert}{\rvert}
\DeclarePairedDelimiter{\floor}{\lfloor}{\rfloor}
\newcommand*{\rv}{{\xi}}
\newcommand*{\rvs}{{\eta}}
\newcommand*{\dlt}{{\varepsilon_1}}
\renewcommand*{\v}{{\vec{v}}}
\renewcommand*{\u}{{\vec{u}}}
\newcommand*{\w}{{\vec{w}}}
\newcommand*{\x}{{\vec{x}}}
\newcommand*{\y}{{\vec{y}}}
\newcommand*{\vze}{{\vec{0}}}
\newcommand*{\@rdsetExplicit}[2][d]{{R^{\pa{#1}}\pa*{#2}}}
\newcommand*{\@rdsetOne}[1]{{R\pa*{#1}}}
\newcommand{\rdset}{\@ifstar{\@rdsetOne}{\@rdsetExplicit}}
\newcommand{\rfield}[2][\QQ_p]{R_{#1}\pa*{#2}}
\newcommand*{\hdset}[3][d]{{H^{\pa{#1}}_{#2}\pa*{#3}}}
\newcommand*{\rdsetu}[2][d]{{\tilde{R}^{\pa{#1}}\pa*{#2}}}
\newcommand*{\stirling}[2]{\genfrac{\{}{\}}{0pt}{}{#1}{#2}}
\newcommand*{\rmod}[1]{{\ZZ/{#1}\ZZ}}
\newcommand*{\eqsys}[2]{{\left\{\begin{array}{l} {#1} \\ {#2} \end{array}\right.}}
\newcommand*{\tindex}{{\substack{0\le j < m \\ 1 \le r < p}}}
\newcommand*{\tindexarg}[2][p]{{\substack{0\le j < {#2} \\ 1 \le r < #1}}}
\newcommand*{\gseq}{\vec{g}}
\newcommand*{\hseq}{\vec{h}}
\newcommand*{\polysplit}[1][f]{\hat{#1}}
\newcommand*{\indic}[1]{\mathbbm{1}_{#1}}
\newcommand*{\lift}[1]{L\pa*{#1}}
\DeclareMathOperator{\he}{H}
\DeclareMathOperator{\weight}{Weight}
\DeclareMathOperator{\Var}{Var}
\DeclareMathOperator{\Res}{Res}
\renewcommand{\pod}[1]{\allowbreak\mathchoice
  {\if@display \mkern 18mu\else \mkern 8mu\fi (#1)}
  {\if@display \mkern 18mu\else \mkern 8mu\fi (#1)}
  {\mkern4mu(#1)}
  {\mkern4mu(#1)}
}
\begin{document}

\begin{abstract}
We study the roots of a random polynomial over the field of p-adic numbers.
For a random monic polynomial with coefficients in $\ZZ_p$, we obtain an asymptotic formula for the
factorial moments of the number of roots of this polynomial.
In addition, we show the probability that a random polynomial of degree $n$ has more than $\log n$ roots is $O\pa*{n^{-K}}$ for some $K > 0$.
\end{abstract}

\maketitle

\listoffixmes

\section{Intorduction}
\label{sec:introduction}

Consider the random polynomial
\begin{equation*}
  f\pa*{X} = \rv_0 + \rv_1 X + \dots + \rv_n X^n
\end{equation*}
where $\rv_0, \dots, \rv_n$ are independent random variables taking values in some field $F$.
For a subset, $E$, of the algebraic closure of $F$, we define $\rfield[E]{f}$ to be the number of distinct roots in $E$ of the polynomial $f$, i.e.
\begin{equation*}
\rfield[E]{f} = \#\set*{x \in E : f\pa*{x} = 0}.
\end{equation*}

The study of the distribution of $\rfield[\RR]{f}$ when $F = \RR$, has a long history.
It goes back to Bloch and P\'{o}lya \cite{bloch1932roots} who showed that $\Ex\br*{\rfield[\RR]{f}} = O\pa*{\sqrt{n}}$ as $n\to\infty$ when the coefficients of $f$ takes the values of $\pm1$ both with equal probability.
Later, their results were improved and generalized on many occasions, see \cite{littlewood1938number, littlewood1939number, kac1943average, erdos1956number, ibragimov1971expected, nguyen2016number, soze2017real1, soze2017real2}.
In particular, Maslova~\cite{maslova1974distribution, maslova1974variance} determined asymptotically all higher moments of $\rfield[\RR]{f}$ as $n \to \infty$ when $f$ is a general random real polynomial.

Evans~\cite{evans2006expected} studied $\rfield[\QQ_p]{f}$ in the $p$-adic setting, i.e., $E = F =\QQ_p$ and in fact a multi-variate version.
In his model, the coefficients are not independent and the randomicy comes from the Haar measure (cf.\ \cite[Theorem 5]{kulkarni2021padic} for generalizations).
Buhler, Goldstein, Moews, and Rosenberg~\cite{buhler2006probability} worked in our setting where th coefficient are independent and distributed according to the Haar measure on $\ZZ_p$. They computed the probability that $\rfield[\QQ_p]{f} = n$, i.e.\ $f$ totally splits, conditioned on $
\rv_n=1$. Recently, Caruso \cite{caruso2021zeroes} generalized their result and obtained an integral formula for $\rfield[E]{f}$, for any open subset $E$ of finite field extension $K/\QQ_p$.

In \cite{shmueli2021expected}, the author estimated $\Ex\br*{\rfield{f}}$ for general distributions of $f$.
For example, if $p\ne 2$ and $\rv_0, \dots, \rv_{n-1}$ are taking values of $\pm1$ both with equal distribution then for any $\varepsilon > 0$
\begin{equation*}
\Ex\br*{\rfield{f}} = \frac{p-1}{p+1} + O\pa*{n^{-1/4 + \varepsilon}}
\end{equation*}
as $n\to\infty$.

In this paper, we will consider the case where $f$ is a monic polynomial with $p$-adic integer coefficients, i.e., $\rv_n = 1$ and $\rv_0, \dots, \rv_{n-1} \in \ZZ_p$.
In this case, we have that $\rfield{f} = \rfield[\ZZ_p]{f}$, so we abbreviate and write $\rdset*{f} = \rfield[\ZZ_p]{f}$.

Our study is focused on the factorial moments of $\rdset*{f}$ which is defined as follows.
We call a set of exactly $d$ elements a $d$-set, and we denote $\rdset{f}$ to be the number of $d$-sets of roots of $f$ i.e.
\begin{equation*}
\rdset{f} = \binom{\rdset*{f}}{d}.
\end{equation*}
The expected value of $\rdset{f}$ is called the $d$-th factorial moment of $\rdset*{f}$.
Computing the factorial moments of $\rdset*{f}$ is equivalent to computing its distribution and its standard moments, using:
\begin{equation}
\label{eq:factorial-to-standard-moment}
\Ex\br*{\rdset*{f}^m} = \sum_{d=1}^m \stirling{m}{d} d!\, \Ex\br*{\rdset{f}},
\end{equation}
where the curly braces denote Stirling numbers of the second kind.

Bhargava, Cremona, Fisher and Gajovi\'c in \cite{bhargava2021density}, computed the $d$-factorial moments $\rdset*{f}$ when $\rv_i$ is distributed according to Haar measure normalized on $\ZZ_p$ and $p\ZZ_p$.
They showed that for $n \ge 2d$ the expectation $\Ex\br*{\rdset{f}}$ is independent of $n$.
Moreover, if we denote $\alpha\pa*{d}$ (respectively $\beta\pa*{d}$) to be $\Ex\br*{\rdset{f}}$ when $n \ge 2d$ and $\rv_i$ is distributed according to Haar measure normalized on $\ZZ_p$ (respectively $p\ZZ_p$), then we have the following power series relation between them:
\begin{equation}
\label{eq:alpha-beta-power-series}
  \sum_{d=0}^\infty \alpha\pa*{d} t^d = \pa*{\sum_{d=0}^\infty \beta\pa*{d} t^d}^p.
\end{equation}

Our result deals with a rather general distribution for the coefficients.
Using a similar power series, we define $\gamma\pa*{d}$ by
\begin{equation}
\label{eq:gamma-def}
\sum_{d=0}^\infty \gamma\pa*{d} t^d = \pa*{\sum_{d=0}^\infty \beta\pa*{d} t^d}^{p-1}.
\end{equation}
The values of $\gamma\pa*{d}$ satisfy the following theorem.
\begin{thm}
\label{thm:main-small}
Let $f\pa*{X}=\rv_{0}+\rv_{1}X+\dots+\rv_{n-1}X^{n-1}+X^{n}$ where $\rv_{0},\dots,\rv_{n-1}$ are i.i.d.\ random variables taking values in $\ZZ_{p}$ such that $\rv_i\bmod p$ is non-constant for all $i=0,\dots, n-1$.
Then for any $d = d\pa*{n} = o\pa*{\log^{1/2} n}$ and any $\varepsilon > 0$ we have
\begin{equation*}
  \Ex\br*{\rdset{f} \cond p\nmid \rv_0} =
    \gamma\pa*{d} + O\pa*{n^{-1/4+\varepsilon}}
\end{equation*}
as $n\to\infty$. Here the implied constant depends only on $p$, $\varepsilon$ and the distribution of $\rv_0$.
\end{thm}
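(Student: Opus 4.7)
My plan is to mirror the product structure $\sum_d \gamma(d)t^d = \pa*{\sum_d \beta(d) t^d}^{p-1}$ from \eqref{eq:gamma-def} by decomposing the roots of $f$ according to their residue modulo $p$, obtaining $p-1$ approximately independent clusters that individually resemble the Haar model of Bhargava--Cremona--Fisher--Gajovi\'c. Under $p\nmid\rv_0$ one has $\bar f(0)\neq 0$ in $\FF_p[X]$, so no root of $f$ lies in $p\ZZ_p$, and I may write
\begin{equation*}
\rdset*{f} = \sum_{r=1}^{p-1} R_r(f),\quad\text{where}\quad R_r(f) := \#\set*{x\in r+p\ZZ_p : f\pa*{x}=0}.
\end{equation*}
The Vandermonde identity for binomial coefficients then gives
\begin{equation*}
\binom{\rdset*{f}}{d} = \sum_{d_1+\dots+d_{p-1}=d}\prod_{r=1}^{p-1}\binom{R_r(f)}{d_r},
\end{equation*}
so by \eqref{eq:gamma-def} Theorem~\ref{thm:main-small} reduces to the composition-uniform estimate
\begin{equation*}
\Ex\br*{\prod_{r=1}^{p-1}\binom{R_r(f)}{d_r}\cond p\nmid\rv_0} = \prod_{r=1}^{p-1}\beta\pa*{d_r} + O\pa*{n^{-1/4+\varepsilon}}
\end{equation*}
for every composition $d_1+\dots+d_{p-1}=d$ with $d=o\pa*{\log^{1/2}n}$; there are only $n^{o(1)}$ such compositions, so summing the errors will remain within the stated bound.

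\textbf{Reduction to local polynomials.} For each $r\in\set{1,\dots,p-1}$ the substitution $X=r+pY$ identifies $R_r(f)$ with the number of $\ZZ_p$-roots of $g_r(Y) := f(r+pY) = \sum_{j=0}^n \frac{f^{(j)}(r)}{j!}(pY)^j$. In the Haar setting, the coefficients of $g_r$ beyond the leading one are independent Haar samples on $p\ZZ_p$, so each marginal factorial moment is $\beta(d_r)$, and independence across $r$ is manifest. In our generality the $(g_r)_r$ are not literally independent across $r$ (they are all assembled from the same $\rv_i$'s), so the estimate above must quantify their near-independence as well as the near-Haar distribution of each marginal.

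\textbf{Approximation by the Haar model via Fourier.} The main analytic step is a character-sum bound. I would choose a truncation level $m=\Theta(\log n)$ large enough for Hensel's lemma to decide the root count inside each coset $r+p\ZZ_p$ from $f\bmod p^m$; whether a candidate $d$-tuple of roots lifts is then determined by the joint law of finitely many linear evaluations $\set{f(r+py)\bmod p^m}$ in $(\rv_0,\dots,\rv_{n-1})$. Fourier inversion on a suitable $(\ZZ/p^m\ZZ)^{M}$ expresses the discrepancy with the Haar case as a sum over nontrivial additive characters of the $\rv_i$'s; the non-constancy of $\rv_i\bmod p$ furnishes a spectral gap $\abs*{\Ex\br*{e^{2\pi i s\rv_0/p^m}}}\le 1-\eta$ for frequencies $s$ with $p\nmid s$, and the standard dyadic decomposition along $v_p(s)$ handles higher-divisible frequencies. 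Multiplying over the $n$ i.i.d.\ coefficients yields exponential decay in $n$ for each nontrivial character, and summing over the $p^{O(m)}$ relevant frequencies gives the required $O\pa*{n^{-1/4+\varepsilon}}$ total-variation bound, in the spirit of the first-moment estimate in \cite{shmueli2021expected} but applied to the full tuple of cross-cluster evaluations.

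\textbf{Assembly and main obstacle.} Given the Haar approximation, the mixed moment factors as $\prod_r \beta(d_r)$ up to the stated error, and summing over compositions yields $\gamma(d)$. The hypothesis $d=o\pa*{\log^{1/2}n}$ is used to absorb combinatorial factors of the form $C^{d^2}$ arising from the number of candidate tuples and from Taylor-expansion bookkeeping for the $g_r$'s, so that they dissolve into $n^{\varepsilon}$. I expect the main obstacle to be the Fourier step: producing a single quantitative bound that simultaneously controls the joint distribution of all $p-1$ clusters, uniformly in the composition $(d_1,\dots,d_{p-1})$, rather than handling one cluster at a time. Equivalently, the challenge is to promote the univariate spectral gap of $\rv_0\bmod p$ to a multidimensional gap acting on the entire system $\set*{f(r+py)}_{r,y}$ of cross-cluster evaluations without the loss accumulating above the threshold $n^{-1/4+\varepsilon}$.
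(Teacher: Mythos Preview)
Your proposal is correct and follows essentially the same route as the paper. The paper derives Theorem~\ref{thm:main-small} from the more general Theorem~\ref{thm:main} (i.i.d.\ with non-constant reduction satisfies Assumption~\ref{main-assumption}, and $d=o(\log^{1/2}n)$ forces $C=1/4$ in \eqref{eq:main-exponent-const}), whose proof is exactly your decomposition $f_r(X)=f(r+pX)$, a Fourier/random-walk argument on the joint Hasse derivatives (your anticipated ``multidimensional spectral gap'' is precisely Lemma~\ref{cor:hasse:v-minimal-weight}, showing the step vectors contain a basis of $\FF_p^{m(p-1)}$), and a comparison to the Haar model via a truncation space $\Upsilon_k$ and Henselian root counting (Section~\ref{sec:ups-space}).
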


For larger values of $d$, the main term of \autoref{thm:main} is smaller than the error term since $\gamma\pa*{d} \le e^{-c d^2}$ for some $c > 0$, see \autoref{gamma-estimate}.
For larger $d$, we take a wider point of view.
\begin{asmp}
\label{main-assumption}
Assume $\rv_0, \dots, \rv_{n-1}$ are independent random variables taking values in $\ZZ_p$ such that there exists $0 < \tau < 1$ independent of $n$ which satisfies that
\begin{equation*}
  \sum_{\bar{x}\in\rmod p} \Pr\pa*{\rv_i \equiv \bar{x} \pmod{p}}^{2} < 1 - \tau
\end{equation*}
for each $i=1,\dots,n-1$.
\end{asmp}
Note that \autoref{main-assumption} does not give any requirements on $\rv_0$ beside being independent of the other $\rv_i$ and taking values in $\ZZ_p$.
\begin{thm}
\label{thm:main}
Let $f\pa*{X}=\rv_{0}+\rv_{1}X+\dots+\rv_{n-1}X^{n-1}+X^{n}$ where $\rv_0,\dots,\rv_{n-1}$ satisfy \autoref{main-assumption}.
Then for any $d = d\pa*{n}$ such that $\limsup_{n\to\infty} d / \log n < \pa*{16 \log p}^{-1}$ there exists an explicit constant $C > 0$ such that for any $\varepsilon > 0$
\begin{equation*}
  \Ex\br*{\rdset{f} \cond p\nmid \rv_0} =
    \gamma\pa*{d} + O\pa*{n^{-C+\varepsilon}}
\end{equation*}
as $n\to\infty$.
Here the implied constant depends only on $p$, $\varepsilon$ and $\tau$ from \autoref{main-assumption}.
\end{thm}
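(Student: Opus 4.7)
The plan is to decompose the $d$-sets of roots of $f$ in $\ZZ_p$ by their distribution across the $p-1$ residue classes modulo $p$. The conditioning $p\nmid\rv_0$ forces every root $x\in\ZZ_p$ of $f$ to satisfy $x\not\equiv 0\pmod p$, so $x$ lies in exactly one coset $r+p\ZZ_p$ for some $r\in\{1,\dots,p-1\}$. Writing $R_r(f)$ for the number of roots of $f$ in $r+p\ZZ_p$, any $d$-set of roots splits across the cosets with multiplicities $(d_1,\dots,d_{p-1})$ summing to $d$, giving the combinatorial identity
\begin{equation*}
\rdset{f} \;=\; \sum_{\substack{d_1+\dots+d_{p-1}=d \\ d_r\ge 0}} \prod_{r=1}^{p-1} \binom{R_r(f)}{d_r}.
\end{equation*}
Computing $\Ex\br*{\rdset{f}\cond p\nmid\rv_0}$ therefore reduces to controlling the joint factorial moments of the $R_r(f)$.

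Next, for each $r\in\{1,\dots,p-1\}$ I would perform the substitution $X=r+pY$ to obtain $g_r(Y)=f(r+pY)$, whose roots in $\ZZ_p$ correspond bijectively to roots of $f$ in $r+p\ZZ_p$. Expanding shows that the coefficient of $Y^k$ in $g_r$ is $p^k\sum_{i\ge k}\binom{i}{k}\rv_i r^{i-k}$, automatically divisible by $p^k$. After dividing out these forced powers of $p$, the resulting ``reduced'' coefficients are $\ZZ_p$-linear combinations of the $\rv_i$. Using \autoref{main-assumption}, I would show that their joint distribution modulo $p^N$, for $N$ growing with $d$, is close in total variation to the Haar-uniform distribution on $p\ZZ_p^N$ which defines $\beta$, with error decaying as a power of $n$. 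This is where the i.i.d.\ structure in \autoref{thm:main-small}, or more generally the spectral gap guaranteed by \autoref{main-assumption}, is exploited through a local-limit/Fourier argument over $\rmod{p^N}$.

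Combining these steps, I would establish that the cross moments factorize asymptotically,
\begin{equation*}
\Ex\br*{\prod_{r=1}^{p-1}\binom{R_r(f)}{d_r}\cond p\nmid\rv_0} \;=\; \prod_{r=1}^{p-1}\beta(d_r) + (\text{error}),
\end{equation*}
because a Fourier/character expansion in each coset decouples the contributions, with the spectral gap bounding all off-diagonal terms. Summing over compositions $(d_1,\dots,d_{p-1})$ of $d$ and invoking the power-series identity~\eqref{eq:gamma-def} then collapses the main term to $\gamma(d)$.

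The principal obstacle is propagating all error terms uniformly as $d$ grows with $n$. A $d$-set of roots can only be pinned down by examining the coefficients to precision $p^{N}$ for $N$ of order $d$, and the Haar-approximation of the middle step loses roughly one factor of $p^N$ per coefficient. This is precisely where the restriction $\limsup d/\log n < (16\log p)^{-1}$ enters: it guarantees $p^{16d}=o\pa*{n^{1-\delta}}$ for some $\delta>0$, leaving enough room in a Fourier-average over the remaining $\rv_i$'s to obtain a power-saving error $O\pa*{n^{-C+\varepsilon}}$. Tracking the explicit dependence on $p$, on $\tau$, and on the chosen precision $N$ through the total-variation and convolution bounds gives the explicit constant $C$; the $\varepsilon$-loss absorbs the standard divisor-type polynomial factors that arise along the way.
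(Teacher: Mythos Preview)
Your outline is essentially the paper's own proof: the paper defines $f_r(X)=f(r+pX)$, uses the decomposition $\rdset{f}=\sum_{d_0+\dots+d_{p-1}=d}\prod_r\rdset[d_r]{f_r}$ together with the observation that $p\nmid\rv_0$ kills the $r=0$ term, proves via a random-walk Fourier argument (\autoref{prop:hasse:coeff-prob}, \autoref{lem:main:almost-uniform}) that the joint reduction of $(f_1,\dots,f_{p-1})$ modulo $p^k$ is asymptotically uniform on $\Upsilon_k^{p-1}$, and then invokes \autoref{prop:ups:d-sets-in-ups} to identify each factor with $\beta(d_r)$ and sum to $\gamma(d)$ via \eqref{eq:gamma-fromula}. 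The one ingredient you leave implicit is the paper's use of $k$-Henselian roots (\autoref{ups:simple-roots-henselians}, \autoref{lem:ups:henselian-approx}) to make precise your remark that ``a $d$-set of roots can be pinned down by examining the coefficients to precision $p^N$''---this is where the comparison with the $\beta$-model is actually carried out, and where the entropy term $\he_p(2d/k)$ in the error (hence the explicit constant $C$ of \eqref{eq:main-exponent-const}) arises.
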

The constant $C$ is determined by the following
\begin{equation}
\label{eq:main-exponent-const}
C = \frac{1}{4} -\frac{1}{4}  \he_p \pa*{4\log p\cdot \limsup_{n\to \infty} \frac{d}{\log n}},
\end{equation}
where $\he_p$ is the binary entropy function with base $p$, defined by
\[
\he_p \pa*{x} = x \log_p \frac{1}{x} + \pa*{1-x} \log_p \frac{1}{1-x}.
\]
If the random variables $\rv_0, \dots, \rv_{n-1}$ are i.i.d.\ and $\rv_i\bmod{p}$ is non-constant, then they satisfy \autoref{main-assumption}.
Thus, \autoref{thm:main-small} follows from \autoref{thm:main} and \eqref{eq:main-exponent-const}.

Some special cases that are studied frequently when the coefficients are i.i.d.\ distributed uniformly in one of the sets $\set*{-1,0,1}$, $\set*{0,1}$ or $\set*{0,\dots, p-1}$.
In those cases, we can estimate $\Ex\br*{\rdset{f}}$ without conditioning on $\rv_0$ using the following:
\begin{thm}
\label{main-with-nonunit-root}
Let $f\pa*{X}=\rv_{0}+\rv_{1}X+\dots+\rv_{n-1}X^{n-1}+X^{n}$ be a random polynomial where $\rv_{0},\dots,\rv_{n-1}$ are random variables taking values in $\ZZ_{p}^\times\cup\set*{0}$ and satisfying \autoref{main-assumption}.
Then for any $d = d\pa*{n}$ such that $\limsup_{n\to\infty} d / \log n < \pa*{16 \log p}^{-1}$ there exists an explicit constant $C > 0$, defined in \eqref{eq:main-exponent-const}, such that for any $\varepsilon > 0$
\begin{equation*}
  \Ex\br*{\rdset{f}} =
    \gamma\pa*{d} + \Pr\pa*{\rv_0 = 0} \gamma\pa*{d-1} + O\pa*{n^{-C+\varepsilon}}
\end{equation*}
as $n\to\infty$.
Here the implied constant depends only on $p$ and $\varepsilon$.
\end{thm}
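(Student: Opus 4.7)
The plan is to separate the contribution of the potential root at $0$---which is present iff $\rv_0 = 0$---from that of the unit roots, and reduce the latter to \autoref{thm:main}.

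The first step is a structural observation: because each $\rv_i$ lies in $\ZZ_p^\times \cup \set*{0}$, every non-zero root of $f$ must in fact be a unit. To see this, set $k = \min\set*{i \ge 0 : \rv_i \ne 0}$ (well-defined since $f$ is monic, so $k \le n$) and factor $f\pa*{X} = X^k h_k\pa*{X}$ with $h_k$ monic of degree $n-k$ and $h_k\pa*{0} = \rv_k$. Any root $x$ with $v_p\pa*{x} \ge 1$ and $x \ne 0$ would force $h_k\pa*{x} = 0$, but reducing modulo $p$ gives $h_k\pa*{x} \equiv \rv_k \not\equiv 0 \pmod p$, a contradiction. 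Therefore $S\pa*{f} = S\pa*{h_k} \sqcup \set*{0 : \rv_0 = 0}$ as a disjoint union, and Pascal's identity $\binom{m+1}{d} = \binom{m}{d} + \binom{m}{d-1}$ yields the pointwise equality
\begin{equation*}
\rdset{f} = \rdset{h_k} + \indic{\rv_0 = 0}\,\rdset[d-1]{h_k}.
\end{equation*}

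Next, I take expectation by conditioning on the event $k = j$ for $0 \le j \le n-1$. By independence of the $\rv_i$, this conditioning only restricts $\rv_j$ to $\set*{\rv_j \ne 0}$; the remaining coefficients $\rv_{j+1}, \dots, \rv_{n-1}$ keep their joint distribution and hence still satisfy \autoref{main-assumption}. Thus $h_k$ is, conditionally, a monic polynomial of degree $n-j$ to which \autoref{thm:main} applies (the event $\rv_j \ne 0$ is precisely $p \nmid \rv_j$ under our hypotheses), yielding
\begin{equation*}
\Ex\br*{\rdset{h_k} \cond k = j} = \gamma\pa*{d} + O\pa*{\pa*{n-j}^{-C+\varepsilon}},
\end{equation*}
with the analogous estimate for $\rdset[d-1]{h_k}$. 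Summing over $j$ and using $\sum_{j=0}^{n}\Pr\pa*{k=j} = 1$ together with $\sum_{j=1}^{n}\Pr\pa*{k=j} = \Pr\pa*{\rv_0 = 0}$, the main terms assemble into $\gamma\pa*{d} + \Pr\pa*{\rv_0 = 0}\,\gamma\pa*{d-1}$.

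The main technical obstacle is controlling the error, particularly for $j$ close to $n$, where $O\pa*{\pa*{n-j}^{-C+\varepsilon}}$ becomes vacuous. Here I use that \autoref{main-assumption} implies $\Pr\pa*{\rv_i = 0}^2 \le \sum_{\bar x}\Pr\pa*{\rv_i \equiv \bar x \pmod p}^2 < 1-\tau$ for every $i \ge 1$, whence $\Pr\pa*{k \ge m} \le \pa*{1-\tau}^{\pa*{m-1}/2}$ decays exponentially. Splitting the $j$-sum at $j = n/2$, the portion $j \le n/2$ contributes $O\pa*{n^{-C+\varepsilon}}$ directly. For $j > n/2$ I instead use the deterministic bound $\rdset{h_k} \le \binom{n-k}{d} \le n^d = n^{O\pa*{\log n}}$ combined with the exponentially small probability $\Pr\pa*{k > n/2} = e^{-\Omega\pa*{n}}$, whose product is $e^{-\Omega\pa*{n}}$ and hence negligible. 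The boundary case $k = n$ (where $f = X^n$), together with the telescoping remainder $\Pr\pa*{k=n}$ in the main-term sum, contributes similarly. Assembling everything yields $\gamma\pa*{d} + \Pr\pa*{\rv_0 = 0}\,\gamma\pa*{d-1} + O\pa*{n^{-C+\varepsilon}}$, completing the proof.
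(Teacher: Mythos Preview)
Your argument is correct, but it takes a different and more laborious route than the paper. The paper works directly with the residue-class decomposition
\[
\rdset{f} \;=\; \sum_{d_0+\dots+d_{p-1}=d}\ \prod_{r=0}^{p-1}\rdset[d_r]{f_r},
\]
combined with \autoref{pre:nonunit-roots} (your structural observation, phrased as ``$f_0\pa*{X}=f\pa*{pX}$ has no nonzero $\ZZ_p$-root almost surely'') to see that $\rdset[d_0]{f_0}$ equals $1$, $\indic{\rv_0=0}$, or $0$ according as $d_0=0$, $d_0=1$, or $d_0\ge 2$. This leaves two expectations of the shape $\Ex\br*{\sum_{d_1+\dots+d_{p-1}=d'}\prod_{r=1}^{p-1}\rdset[d_r]{f_r}}$ for $d'\in\set*{d,d-1}$, and each is evaluated in one stroke by \autoref{ex-sum-prod}, the \emph{unconditioned} lemma that underlies \autoref{thm:main}. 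No factoring of $X^k$, no conditioning on $k$, and no tail estimate are needed.

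Your route instead invokes \autoref{thm:main} as a black box, and since that theorem is stated only conditionally on $p\nmid\rv_0$, you manufacture that condition by passing to $h_k$ and then must sum over $k$, check that the same exponent $C$ serves uniformly for all degrees $n-j$ with $j\le n/2$ (it does, since $\log\pa*{n-j}\sim\log n$), and control the tail $\Pr\pa*{k>n/2}$ via the $\tau$-bound from \autoref{main-assumption}. All of this is sound, and it has the conceptual virtue of being a genuine reduction of \autoref{main-with-nonunit-root} to the already-proved \autoref{thm:main} rather than to its internal engine \autoref{ex-sum-prod}; but the paper's approach is shorter precisely because \autoref{ex-sum-prod} already delivers the unit-root count of $f$ itself without any conditioning on the constant term.
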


\autoref{thm:main} combined with \eqref{eq:factorial-to-standard-moment} give us a way to estimate any fixed moment of $\rdset*{f}$.
In particular, we can compute the expected value and the variance of $\rdset*{f}$ as follows:
\begin{cor}
\label{ex-and-var}
Let $f\pa*{X}=\rv_{0}+\rv_{1}X+\dots+\rv_{n-1}X^{n-1}+X^{n}$ where $\rv_0,\dots,\rv_{n-1}$ satisfy \autoref{main-assumption} then for any $\varepsilon > 0$
\begin{align*}
\Ex\br*{\rdset*{f} \cond p\nmid \rv_0} &= \frac{p-1}{p+1} + O\pa*{n^{-1/4+\varepsilon}} \qquad\text{and} \\
\Var\br*{\rdset*{f} \cond p\nmid \rv_0} &= \frac{\pa*{p^2+1}^2 \pa*{p-1}}{\pa*{p^4+p^3+p^2+p+1} \pa*{p+1}} + O\pa*{n^{-1/4+\varepsilon}}
\end{align*}
as $n\to\infty$.
Here the implied constant depends only on $p$, $\varepsilon$ and $\tau$ from \autoref{main-assumption}.
\end{cor}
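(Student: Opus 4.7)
The plan is to obtain the corollary as a direct consequence of \autoref{thm:main} applied to $d=1$ and $d=2$, so that the problem reduces to the explicit closed-form evaluation of $\gamma\pa*{1}$ and $\gamma\pa*{2}$.

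First, I apply \autoref{thm:main} with $d=1$; since $\rdset{f} = \rdset*{f}$ in this case, this immediately gives
\[
\Ex\br*{\rdset*{f} \cond p \nmid \rv_0} = \gamma\pa*{1} + O\pa*{n^{-1/4+\varepsilon}}.
\]
For the variance, I specialize \eqref{eq:factorial-to-standard-moment} to $m=2$ to obtain the pointwise identity $\rdset*{f}^2 = \rdset*{f} + 2 \rdset[2]{f}$, apply \autoref{thm:main} to $d=1,2$, and take conditional expectations:
\[
\Ex\br*{\rdset*{f}^2 \cond p \nmid \rv_0} = \gamma\pa*{1} + 2\gamma\pa*{2} + O\pa*{n^{-1/4+\varepsilon}}.
\]
Squaring the expectation formula and subtracting, and noting that $\gamma\pa*{1} = O\pa*{1}$ so the cross terms are absorbed into the error, yields
\[
\Var\br*{\rdset*{f} \cond p \nmid \rv_0} = \gamma\pa*{1} + 2\gamma\pa*{2} - \gamma\pa*{1}^2 + O\pa*{n^{-1/4+\varepsilon}}.
\]

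It then remains to identify $\gamma\pa*{1}$ and $\gamma\pa*{2}$ explicitly. Extracting the coefficients of $t$ and $t^2$ in \eqref{eq:gamma-def}, together with $\beta\pa*{0}=1$, gives
\[
\gamma\pa*{1} = \pa*{p-1}\beta\pa*{1}, \qquad \gamma\pa*{2} = \pa*{p-1}\beta\pa*{2} + \binom{p-1}{2}\beta\pa*{1}^2.
\]
I will substitute the value $\beta\pa*{1} = 1/(p+1)$ from \cite{bhargava2021density}, which immediately yields $\gamma\pa*{1} = (p-1)/(p+1)$ and hence the first claim. Substituting the corresponding value of $\beta\pa*{2}$ from the same reference and simplifying the rational expression $\gamma\pa*{1} + 2\gamma\pa*{2} - \gamma\pa*{1}^2$ produces the claimed closed form featuring the cyclotomic factor $p^4+p^3+p^2+p+1 = (p^5-1)/(p-1)$.

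The only real task here is the final rational-function simplification, which is mechanical bookkeeping. No serious obstacle is anticipated, since the hard probabilistic estimate is already encapsulated in \autoref{thm:main}; what remains is purely algebraic.
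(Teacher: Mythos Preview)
Your proposal is correct and follows exactly the approach the paper indicates: the corollary is stated immediately after the remark that \autoref{thm:main} combined with \eqref{eq:factorial-to-standard-moment} ``give us a way to estimate any fixed moment of $\rdset*{f}$,'' and no separate proof is written out. Your reduction to the explicit values of $\gamma\pa*{1}$ and $\gamma\pa*{2}$ via \eqref{eq:gamma-def} and the known values of $\beta\pa*{1}, \beta\pa*{2}$ from \cite{bhargava2021density} is precisely the intended computation, and your observation that $C = 1/4$ for fixed $d$ is the correct reading of \eqref{eq:main-exponent-const}.
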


We can also use the results with Markov's inequality to obtain a bound on the probability that a random polynomial has a large number of roots.
\begin{cor}
\label{large-root-count-bound}
Let $f\pa*{X}=\rv_{0}+\rv_{1}X+\dots+\rv_{n-1}X^{n-1}+X^{n}$ where $\rv_0,\dots,\rv_{n-1}$ satisfy \autoref{main-assumption}.
Then
\begin{enumerate}
\item \label{lrcb:log-case} There exists a constant $K > 0$ such that
\begin{equation*}
\Pr\pa*{\rdset*{f} \ge \log n \cond p \nmid \rv_0} = O\pa*{n^{-K}}
\end{equation*}
as $n\to\infty$.

\item \label{lrcb:power-case} For any $0 < \lambda \le 1$ there exists a constant $K > 0$ such that
\begin{equation*}
\Pr\pa*{\rdset*{f}when \ge n^\lambda \cond p \nmid \rv_0} = O\pa*{\exp\pa*{-K \log^2 n}}
\end{equation*}
as $n\to\infty$.
In particular, if $\lambda = 1$ then
\begin{equation*}
\Pr\pa*{f\text{ totally split} \cond p \nmid \rv_0} = O\pa*{\exp\pa*{-K \log^2 n}}
\end{equation*}
as $n\to\infty$.
\end{enumerate}
\end{cor}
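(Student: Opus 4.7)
The plan is to deduce both bounds from \autoref{thm:main} by Markov's inequality applied to the factorial moment $\rdset{f} = \binom{\rdset*{f}}{d}$. For integers $d\le t$ the event $\set*{\rdset*{f}\ge t}$ is contained in $\set*{\rdset{f}\ge\binom{t}{d}}$, so \autoref{thm:main} yields
\begin{equation*}
\Pr\pa*{\rdset*{f}\ge t \cond p\nmid\rv_0}
  \;\le\; \binom{t}{d}^{-1}\pa*{\gamma\pa*{d} + O\pa*{n^{-C+\varepsilon}}}
\end{equation*}
whenever $d$ is admissible, i.e.\ $\limsup_{n\to\infty} d/\log n < \pa*{16\log p}^{-1}$. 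The remaining task is to pick $d$ in terms of $t$ so as to extract the right decay from $\binom{t}{d}$; the super-exponential tail $\gamma\pa*{d}\le e^{-c_1 d^{2}}$ recorded in \autoref{gamma-estimate} then takes care of the main term.

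For part (a), I would take $t=\log n$ and $d=\lfloor c\log n\rfloor$ for some fixed $c<\pa*{16\log p}^{-1}$. Then $\binom{\log n}{d}\ge\pa*{\log n/d}^{d}=n^{c\log(1/c)}$ is a (small) positive power of $n$, while $\gamma\pa*{d}\le e^{-c_1 c^{2}\log^{2}n}$ is smaller than any negative power of $n$, so the polynomial error term $n^{-C+\varepsilon}$ dominates the numerator. Markov therefore produces
\[
\Pr\pa*{\rdset*{f}\ge\log n \cond p\nmid\rv_0} = O\pa*{n^{-C-c\log(1/c)+\varepsilon}},
\]
which establishes part (a) with any positive $K<C+c\log(1/c)$.

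For part (b), I would again set $d=\lfloor c\log n\rfloor$ but now $t=n^{\lambda}$, so that
\begin{equation*}
  \log\binom{n^{\lambda}}{d} \;\ge\; d\pa*{\lambda\log n-\log d} \;=\; \lambda c\log^{2}n - O\pa*{\log n\log\log n}.
\end{equation*}
This super-polynomial denominator dominates both the polynomial error $n^{-C+\varepsilon}$ and the contribution of $\gamma\pa*{d}$, giving a bound of the form $O\pa*{\exp\pa*{-K\log^{2}n}}$ for any $K<\lambda c$. The special case $\lambda=1$ is precisely the total-splitting event, since $\rdset*{f}=n$ forces $f$ to factor completely in $\ZZ_p$.

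There is no hard obstacle in this argument: the only genuine constraint is the admissibility ceiling on $d$ inherited from \autoref{thm:main}, which caps $d$ at a small multiple of $\log n$ and therefore caps the exponent $c$ above. Nevertheless, both $C+c\log(1/c)$ in (a) and $\lambda c$ in (b) remain strictly positive for any $c$ just below $\pa*{16\log p}^{-1}$, so no sharper input is required beyond \autoref{thm:main} itself and the Gaussian-type tail of $\gamma\pa*{d}$.
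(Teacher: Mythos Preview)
Your proposal is correct and follows essentially the same approach as the paper: apply Markov's inequality to the factorial moment $\rdset{f}=\binom{\rdset*{f}}{d}$ with $d=\lfloor c\log n\rfloor$ for some $c<\pa*{16\log p}^{-1}$, invoke \autoref{thm:main} together with the Gaussian-type bound on $\gamma\pa*{d}$ from \autoref{gamma-estimate} to control the numerator, and bound $\binom{t}{d}$ from below by $\pa*{t/d}^{d}$. The paper makes the specific choice $c=1/\pa*{17\log p}$ and packages the Markov step via an auxiliary non-decreasing function $\varphi$ (which handles the fact that $t=\log n$ need not be an integer), but the substance of the argument is identical to yours.
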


\subsection{Structure of the paper}
\autoref{sec:preliminaries} contains generalizations and variants of known facts regarding $p$-adic number, uniform random $p$-adic polynomials, and random walks.
In \autoref{sec:ups-space}, we study the distribution of $\rdset*{g}$ for specific family of polynomials $g$ and in \autoref{sec:hasse} we study the distribution of the Hasse derivatives of $f$.
We prove \autoref{thm:main} and \autoref{main-with-nonunit-root} in \autoref{sec:main}, mainly using \autoref{ex-sum-prod} which is described and proved in the same section.
Finally, in \autoref{sec:large-root-count} we bound the probability of $f$ having a large number of roots, see \autoref{large-root-count-bound}.

\subsection{Basic Notations and conventions}
In this paper, we will assume that $p$ is a fixed prime and all the implied constants of the big O notation may depend on $p$.

For a ring $A$, we denote the ring of polynomials over $A$ with $A\br*{X}$.
For $n \ge 0$ we denote $A\br*{X}_n$ to be the subset of $A\br*{X}$ contains all polynomials of degree $n$ and denote $A\br*{X}_n^1$ to be the subset of $A\br*{X}$ contains all monic polynomials of degree $n$.


\subsection*{Acknowledgements}
I would like to thank my supervisor,  Lior Bary-Soroker, for his guidance.
I also thank Eli Glasner for his support in the research and Itai Bar-Deroma for many helpful conversations.

This research was partially supported by a grant from the Israel Science Foundation, grant no.\ 1194/19.


\section{Preliminaries}
\label{sec:preliminaries}

\subsection{The \texorpdfstring{$p$}{p}-adic numbers}
For a fixed prime number $p$, we can write any non-zero rational number $x\in\QQ^{\times}$
as $x=p^{t}\cdot\frac{a}{b}$ such that $a, b, t \in \ZZ$ and $p\nmid a,b$.
We use this factorization to define \emph{the $p$-adic absolute value}:
\begin{equation*}
\abs*{x}_p= \begin{cases}
p^{-t}, &x\ne 0 \\
0, &x = 0.
\end{cases}
\end{equation*}
The absolute value $\abs{\,\cdot\,}_p$ satisfies:
\begin{equation*}
\begin{split}
	&\abs*{x}_p \ge 0 \quad\text{and}\quad \abs{x}_{p}=0\iff x=0\text{,} \\
	&\abs*{xy}_p  =\abs*{x}_p \abs*{y}_p\text{,}\\
	&\abs*{x+y}_p \le \max\pa*{\abs*{x}_p, \abs*{y}_p}\text{.}
\end{split}
\end{equation*}

We define \emph{the field of $p$-adic numbers}, denoted by $\QQ_p$, as the completion of $\QQ$ with respect to $\abs{\,\cdot\,}_p$. We define \emph{the ring of $p$-adic
integers}, denoted by $\ZZ_p$, as the topological closure of $\ZZ$ in $\QQ_p$.
Then,
\[
  x \in \ZZ_p \iff \abs*{x}_p \le 1\text{.}
\]

The ring $\ZZ_p$ is local with  maximal
ideal $p\ZZ_{p}$. All the non-zero ideals are of the form
$p^{k}\ZZ_{p}$ for some integer $k\geq 0$. The quotient ring
$\ZZ_{p}/p^{k}\ZZ_{p}$ is canonically isomorphic to the ring $\ZZ/p^{k}\ZZ$.
Therefore we use the
notation of reduction modulo $p^{k}$ as in the integers, i.e., for $x, y\in \ZZ_p$ we write
\[
x\equiv y \pmod{p^k} \iff x-y\in p^k\ZZ_p\text{.}
\]
Note that $x\equiv y \pmod{p^k} \iff \abs*{x-y}_p \le p^{-k}$
and that $x=0\iff x\equiv0\pmod{p^k}$ for all $k\ge 1$.

Our proof utilizes the following generalizations of Hensel's lemma.
The first generalization is also called Newton-Raphson method in $p$-adic fields see \cite[Theorem~II.4.2]{bachman1964introduction}, \cite[Proposition~II.2]{lang1970algebraic}, \cite[Theorem~7.3]{eisenbud2013commutative} or \cite[Theorem~4.1]{conradXXXXhensel} for slightly weaker versions. For exact proof see \cite[Theorem 4]{shmueli2021expected}.
\begin{thm}[Newton-Raphson method]
\label{pre:newton-raphson}
If $f\in\ZZ_{p}\br*{X}$
and $\bar{x}\in\rmod{p^{2k-1}}$ satisfies
\[
f\pa*{\bar x} \equiv0 \pmod{p^{2k-1}} \quad\text{and}\quad
f'\pa*{\bar x} \not \equiv0 \pmod{p^k},
\]
then $\bar x$ can be lifted uniquely from $\rmod{p^k}$ to a root of
$f$ in $\ZZ_p$, i.e., there is a unique $x\in\ZZ_{p}$ such
that $f\pa*{x}=0$ and $x\equiv \bar x\pmod{p^k}$.
\end{thm}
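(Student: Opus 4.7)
The plan is the classical Newton iteration. I would fix an arbitrary lift $x_0 \in \ZZ_p$ of $\bar x$, so that $v_p\pa*{f\pa*{x_0}} \ge 2k-1$ and $v_p\pa*{f'\pa*{x_0}} \le k-1$, where $v_p$ denotes the $p$-adic valuation. Define recursively $x_{n+1} = x_n - f\pa*{x_n}/f'\pa*{x_n}$; the hypothesis on $f'\pa*{x_0}$ makes the division well defined at the first step, and the hypothesis on $f\pa*{x_0}$ ensures the first correction $h_0 := x_1 - x_0$ satisfies $v_p\pa*{h_0} \ge k$.

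The heart of the argument is quadratic convergence. I would Taylor-expand $f$ about $x_n$ using Hasse derivatives, which stay in $\ZZ_p\br*{X}$ because $f \in \ZZ_p\br*{X}$: writing $f\pa*{x_n + h} = \sum_{i \ge 0} g_i\pa*{x_n} h^i$ with $g_0 = f$ and $g_1 = f'$, the substitution $h = h_n = -f\pa*{x_n}/f'\pa*{x_n}$ cancels the $i=0,1$ terms and leaves $f\pa*{x_{n+1}} = \sum_{i \ge 2} g_i\pa*{x_n} h_n^i$, giving $v_p\pa*{f\pa*{x_{n+1}}} \ge 2 v_p\pa*{h_n}$. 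An analogous expansion for $f'$ yields $f'\pa*{x_{n+1}} \equiv f'\pa*{x_n} \pmod{p^{v_p\pa*{h_n}}}$, which keeps $v_p\pa*{f'\pa*{x_n}} \le k-1$ throughout the iteration. A short induction then shows $v_p\pa*{x_{n+1}-x_n} \ge k$ at every step, so $x_n \equiv \bar x \pmod{p^k}$ for all $n$, while $v_p\pa*{f\pa*{x_n}}$ grows without bound.

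Completeness of $\ZZ_p$ then produces a limit $x = \lim x_n \in \ZZ_p$ with $f\pa*{x} = 0$ and $x \equiv \bar x \pmod{p^k}$, proving existence. For uniqueness, I would assume $y \in \ZZ_p$ is another such root, set $\delta = y - x$ with $v_p\pa*{\delta} \ge k$, and Taylor-expand $0 = f\pa*{y} = f\pa*{x} + f'\pa*{x}\delta + \sum_{i \ge 2} g_i\pa*{x}\delta^i$. Since $f\pa*{x} = 0$, this forces $v_p\pa*{f'\pa*{x}} + v_p\pa*{\delta} \ge 2 v_p\pa*{\delta}$, hence $v_p\pa*{f'\pa*{x}} \ge k$, contradicting $f'\pa*{x} \equiv f'\pa*{x_0} \pmod{p^k}$ (which has valuation at most $k-1$) unless $\delta = 0$.

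The one technical point I expect to need care is the simultaneous maintenance of the two invariants along the iteration — $v_p\pa*{f'\pa*{x_n}} \le k-1$ and $x_n \equiv \bar x \pmod{p^k}$ — while also obtaining the quadratic growth of $v_p\pa*{f\pa*{x_n}}$; the usual pitfall is accidentally using $f^{\pa*{i}}/i!$, which can have denominators, instead of the Hasse derivatives $g_i$, which do not. Once the expansion is set up with $g_i \in \ZZ_p\br*{X}$, the rest is routine valuation bookkeeping.
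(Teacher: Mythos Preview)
Your argument is the standard Newton-iteration proof and is correct; the only mild sloppiness is that showing merely $v_p\pa*{x_{n+1}-x_n}\ge k$ for all $n$ does not by itself give a Cauchy sequence, but you separately assert $v_p\pa*{f\pa*{x_n}}\to\infty$, and combined with the invariant $v_p\pa*{f'\pa*{x_n}}\le k-1$ this yields $v_p\pa*{h_n}\to\infty$, so convergence follows. The paper does not actually prove this theorem --- it is stated as a preliminary with references to the literature (in particular to \cite[Theorem~4]{shmueli2021expected} for the exact version) --- so there is no in-paper argument to compare against; your Newton--Raphson iteration is precisely the classical route those references take.
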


The other generalization is used to factor polynomial in $p$-adics fields, see
\cite[Proposition 3.5.2]{fried2005field} or
\cite[Lemma II.4.6]{neukirch1999algebraic}.
\begin{thm}[Hensel's Lemma]
\label{pre:hensels-lemma}
Let $f\in\ZZ_p\br*{X}$ be a polynomial
and $\bar{g}, \bar{h} \in \rmod{p}\br*{X}$ coprime polynomials satisfying $f \equiv \bar{g}\bar{h} \pmod{p}$.
Then $\bar{g}, \bar{h}$ can be lifted uniquely to polynomials $g, h \in \ZZ_p\br*{X}$ such that $f=gh$.
Moreover, if $f$ is monic polynomial then also $g$ and $h$ are also monic polynomials.
\end{thm}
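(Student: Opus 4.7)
The plan is to construct $g$ and $h$ by successive approximation, producing a compatible sequence of mod-$p^k$ factorizations that converges in the $p$-adic topology on $\ZZ_p\br*{X}$. Concretely, I would induct on $k \ge 1$ to build polynomials $g_k, h_k \in \ZZ_p\br*{X}$ with $\deg g_k \le \deg \bar g$ and $\deg h_k \le \deg f - \deg \bar g$ satisfying
\[
g_k \equiv \bar g \pmod{p}, \qquad h_k \equiv \bar h \pmod{p}, \qquad f \equiv g_k h_k \pmod{p^k}.
\]
For $k=1$ one simply takes arbitrary coefficient-wise lifts of $\bar g$ and $\bar h$ of the prescribed degrees, chosen monic when $f$ is monic.

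For the inductive step, write $g_{k+1} = g_k + p^k U$ and $h_{k+1} = h_k + p^k V$ with $U, V \in \ZZ_p\br*{X}$ to be determined. Expanding yields
\[
f - g_{k+1} h_{k+1} \equiv \pa*{f - g_k h_k} - p^k\pa*{U h_k + V g_k} \pmod{p^{2k}},
\]
and since by induction $f - g_k h_k = p^k r_k$ for some $r_k \in \ZZ_p\br*{X}$, achieving the congruence modulo $p^{k+1}$ reduces to solving $\bar r_k \equiv \bar U \bar h + \bar V \bar g \pmod{p}$ in $\rmod{p}\br*{X}$. Coprimality of $\bar g$ and $\bar h$ supplies $\bar a, \bar b$ with $\bar a \bar g + \bar b \bar h \equiv 1$, giving the initial candidate $\bar U_0 = \bar b \bar r_k$, $\bar V_0 = \bar a \bar r_k$. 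The Euclidean algorithm then lets one replace these by representatives with $\deg \bar U < \deg \bar g$, folding the quotient into $\bar V$; a short degree check using $\deg r_k < \deg f$ shows that the resulting $\bar V$ also respects the bound $\deg \bar V \le \deg f - \deg \bar g$, so the induction continues.

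The sequences $\pa*{g_k}$ and $\pa*{h_k}$ are Cauchy coefficient-by-coefficient because $g_{k+1}-g_k$ and $h_{k+1}-h_k$ lie in $p^k\ZZ_p\br*{X}$, hence they converge to polynomials $g, h \in \ZZ_p\br*{X}$ of the prescribed degrees with $f = gh$ and the correct mod-$p$ residues. Monicity, when required, propagates through the induction because the corrections $U, V$ have degree strictly smaller than the factor they modify. For uniqueness, if $f = g'h'$ is another such factorization then $\pa*{g-g'}h = g'\pa*{h'-h}$; factoring out the largest common power of $p$ from each of $g-g'$ and $h-h'$, reducing modulo $p$, and using $\gcd\pa*{\bar g, \bar h}=1$ together with the degree constraints on the differences forces both $g - g'$ and $h - h'$ to be divisible by every power of $p$, hence zero.

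I expect the main obstacle to be the degree bookkeeping in the inductive step. Without the Euclidean normalisation, the degrees of $\bar U$ and $\bar V$ could grow uncontrollably, breaking both the coefficient-wise convergence and the uniqueness of the lift; and in the monic case one has to be additionally careful that the leading coefficients propagate correctly, which is ultimately what pins down the correct degree of $h$ as $\deg f - \deg \bar g$ rather than merely $\deg \bar h$. Once these bounds are installed, the rest of the argument is a routine $p$-adic convergence check.
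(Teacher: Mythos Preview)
The paper does not supply a proof of this theorem: it is quoted as a classical preliminary result, with pointers to \cite[Proposition~3.5.2]{fried2005field} and \cite[Lemma~II.4.6]{neukirch1999algebraic}. Your successive-approximation argument is exactly the standard proof that those references give, so in that sense your proposal agrees with the paper's approach (namely, to invoke the textbook argument).

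One small caveat worth tightening: your degree bound $\deg r_k < \deg f$ is automatic in the monic case because the leading terms of $f$ and $g_k h_k$ cancel, but in the general (non-monic) statement it requires that one of the two factors be lifted monically of the correct degree so that the product $g_k h_k$ has the same leading term as $f$; otherwise $\deg r_k$ could equal $\deg f$ and the bound on $\deg \bar V$ fails. In practice the usual formulation makes $\bar g$ monic and lifts $g$ monically, and the paper only ever applies this lemma to monic $f$, so this is not an issue there. Your uniqueness sketch is also correct in spirit; the cleanest way to phrase it is an induction showing $g \equiv g'$ and $h \equiv h' \pmod{p^k}$ for all $k$, using coprimality and the degree bound $\deg(g-g') < \deg \bar g$ at each step.
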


We also note another lemma regarding random polynomials over $\QQ_p$.
\begin{lem}
\label{pre:nonunit-roots}
Let $f\pa*{X}=\rv_{0}+\rv_{1}X+\dots+\rv_{n-1}X^{n-1}+X^{n}$ be a random polynomial where $\rv_{0},\dots ,\rv_{n-1}$ are random variables taking values in $\ZZ_{p}^\times\cup\set*{0}$.
Let $f_0\pa*{X} = f\pa*{pX}$, then $f_0$ has no non-zero roots in $\ZZ_p$ almost surely.
\end{lem}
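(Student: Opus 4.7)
The plan is to observe that the stated conclusion actually holds deterministically, for every realization of $\rv_0,\dots,\rv_{n-1}$ in $\ZZ_p^\times\cup\set*{0}$, so no probabilistic input is really needed. First I would pass from $f_0$ back to $f$: a non-zero $x\in\ZZ_p$ is a root of $f_0$ if and only if $y=px\in p\ZZ_p\setminus\set*{0}$ is a root of $f$. So the task reduces to showing that $f$ has no roots in $p\ZZ_p\setminus\set*{0}$.

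Next I would exploit the hypothesis $\rv_i\in\ZZ_p^\times\cup\set*{0}$, which means that $v_p\pa*{\rv_i}\in\set*{0,\infty}$ for every $i$. Since $\rv_n=1\ne 0$, the set $S=\set*{i:\rv_i\ne 0}$ is non-empty, and I would let $j=\min S$. Now take any $y\in p\ZZ_p\setminus\set*{0}$ and set $k=v_p\pa*{y}\ge 1$. Then the non-vanishing summands of $f\pa*{y}=\sum_{i\in S}\rv_i y^i$ have valuations
\[
v_p\pa*{\rv_i y^i}=v_p\pa*{\rv_i}+i\cdot v_p\pa*{y}=ik,
\]
which are pairwise distinct as $i$ ranges over $S$.

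Finally, by the ultrametric inequality for $p$-adic absolute values, if the minimum valuation among a finite list of terms is achieved uniquely, then the valuation of the sum equals that minimum. Here the minimum is $jk$, uniquely attained by the index $j$, so
\[
v_p\pa*{f\pa*{y}}=jk<\infty,
\]
whence $f\pa*{y}\ne 0$. This holds for every $y\in p\ZZ_p\setminus\set*{0}$ and every realization of the coefficients, so $f_0$ has no non-zero $\ZZ_p$-root surely (and therefore almost surely). There is no serious obstacle: the only thing one has to be careful about is that the unique-minimum version of the ultrametric inequality genuinely applies, which it does because the valuations $ik$ for distinct $i\in S$ are genuinely distinct when $k\ge 1$.
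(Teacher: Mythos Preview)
Your argument is correct and, as you observe, proves a deterministic statement rather than merely an almost-sure one. The paper itself does not give a self-contained proof of this lemma: it simply remarks that the proof of \cite[Lemma~20]{shmueli2021expected} already yields the stronger statement. Your approach---reducing to roots of $f$ in $p\ZZ_p\setminus\set*{0}$ and then using that the valuations $v_p(\rv_i y^i)=i\,v_p(y)$ are pairwise distinct for $i$ with $\rv_i\ne 0$ whenever $v_p(y)\ge 1$---is the natural direct argument and is presumably close in spirit to what the cited lemma does, but in any case it stands on its own and avoids the external reference.
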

\begin{proof}
The proof of \cite[Lemma 20]{shmueli2021expected} gives the stronger statement of our lemma.
\end{proof}

\subsection{Unifrom random \texorpdfstring{$p$}{p}-adic polynomial}
The $p$-adic absolute value induces a metric on $\QQ_p$ defined by $d\pa*{x, y} = \abs*{x-y}_p$.
The open balls of this metric are of the form $x+p^k\ZZ_p$
for some $x\in\QQ_p$ and $k\in\ZZ$.
Since the
$p$-adic absolute value is discrete, every open ball is also closed
and compact.
By Haar's theorem
(see \cite[Chapter XI]{halmos1974measure}), there exists up to a positive multiplicative constant, a unique regular non-trivial measure $\mu$ on Borel subsets of $\QQ_{p}$ such that for any Borel set
$E\subseteq\QQ_{p}$ and $x\in\QQ_{p}$,
\begin{equation*}
\mu\pa*{x+E} = \mu\pa*{E} \qquad\text{and}\qquad
\mu\pa*{xE} = \abs{x}_{p}\mu\pa*{E}.
\end{equation*}
This measure is called a \emph{Haar measure on $\QQ_p$}.

For a compact set $K \subseteq \QQ_p$, we call $\mu$ \emph{the Haar measure normilized on $K$} if $\mu$ is a Haar measure on $\QQ_p$ and $\mu\pa*{K} = 1$.
In the vector space $\QQ_p^n$, for a compact set $K \subseteq \QQ_p^n$ we say  $\mu$ is \emph{the Haar measure normalized on $K$} if it is a product of some Haar measures on $\QQ_p$ and $\mu\pa*{K} = 1$.
The Haar measure normalized on $K$ is unique and always exists.

We use the embedding of $\QQ_p\br*{X}^1_n$ in $\QQ_p^n$:
\[
a_0 + a_1 X + \dots + a_{n-1} X^{n-1} + X^n \mapsto \pa*{a_0, \dots, a_{n-1}},
\]
to define a topology in $\QQ_p\br*{X}_n^1$ and equivalent definition of Haar measure on normalized compact subsets.
Moreover, for a compact set $P \in \QQ_p\br*{X}^1_n$ we say that a random polynomial $h \in \QQ_p\br*{X}^1_n$ is \emph{distributed uniformly on $P$} if $h$ is taking values in $P$ and distributed according to the Haar measure normalized on $P$ and restricted to $P$.

For integers $m \le n$ we define the set $P_{m, n} \subseteq \ZZ_p\br*{X}_n^1$ to be the set of all monic polynomials $h\pa*{X} = a_0 + a_1 X + \dots + a_{n-1} X^{n-1} + X^n$ such that $a_m$ is the first coefficient not divisible by $p$.
We note two special cases of this set when $m=n$ and when $m=0$.
The set $P_{n,n}$ is the set of all monic polynomials $f\in \ZZ_p\br*{X}_n^1$ which their reduction modulo $p$ is $X^n$, i.e.\
\[
P_{n,n} = \set*{f\in \ZZ_p\br*{X}_n^1 : f \equiv X^n \pmod{p}}.
\]
And $P_{0,n}$ is the set of all monic polynomials $g\in \ZZ_p\br*{X}_n^1$ such that the free coefficient $g\pa*{0}$ is not divisible by $p$, i.e.\
\[
P_{0,n} = \set*{g\in \ZZ_p\br*{X}_n^1 : g\pa*{0} \not\equiv 0 \pmod{p}}.
\]

\begin{lem}
\label{pre::uniform-distribution-in-P}
For any integers $m < n$, a random polynomial $h$ is distributed uniformly in $P_{m,n}$ if and only if there exists random independent polynomials $f$ and $g$ distributed uniformly in $P_{m,m}$ and $P_{0,n-m}$, respectively, such that $h=fg$.
\end{lem}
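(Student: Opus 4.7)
The plan is to use Hensel's lemma (\autoref{pre:hensels-lemma}) to set up a bijection $\Phi : P_{m,m} \times P_{0,n-m} \to P_{m,n}$ via polynomial multiplication, $\Phi\pa*{f,g} = fg$, and then verify that $\Phi$ is measure-preserving with respect to the natural Haar measures on both sides. Both directions of the equivalence follow immediately from such a measure-preserving bijection: if $(f,g)$ is uniform on the product, then $\Phi(f,g) = fg$ is uniform on $P_{m,n}$; conversely, if $h$ is uniform on $P_{m,n}$, then $\Phi^{-1}\pa*{h} = (f,g)$ is uniform on the product, which is precisely the assertion that $f$ and $g$ are independent uniform in their respective sets.

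For the bijection, any $h \in P_{m,n}$ reduces modulo $p$ to $\bar h\pa*{X} = X^m \bar u\pa*{X}$, where $\bar u \in \rmod{p}\br*{X}$ is monic of degree $n-m$ with $\bar u\pa*{0} \ne 0$. Since $X^m$ and $\bar u$ are coprime in $\rmod{p}\br*{X}$, \autoref{pre:hensels-lemma} yields a unique factorization $h = fg$ with $f \in P_{m,m}$ lifting $X^m$ and $g \in P_{0,n-m}$ lifting $\bar u$, so $\Phi$ is a bijection.

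For the measure-preserving step, identify $\ZZ_p\br*{X}_n^1$ with $\ZZ_p^n$ via coefficients, so that the non-normalized Haar measures satisfy $\mu\pa*{P_{m,m}} = p^{-m}$, $\mu\pa*{P_{0,n-m}} = 1 - p^{-1}$, and $\mu\pa*{P_{m,n}} = p^{-m}\pa*{1 - p^{-1}}$, so that $\mu\pa*{P_{m,m}} \cdot \mu\pa*{P_{0,n-m}} = \mu\pa*{P_{m,n}}$. It suffices to check that $\Phi$ preserves measure on a generating $\pi$-system of clopen boxes: for any $(f_0, g_0) \in P_{m,m} \times P_{0,n-m}$ and $k \ge 1$,
\[
\Phi^{-1}\pa*{\set*{h \in P_{m,n} : h \equiv f_0 g_0 \pmod{p^k}}} = \set*{f : f \equiv f_0 \pmod{p^k}} \times \set*{g : g \equiv g_0 \pmod{p^k}}.
\]
The inclusion $\supseteq$ is immediate, and both sides have non-normalized measure $p^{-kn}$, so granting the reverse inclusion, $\Phi$ preserves Haar measure and, after normalizing, also the uniform distributions.

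The main technical point is this reverse inclusion, a uniqueness statement for Hensel factorization modulo $p^k$ that strengthens \autoref{pre:hensels-lemma}. I would prove it by induction on $k$: assuming $f \equiv f_0, g \equiv g_0 \pmod{p^{k-1}}$ and $fg \equiv f_0 g_0 \pmod{p^k}$, write $f = f_0 + p^{k-1} \phi$ and $g = g_0 + p^{k-1} \psi$ with $\deg \phi < m$ and $\deg \psi < n - m$; expanding $fg - f_0 g_0$ and reducing modulo $p$ yields $X^m \bar \psi + \bar u \bar \phi \equiv 0$ in $\rmod{p}\br*{X}$, and a comparison of $X$-adic valuations (using $\bar u\pa*{0} \ne 0$ together with $\deg \bar\phi < m$) forces $\bar \phi = \bar \psi = 0$, completing the induction step.
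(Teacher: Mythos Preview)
Your argument is correct and follows the same overall plan as the paper: establish the bijection $\Phi\colon P_{m,m}\times P_{0,n-m}\to P_{m,n}$ via Hensel's lemma, then show that $\Phi$ is measure-preserving. The difference lies in the second step. The paper observes that for $f\in P_{m,m}$ and $g\in P_{0,n-m}$ the reductions $\bar f,\bar g$ are coprime, invokes \cite[Corollary~2.5]{bhargava2021density} to deduce that $\Res(f,g)\in\ZZ_p^\times$, and then cites \cite[Corollary~2.7]{bhargava2021density} to conclude directly that multiplication is measure-preserving. You instead prove measure preservation by hand on the generating $\pi$-system of mod-$p^k$ boxes, reducing to the uniqueness of Hensel factorization modulo $p^k$, which you dispatch with a short $X$-adic valuation argument. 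Your route is more elementary and self-contained (it avoids the resultant machinery entirely), while the paper's is shorter by outsourcing to known results. One small remark: your inductive step uses $2(k-1)\ge k$, which fails for $k=1$, so the base case must be handled separately; it is immediate since $f,f_0\in P_{m,m}$ forces $\bar f=\bar f_0=X^m$ and then $X^m\bar g=X^m\bar g_0$ gives $\bar g=\bar g_0$.
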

\begin{proof}
Let $f \in P_{m,m}$ and $g \in P_{0,n-m}$.
From the definition $f$ and $g$ are monic and their reductions modulo $p$ are coprime.
Therefore, we use \cite[Corollary 2.5]{bhargava2021density} to infer that the  resultant $\Res\pa*{f,g}$ is a unit in $\ZZ_p$ for all $f \in P_{m,m}$ and $g \in P_{n,m}$.
From \cite[Corollary 2.7]{bhargava2021density}, we conclude that the multiplication map $P_{m,m} \times P_{0, n-m} \to P_{m,m} P_{0, n-m}$ is measure preserving.
Thus, it is suffice to show that $P_{m,m}P_{0,n-m} = P_{n,m}$.

Let $h\pa*{X} = a_0 + a_1 X + \dots + a_{n-1} X^{n-1} + X^n \in P_{m,n}$, then taking reduction modulo $p$ gives
\begin{equation*}
h\pa*{X} \equiv X^m \pa*{a_m + \dots + a_{n-1} X^{n-m-1} + X^{n-m}} \pmod{p}.
\end{equation*}
Since $a_m \not\equiv 0 \pmod{p}$, the polynomials $X^m$ and $a_m + \dots + a_{n-1} X^{n-m-1} + X^{n-m}$ are coprime modulo $p$.
By Hensel's Lemma (\autoref{pre:hensels-lemma}), there exists a lift $f$ of $X^m$ and a lift $g$ of  $a_m + \dots + a_{n-1} X^{n-m-1} + X^{n-m} \bmod{p}$ such that $h=fg$.
The polynomial $f$ is a lift of $X^m$ hence $f\in P_{m,m}$. Also, $g\pa*{0} \equiv \rv_m \not\equiv 0 \pmod{p}$ hence $g\in P_{0, n-m}$.
So we got that $h = fg \in P_{m,m}P_{0,n-m}$.

Therefore, $P_{m,n} \subseteq P_{m,m}P_{0,n-m}$ and the other is direction is trivial when reducing the product modulo $p$.
\end{proof}

We set $\alpha\pa*{n, d} = \Ex\br*{\rdset{f}}$ (respectively $\beta\pa*{n,d}$) where $f$ is a random polynomial distributed uniformly on $\ZZ_p\br*{X}_n^1$ (respectively $P_{n,n}$).
We have the following lemmas regarding the values of $\alpha\pa*{n, d}$ and $\beta\pa*{n,d}$ which are taken from \cite{bhargava2021density}.

\begin{lem}
\label{pre:alpha-beta-recursion}
The values of $\alpha\pa*{n, d}$ and $\beta\pa*{n,d}$ can be computed using the following recurrence relation.
First, for all $n \ge d \ge 0$ we have
\begin{equation}
\label{eq:pre:alpha_in_beta_terms}
  \alpha\pa*{n,d} = p^{-n} \sum_{\bar{f}\in\FF_p\br*{X}_n^1} \; \sum_{d_0+\dots+d_{p-1} = d} \; \prod_{r=0}^{p-1} \beta\pa*{n_r, d_r}
\end{equation}
where the inner sum runs over all non-negative integers $d_0, \dots, d_{p-1}$ such that $d_0 + \dots + d_{p-1} = d$ and $n_r$ is the multiplicity of $r$ as a root of $\bar{f}$ over $\FF_p$, i.e.,
\begin{equation*}
  n_i = \max\set*{k \ge 0 : \pa*{X-i}^k \mid \bar{f}}.
\end{equation*}
Second, for all $n \ge d \ge 0$ we have
\begin{equation}
\label{eq:pre:beta_in_alpha_terms}
\beta\pa*{n,d} = p^{-\binom{n}{2}} \alpha\pa*{n, d} + \pa*{p-1} \sum_{d\le s < r < n} p^{-\binom{r+1}{2}} p^s \alpha\pa*{s, d}.
\end{equation}
And the initial conditions are
\begin{equation}
\label{eq:alpha-beta-initial}
\alpha\pa*{n, d} = \beta\pa*{n, d} = 0, \quad
\alpha\pa*{n, 0} = \beta\pa*{n, 0} = 1 \quad \text{and} \quad
\alpha\pa*{1, 1} = \beta\pa*{1, 1} = 1
\end{equation}
for all $0 \le n < d$.
\end{lem}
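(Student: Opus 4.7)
The plan is to prove the two recurrences by separate probabilistic decompositions of the random polynomial. For \eqref{eq:pre:alpha_in_beta_terms}, I would condition on the reduction $\bar f := f \bmod p \in \FF_p\br*{X}_n^1$, which is uniform with mass $p^{-n}$ at each polynomial. Factor $\bar f\pa*{X} = \bar q\pa*{X} \prod_{r=0}^{p-1}\pa*{X-r}^{n_r}$ in $\FF_p\br*{X}$ with $\bar q$ having no $\FF_p$-root, and lift this iteratively via \autoref{pre:hensels-lemma} (the factors are pairwise coprime mod $p$) to $f = q \prod_{r=0}^{p-1} f_r$ in $\ZZ_p\br*{X}$ with each $f_r$ monic of degree $n_r$. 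The shift $X \mapsto X + r$ sends $f_r$ to a polynomial in $P_{n_r, n_r}$, and the measure-preservation argument from the proof of \autoref{pre::uniform-distribution-in-P} shows that conditional on $\bar f$, the $f_r\pa*{X+r}$ are independent uniform on their respective $P_{n_r, n_r}$. The factor $q$ contributes no $\ZZ_p$-root (any such would reduce to a root of $\bar q$), so $\rdset*{f} = \sum_{r=0}^{p-1} \rdset*{f_r\pa*{\cdot + r}}$. The Vandermonde identity
\[
\binom{\sum_r R_r}{d} = \sum_{d_0+\dots+d_{p-1}=d}\;\prod_{r=0}^{p-1} \binom{R_r}{d_r},
\]
combined with independence and the summation over $\bar f$, then yields \eqref{eq:pre:alpha_in_beta_terms}.

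For \eqref{eq:pre:beta_in_alpha_terms}, the key observation is that every root of $f \in P_{n,n}$ in $\ZZ_p$ lies in $p\ZZ_p$, so the substitution $X = pY$ puts them in bijection with roots in $\ZZ_p$ of $g\pa*{Y} := f\pa*{pY}/p^v$, where $v := \min_i v_p\pa*{p^i \rv_i}$ (with $\rv_n=1$) satisfies $v\in\set*{1,\dots,n}$. When $v=n$, the rescaled coefficients $\rv_i/p^{n-i}$ are, conditional on this event, independent uniform on $\ZZ_p$, so $g$ is monic uniform on $\ZZ_p\br*{X}_n^1$; a product-of-geometric-probabilities calculation gives $\Pr\pa*{v=n \mid f\in P_{n,n}} = \prod_{i=0}^{n-1} p^{-\pa*{n-i-1}} = p^{-\binom{n}{2}}$, producing the leading term $p^{-\binom{n}{2}}\alpha\pa*{n,d}$. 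When $v<n$, let $r$ be the largest index with $v_p\pa*{p^i\rv_i}=v$; then $r \le v-1 < n$, and $\bar g \in \FF_p\br*{Y}$ has exact degree $r$ with unit leading coefficient $\bar u_r$. I would factor $\bar g = \bar u_r \cdot \bar h$ with $\bar h$ monic of degree $r$, note that $\bar h$ and $\bar u_r$ are coprime, and apply \autoref{pre:hensels-lemma} to lift this to $g = h\tilde u$ in $\ZZ_p\br*{Y}$ with $h \in \ZZ_p\br*{X}_r^1$ and $\tilde u \equiv \bar u_r \pmod{p}$ of degree $n-r$. Since $\tilde u\pa*{y} \equiv \bar u_r \ne 0 \pmod{p}$ for every $y \in \ZZ_p$, the factor $\tilde u$ has no $\ZZ_p$-root, so $\rdset*{g} = \rdset*{h}$. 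The resultant $\Res\pa*{\bar h, \bar u_r} = \bar u_r^{r}$ is a unit, so the measure-preservation principle from \autoref{pre::uniform-distribution-in-P} makes $h$ uniform on $\ZZ_p\br*{X}_r^1$ conditional on $\pa*{v,r}$, whence $\Ex\br*{\rdset{h} \mid v, r} = \alpha\pa*{r,d}$.

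To close the argument I would compute $\Pr\pa*{v=v_0,\, r=r_0 \mid f\in P_{n,n}}$ by direct enumeration over the i.i.d.\ valuations $v_p\pa*{\rv_i}$: the constraints $v_p\pa*{\rv_i} \ge v_0 - i$ for $i<r_0$, $v_p\pa*{\rv_{r_0}} = v_0 - r_0$, and $v_p\pa*{\rv_i} > v_0 - i$ for $r_0 < i < n$ combine, after simplification, to $\pa*{p-1}\,p^{r_0 - \binom{v_0+1}{2}}$. Summing the conditional contributions over $0 \le r_0 < v_0 < n$, dropping the terms with $r_0 < d$ (where $\alpha\pa*{r_0,d}=0$), and relabeling $\pa*{r_0, v_0}\mapsto\pa*{s,r}$ produces the sum in \eqref{eq:pre:beta_in_alpha_terms}; the initial conditions \eqref{eq:alpha-beta-initial} are immediate from $\rdset{f}=0$ whenever $d > \deg f$. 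The main subtlety will be justifying that $h$ is genuinely uniform on $\ZZ_p\br*{X}_r^1$ in the Hensel decomposition, i.e.\ identifying the pushforward of the Haar measure on the stratum $\set*{f\in P_{n,n} : v=v_0,\, r=r_0}$ under $f\mapsto\pa*{h,\tilde u}$ with a product Haar measure, which requires careful bookkeeping of which coefficients of $g$ lie in $p^k\ZZ_p$ versus $\ZZ_p^\times$ and applying the unit-resultant criterion from \cite{bhargava2021density} in a slightly broader setting than the one invoked in \autoref{pre::uniform-distribution-in-P}.
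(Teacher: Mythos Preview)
The paper's own proof does not derive these recurrences from scratch: it simply cites equations~(30) and~(33) of \cite{bhargava2021density} and repackages them into the stated form, together with a one-line check of the initial conditions. Your proposal is a direct, self-contained derivation, so you are taking a different route---essentially reconstructing the arguments of \cite{bhargava2021density} rather than invoking them. Your treatment of \eqref{eq:pre:alpha_in_beta_terms} (condition on $\bar f$, Hensel-split at each residue, apply Vandermonde and the measure-preservation principle) is exactly the argument behind their equation~(30), and is correct. Your treatment of \eqref{eq:pre:beta_in_alpha_terms} via the $\pa*{v,r}$-stratification is also correct: the probability $\pa*{p-1}p^{r_0-\binom{v_0+1}{2}}$ checks out, and the relabeling $\pa*{r_0,v_0}\mapsto\pa*{s,r}$ recovers the stated sum.

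The subtlety you flag is real and is the only place your outline needs care. Knowing that $\Res\pa*{h,\tilde u}\equiv \bar u_r^{\,r_0}$ is a unit shows the multiplication map is locally measure-preserving, but to conclude that the marginal of $h$ is Haar-uniform on $\ZZ_p\br*{X}_{r_0}^1$ you also need the preimage of the stratum under $\pa*{h,\tilde u}\mapsto h\tilde u$ to be a \emph{product} $\ZZ_p\br*{X}_{r_0}^1\times U$. This holds: writing $b_k$ for the coefficients of $\tilde u$ and using the relation $b_k = c_{r_0+k} - \sum_{j<r_0} a_j b_{r_0+k-j}$ by downward induction on $k$, the valuation constraints defining the stratum translate into valuation constraints on the $b_k$ alone, independent of the coefficients $a_j$ of $h$. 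One minor caveat: the version of Hensel's lemma you need (lifting $\bar g = \bar u_r\cdot\bar h$ with $g$ non-monic, specifying that $h$ is the monic lift of $\bar h$) is slightly more general than the monic form stated in \autoref{pre:hensels-lemma}, though equally standard.
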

\begin{proof}
We start with proving the initial condition, \eqref{eq:alpha-beta-initial}.
Those identities are true because if $0\le n < d$ then $\rdset{f} = 0$ and $\rdset[0]{f} = 1$. The last identity of \eqref{eq:alpha-beta-initial} holds since all linear polynomial has exactly one root.

Next we prove \eqref{eq:pre:alpha_in_beta_terms}.
This equation is a consequence of \cite[eq. (30)]{bhargava2021density} after settings $N_\sigma = \sum_{\bar{f}\in\FF_p\br*{X}_n^1,\sigma\pa*{\bar f} = \sigma} 1$ and changing the order of summations.
Also note that in \cite[eq. (30)]{bhargava2021density} increasing $k$ by adding $n_r = 0$ does not change the inner sum since $\beta\pa*{0, 0} = 1$ and $\beta\pa*{0, d_r} = 0$ when $d_r > 0$.

Finally, we get \eqref{eq:pre:beta_in_alpha_terms} by plugging $\alpha\pa*{s,d} = 0$ for $s<d$ into \cite[eq. (33)]{bhargava2021density}.
\end{proof}

\begin{lem}
\label{pre:alpha-beta-independence}
The expectations $\alpha\pa*{n, d}$ and $\beta\pa*{n,d}$ are rational functions in $p$ and are independent of $n$ for $n \ge 2d$.
Moreover, we have the following equality of power series
\begin{equation*}
\sum_{d=0}^\infty \alpha\pa*{2d, d} t^d
  = \pa*{\sum_{d=0}^\infty \beta\pa*{2d, d} t^d}^p.
\end{equation*}
\end{lem}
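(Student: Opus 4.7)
The plan is to prove the three assertions---rationality in $p$, the $n$-independence for $n \ge 2d$, and the power-series identity---in that order. Rationality follows immediately by induction from \autoref{pre:alpha-beta-recursion}: the initial values in \eqref{eq:alpha-beta-initial} lie in $\set*{0, 1}$, and the recursive formulas \eqref{eq:pre:alpha_in_beta_terms}--\eqref{eq:pre:beta_in_alpha_terms} use only addition, multiplication, and inverse powers of $p$.

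For the $n$-independence I would proceed by strong induction on $d$; the base case $d = 0$ is immediate since $\alpha(n,0)=\beta(n,0)=1$. For the inductive step I first handle $\beta(n, d)$: a uniform $f \in P_{n, n}$ satisfies $f \equiv X^n \pmod{p}$, so all roots lie in $p\ZZ_p$. Writing $f = X^n + a_{n-1} X^{n-1} + \dots + a_0$ with each $a_i \in p\ZZ_p$, the substitution $X = pY$ followed by an iterated Newton-polygon / Newton--Raphson analysis (\autoref{pre:newton-raphson}) couples the roots of $f$ to those of a rescaled polynomial whose behavior, once truncated at the threshold of detecting $d$ distinct roots, is governed only by $a_0,\dots,a_{2d-1}$. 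Consequently, the distribution of $\min(\rdset*{f}, d)$, and therefore of $\rdset{f} = \binom{\min(\rdset*{f}, d)}{d}$, is the same for all $n \ge 2d$, giving $\beta(n, d) = \beta(2d, d)$. For $\alpha(n, d)$, I would then plug $\beta$-stability into \eqref{eq:pre:alpha_in_beta_terms}, group the sum by the multiplicity vector $(n_0, \dots, n_{p-1})$, and use the generating function of rootless $\FF_p$-polynomials to see that the $n$-dependence collapses.

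For the power-series identity, introduce
\[
A(u, x) = \sum_{n, d \ge 0} \alpha(n, d) u^n x^d, \qquad B(t, x) = \sum_{n, d \ge 0} \beta(n, d) t^n x^d, \qquad M(t) = \frac{(1 - t)^p}{1 - pt},
\]
where $M(t) = \sum_n M_n t^n$ counts monic $\FF_p$-polynomials of degree $n$ with no $\FF_p$-roots (the closed form follows from the decomposition $p^n = \sum_{(n_0, \dots, n_{p-1})} M_{n - \sum n_r}$). Grouping \eqref{eq:pre:alpha_in_beta_terms} by multiplicity vector yields the clean identity $A(pt, x) = M(t) \cdot B(t, x)^p$, equivalently $(1 - u) A(u, x) = \pa*{(1 - u/p) B(u/p, x)}^p$. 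The stable form $[x^d] A(u, x) = Q_d(u) + \alpha(2d, d)\, u^{2d}/(1 - u)$ (coming from the stability just established) identifies $\lim_{u \to 1^-}(1 - u)[x^d] A(u, x) = \alpha(2d, d)$; a parallel computation using \eqref{eq:pre:beta_in_alpha_terms} combined with $\alpha$-stability identifies $(1 - 1/p) B(1/p, x)$ with $\sum_d \beta(2d, d) x^d$, yielding the claimed power-series identity.

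The main obstacle is the $n$-independence step, specifically the claim that for $f \in P_{n, n}$ with $n \ge 2d$ the law of $\min(\rdset*{f}, d)$ depends only on the bottom $2d$ coefficients. This requires a careful iterated Hensel / Newton-polygon argument and will constitute the bulk of the work; once it is in place, both the rationality and the power-series manipulations reduce to routine bookkeeping.
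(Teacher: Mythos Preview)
Your approach differs entirely from the paper's: the paper simply cites \cite{bhargava2021density} for all three assertions (rationality and $n$-independence from their Theorem~1, the power-series identity from their equation~(5) combined with the identity $\mathcal{A}_d(1)=\mathcal{A}_d(p)$). So your proposal is an attempt to reprove these results from scratch rather than to use them. Your rationality argument and your generating-function derivation of $(1-u)A(u,x) = \bigl((1-u/p)B(u/p,x)\bigr)^p$ from \eqref{eq:pre:alpha_in_beta_terms} are both correct and pleasant; the extraction of the limiting coefficients at $u=1$ and $t=1/p$ is also sound, \emph{granted} the $n$-stability.

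The genuine gap is in the stability step. The displayed identity $\rdset{f} = \binom{\min(\rdset*{f},d)}{d}$ is false: for $\rdset*{f} > d$ one has $\binom{\rdset*{f}}{d} > 1 = \binom{d}{d}$. So even if you could show that the law of $\min(\rdset*{f},d)$ depends only on the bottom $2d$ coefficients of $f\in P_{n,n}$, that would not pin down $\Ex\bigl[\binom{\rdset*{f}}{d}\bigr]$. You would need the full law of $\rdset*{f}$ (or at least its $d$-th factorial moment) to be determined by those coefficients, and this stronger statement is not obvious: a polynomial in $P_{n,n}$ can have anywhere from $0$ to $n$ roots in $p\ZZ_p$, and the higher coefficients do influence the count. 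The Newton-polygon heuristic you sketch controls which coefficients matter for detecting \emph{some} roots, but not for counting all of them. In \cite{bhargava2021density} the stability is proved not by a direct coefficient-truncation argument but by manipulating the recursions to show $\alpha(n,d)-\alpha(n-1,d)$ and $\beta(n,d)-\beta(n-1,d)$ vanish for $n>2d$; if you want a self-contained proof, that is the route to take.
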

\begin{proof}
By \cite[Theorem 1.(a) and Theorem 1.(c)]{bhargava2021density} $\alpha\pa*{n, d}$ and $\beta\pa*{n,d}$ are rational functions in $p$ and are independent of $n$ for $n \ge 2d$.
We use the notation of $\Acl_d$ and $\Bcl_d$ as defined in \cite{bhargava2021density}.
According to \cite[Theorem 1.(c)]{bhargava2021density} and the identity $\Acl_d\pa*{1} = \Acl_d\pa*{p}$ (see paragraph after \cite[eq. (38)]{bhargava2021density}) we have
\begin{equation}
\label{eq:abi:power-series-identities}
\alpha\pa*{2d, d} = \Acl_d\pa*{1} = \Acl_d\pa*{p} \quad \text{and} \quad \beta\pa*{2d, d} = \Bcl_d\pa*{1}.
\end{equation}
The equality of power series is proved by setting $t = 1$ in \cite[eq. (5)]{bhargava2021density} and then plugging~\eqref{eq:abi:power-series-identities}.
\end{proof}

Due to \autoref{pre:alpha-beta-independence} we can abbreviate and write $\alpha\pa*{d} = \alpha\pa*{n, d}$ and $\beta\pa*{d} = \beta\pa*{n, d}$ for some $n \ge 2d$.
Using the shorthanded notation, \eqref{eq:alpha-beta-power-series} is immediate result of \autoref{pre:alpha-beta-independence}.

\begin{lem}
\label{pre:alpha-beta-values}
For all $n > 2d$,
\begin{align*}
\alpha\pa*{d} &= \pa*{1-p}\sum_{m=0}^{n-1} \alpha\pa*{m,d} p^m + \alpha\pa*{n, d} p^n \qquad\text{and} \\
\beta\pa*{d} &= \pa*{1-p^{-1}}\sum_{m=0}^{n-1} \beta\pa*{m,d} p^{-m} + \beta\pa*{n, d} p^{-n}.
\end{align*}
\end{lem}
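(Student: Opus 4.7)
My plan is to show that the right-hand sides of both identities, viewed as functions of $n$, are constant for $n>2d$, and then to identify these constants with the left-hand sides.

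\emph{Step 1 (Constancy).} Let $F(n)$ and $G(n)$ denote the right-hand sides of the $\alpha$- and $\beta$-identities respectively. A direct telescoping computation yields
\[F(n+1)-F(n) = p^{n+1}(\alpha(n+1,d)-\alpha(n,d)), \qquad G(n+1)-G(n) = p^{-(n+1)}(\beta(n+1,d)-\beta(n,d)).\]
By \autoref{pre:alpha-beta-independence} both differences vanish for every $n\ge 2d$, so $F$ and $G$ are constant on $n>2d$.

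\emph{Step 2 (The $\alpha$-identity).} To compute the constant $F$, I apply the recurrence \eqref{eq:pre:beta_in_alpha_terms} at both $n$ and $n+1$ and subtract. The only new summation terms correspond to $r=n$, so
\[\beta(n+1,d)-\beta(n,d) = p^{-\binom{n+1}{2}}\alpha(n+1,d)-p^{-\binom{n}{2}}\alpha(n,d)+(p-1)p^{-\binom{n+1}{2}}\sum_{s=0}^{n-1}p^{s}\alpha(s,d),\]
where I have used $\alpha(s,d)=0$ for $s<d$ to extend the inner sum down to $s=0$. For $n\ge 2d$ the left side is zero and $\alpha(n+1,d)=\alpha(n,d)=\alpha(d)$. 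Multiplying through by $p^{\binom{n+1}{2}}$ and using $\binom{n+1}{2}-\binom{n}{2}=n$ gives $\alpha(d)(p^{n}-1)=(p-1)\sum_{s=0}^{n-1}p^{s}\alpha(s,d)$, which is equivalent to the claimed $\alpha$-identity after trivial rearrangement.

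\emph{Step 3 (The $\beta$-identity).} Since $G$ is constant for $n>2d$ and the boundary term $\beta(n,d)p^{-n}$ tends to $0$, it is enough to show $\sum_{m\ge 0}\beta(m,d)p^{-m}=\beta(d)/(1-p^{-1})$. Expanding each $\beta(m,d)$ via \eqref{eq:pre:beta_in_alpha_terms}, the sum splits into a diagonal piece $T_{1}:=\sum_{m\ge 0}p^{-\binom{m+1}{2}}\alpha(m,d)$ and a double sum $D$. Swapping order of summation in $D$, evaluating the inner geometric series $\sum_{m>r}p^{-m}=p^{-r}/(p-1)$, and applying the binomial identity $\binom{r+1}{2}+r=\binom{r+2}{2}-1$ extracts an extra factor of $p$; after re-indexing $r\mapsto r+1$ one obtains $D=pU-T_{1}$, where $U=\sum_{d\le s<r}p^{-\binom{r+1}{2}+s}\alpha(s,d)=\beta(d)/(p-1)$ by the $n\to\infty$ limit of \eqref{eq:pre:beta_in_alpha_terms}. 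Therefore $T_{1}+D=pU=\beta(d)/(1-p^{-1})$, as required. The main technical obstacle lies in this last step: the shift $r\mapsto r+1$ must produce exactly the factor of $p$ needed so that $T_{1}$ cancels against the diagonal correction term, which hinges on the binomial identity above.
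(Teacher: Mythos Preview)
Your proof is correct. Steps 1 and 2 are clean; in Step 3 the re-indexing $r\mapsto r+1$ together with the identity $\binom{r+1}{2}+r=\binom{r+2}{2}-1$ indeed produces exactly $D=pU-T_1$, so $T_1+D=pU=\beta(d)/(1-p^{-1})$ as you claim.

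The paper's proof is of a quite different nature. Rather than manipulating the recurrence \eqref{eq:pre:beta_in_alpha_terms} directly, it invokes \cite[eq.~(38)]{bhargava2021density}, specializes the variable there to $t=p$, and identifies the value via the equality $\alpha(d)=\Acl_d(p)$ recorded in \eqref{eq:abi:power-series-identities}. The $\beta$-identity is then deduced in one line from the $p\leftrightarrow p^{-1}$ duality of \cite[Theorem~1.(a)]{bhargava2021density}. So the paper's argument is essentially a citation, while yours is a self-contained derivation from the two lemmas already stated in this section (\autoref{pre:alpha-beta-recursion} and \autoref{pre:alpha-beta-independence}). Your route is longer and more computational---Step 3 in particular---but it has the advantage of not sending the reader back to \cite{bhargava2021density} for an unnumbered formula; the paper's route is quicker but opaque unless one has that reference at hand. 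Note also that your Step 1 is strictly speaking redundant for the $\alpha$-identity, since Step 2 already establishes the claim for every $n\ge 2d$; you only genuinely need the constancy argument for the $\beta$-identity to justify passing to the limit.
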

\begin{proof}
By setting $t = p$ in \cite[eq. (38)]{bhargava2021density} and writing $n-1$ instead of $n$, we obtain
\begin{equation*}
\Acl_d\pa*{p} = \pa*{1-p}\sum_{m=0}^{n-1} \alpha\pa*{m,d} p^m + \alpha\pa*{n, d} p^n.
\end{equation*}
Plugging \eqref{eq:abi:power-series-identities} into the last equation gives the equality for $\alpha\pa*{d}$.
And the other equality is obtained by applying the inversion $p\leftrightarrow 1/p$ and \cite[Theorem 1.(a)]{bhargava2021density}.
\end{proof}

Next, we prove two lemmas regrading the values of $\alpha\pa*{n, d}$, $\beta\pa*{n, d}$ that are not covered in \cite{bhargava2021density}.
\begin{lem}
\label{pre:roots-in-haar-polynomial}
Let $f$ be a random polynomial distributed uniformly in $\ZZ_p\br*{X}_n^1$. Then for any $0 < d \le n /2$ we have that
\begin{equation*}
\Ex\br*{\rdset{f_0}} = \beta\pa*{d},
\end{equation*}
where $f_0\pa*{X} = f\pa*{pX}$.
\end{lem}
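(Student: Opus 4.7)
The plan is to reduce the computation to the uniform distribution on $P_{m,m}$ by extracting the Hensel factor of $f$ whose roots land in $p\ZZ_p$. First observe that $f_0\pa*{x}=f\pa*{px}$ vanishes exactly when $f\pa*{px}=0$, so the map $x\mapsto px$ gives a bijection between roots of $f_0$ in $\ZZ_p$ and roots of $f$ in $p\ZZ_p$; in particular $\rdset*{f_0}$ equals the number of roots of $f$ lying in $p\ZZ_p$. Let $m$ be the $X$-adic valuation of the reduction $\bar f\in\FF_p\br*{X}_n^1$, so that $f\in P_{m,n}$. Factor $\bar f=X^m\bar h$ with $\gcd\pa*{X,\bar h}=1$, and lift this by Hensel's lemma (\autoref{pre:hensels-lemma}) to a factorization $f=g\tilde h$ with $g\in P_{m,m}$ and $\tilde h\in P_{0,n-m}$ (for $m=n$ just take $g=f$, $\tilde h=1$). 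Any root of $\tilde h$ in $\ZZ_p$ reduces to a nonzero element of $\FF_p$ by the definition of $P_{0,n-m}$, so $\tilde h$ has no roots in $p\ZZ_p$; conversely every root of $g$ in $\ZZ_p$ lies in $p\ZZ_p$ since $g\equiv X^m\pmod p$. Consequently the roots of $f$ in $p\ZZ_p$ are exactly the roots of $g$ in $\ZZ_p$, and $\rdset{f_0}=\rdset{g}$.

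Next I would condition on the value of $m$ and invoke the decomposition of the uniform distribution. A direct check on the coefficients of $f$ gives $\Pr\pa*{f\in P_{m,n}}=\pa*{1-p^{-1}}p^{-m}$ for $0\le m<n$ and $\Pr\pa*{f\in P_{n,n}}=p^{-n}$. Conditional on $f\in P_{m,n}$, the polynomial $f$ is uniform on $P_{m,n}$, so by \autoref{pre::uniform-distribution-in-P} its Hensel factor $g$ is uniform on $P_{m,m}$ (the case $m=n$ is immediate). Since $\beta\pa*{m,d}=\Ex\br*{\rdset{g}}$ in this situation, the law of total expectation yields
\begin{equation*}
\Ex\br*{\rdset{f_0}} = \pa*{1-p^{-1}}\sum_{m=0}^{n-1}p^{-m}\beta\pa*{m,d} + p^{-n}\beta\pa*{n,d}.
\end{equation*}

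To finish, I would compare this with \autoref{pre:alpha-beta-values}. When $n>2d$ that lemma directly identifies the right-hand side with $\beta\pa*{d}$. The borderline case $n=2d$ is handled by applying the lemma with $n+1$ in place of $n$ and then using $\beta\pa*{n,d}=\beta\pa*{n+1,d}=\beta\pa*{d}$, which holds by \autoref{pre:alpha-beta-independence} since $n\ge 2d$; collecting the two tail terms $\pa*{1-p^{-1}}p^{-n}\beta\pa*{d}+p^{-\pa*{n+1}}\beta\pa*{d}=p^{-n}\beta\pa*{d}$ collapses the identity to the one above. The only real subtlety is this edge case; everything else is clean bookkeeping of Hensel factors together with the uniformity transfer on $P_{m,n}$.
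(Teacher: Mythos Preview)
Your proof is correct and follows essentially the same route as the paper: condition on the stratum $P_{m,n}$, use \autoref{pre::uniform-distribution-in-P} to identify the distribution of the Hensel factor $g\in P_{m,m}$, match the roots of $f_0$ with those of $g$, and then invoke \autoref{pre:alpha-beta-values}. You are in fact more careful than the paper about the borderline case $n=2d$ (where \autoref{pre:alpha-beta-values} as stated requires $n>2d$), which the paper's own proof passes over without comment.
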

\begin{proof}
Let $m \le n$ be a non-negative integer and assume that $h \in P_{m,n}$ occurs.
So $h$ is distributed uniformly in $P_{m,n}$ and by \autoref{pre::uniform-distribution-in-P} we get that there exists two random polynomials $f$ and $g$ distributed uniformly in $P_{m,m}$ and $P_{0,n-m}$, respectively, such that $h=fg$.

The map $x \mapsto px$ is a bijection from integer roots of $h_0$ to integer roots of $f$.
Indeed, if $x$ is an integer root of $h_0$ then $f\pa*{px} g\pa*{px} \equiv h_0\pa*{x} = 0$ and since $g\pa*{px} \equiv g\pa*{0} \not\equiv 0 \pmod{p}$ we get that $f\pa*{px} = 0$.
For the other direction, if $y \in \ZZ_p$ is a root of $f$ then $y^m \equiv 0 \pmod{p}$ and then $p \mid y$.
So there exists $x \in\ZZ_p$ such that $y=px$ and $h_0\pa*{x} = 0$ follows immediately.
Therefore, we have that $\rdset{h_0} = \rdset{f}$.

Since $\ZZ_p\br*{X}^1_n = \bigsqcup_{m=0}^n P_{m,n}$, we use the law of total expectation to get
\begin{equation*}
\begin{aligned}
\Ex\br*{\rdset{h_0}}
  &= \sum_{m=0}^n \Ex\br*{\rdset{f_0} \cond f \in P_{m,n}} \Pr\pa*{f \in P_{m,n}} \\
  &= \sum_{m=0}^n \Ex\br*{\rdset{f} \cond f \in P_{m,n}} \Pr\pa*{f \in P_{m,n}}.
\end{aligned}
\end{equation*}
We recall that $f$ is distributed uniformly on $P_{m,m}$ when $h \in P_{m,n}$, hence $\Ex\br*{\rdset{f} \cond h \in P_{m,n}} = \beta\pa*{n,d}$.
Moreover, the probability $\Pr\pa*{f \in P_{m,n}}$ equals $\pa*{p-1}/p^{m+1}$ when $m < n$ and $1/p^n$ when $m=n$.
Therefore,
\begin{equation*}
\begin{aligned}
\Ex\br*{\rdset{h_0}}
  &= \sum_{m=0}^{n-1} \beta\pa*{m,d}\cdot \frac{p-1}{p^{m+1}}  + \beta\pa*{n,d} p^{-n}  \\
  &= \pa*{1 - p^{-1}} \sum_{m=0}^n \beta\pa*{m,d} p^{-m} + \beta\pa*{n,d}p^{-n}.
\end{aligned}
\end{equation*}
And using \autoref{pre:alpha-beta-values} finish the proof.
\end{proof}
\begin{lem}
\label{alpha-beta-asymptotic}
For any integers $0\le d \le n$,
\begin{align*}
  \log_p \alpha\pa*{n, d} &= -\frac{d^2}{2\pa*{p-1}} + O\pa*{d\log d} \qquad\text{and} \\
  \log_p \beta\pa*{n, d} &= -\frac{pd^2}{2\pa*{p-1}} + O\pa*{d\log d}
\end{align*}
as $d \to \infty$.
\end{lem}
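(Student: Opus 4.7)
My plan is to prove both asymptotics jointly, by induction on $d$, combining the coupled recurrences of \autoref{pre:alpha-beta-recursion} with the power series identity \eqref{eq:alpha-beta-power-series}. The base case $d = 0$ is $\alpha\pa*{n,0} = \beta\pa*{n,0} = 1$; the inductive hypothesis assumes both asymptotics hold for all $d' < d$ uniformly in $n \ge d'$.

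In the stable regime $n \ge 2d$, \autoref{pre:alpha-beta-independence} gives $\alpha\pa*{d} = \alpha\pa*{n,d}$ and $\beta\pa*{d} = \beta\pa*{n,d}$. First I would extract the coefficient of $t^d$ from \eqref{eq:alpha-beta-power-series} to obtain the Cauchy product
\[\alpha\pa*{d} = \sum_{d_0 + \dots + d_{p-1} = d} \prod_{r=0}^{p-1} \beta\pa*{d_r}.\]
Using the inductive hypothesis $\log_p \beta\pa*{d_r} \approx -\frac{p d_r^2}{2\pa*{p-1}}$ and the strict convexity of $x \mapsto x^2$, the sum is dominated by the equipartition $d_r \approx d/p$, which minimizes $\sum_r d_r^2 = d^2/p$. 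This yields $\log_p \alpha\pa*{d} \approx -\frac{p}{2\pa*{p-1}} \cdot \frac{d^2}{p} = -\frac{d^2}{2\pa*{p-1}}$, with the $O\pa*{d \log d}$ error absorbing both the multinomial factor and the rounding when $p \nmid d$.

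For $\beta\pa*{d}$, I would use \eqref{eq:pre:beta_in_alpha_terms} in the limit $n \to \infty$:
\[\beta\pa*{d} = \pa*{p-1} \sum_{s \ge d} \alpha\pa*{s,d} \sum_{r > s} p^{s - \binom{r+1}{2}}.\]
The inner $r$-sum is dominated by $r = s+1$, contributing $\sim p^{-s\pa*{s+1}/2 - 1}$. The outer $s$-sum concentrates near $s = d$: for $s \ge 2d$ we invoke the asymptotic $\log_p \alpha\pa*{s,d} = \log_p \alpha\pa*{d}$ established above, and for $d \le s < 2d$ an analogous boundary argument via \eqref{eq:pre:alpha_in_beta_terms} gives $\log_p \alpha\pa*{s,d} = -\frac{d^2}{2\pa*{p-1}} + O\pa*{d\log d}$. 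The leading contribution is $\sim \alpha\pa*{d,d}\, p^{-d\pa*{d+1}/2}$, and the algebraic identity $\tfrac{1}{2} + \tfrac{1}{2\pa*{p-1}} = \tfrac{p}{2\pa*{p-1}}$ then yields $\log_p \beta\pa*{d} = -\frac{p d^2}{2\pa*{p-1}} + O\pa*{d \log d}$.

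The unstable regime $n/2 < d \le n$ is handled by the same recurrences with boundary inputs. The main obstacle is precisely the boundary analysis for $s$ in the range $d \le s < 2d$, where the stable equality of \autoref{pre:alpha-beta-independence} fails; the induction must carry both $\alpha\pa*{s,d}$ and $\beta\pa*{s,d}$ asymptotics simultaneously at every such $s$ using the coupled structure of \eqref{eq:pre:alpha_in_beta_terms}--\eqref{eq:pre:beta_in_alpha_terms}, and extra care is needed to ensure the $O\pa*{d\log d}$ errors do not compound across the super-exponentially weighted sum in \eqref{eq:pre:beta_in_alpha_terms}. Matching lower bounds come for free from the fact that both recurrences sum nonnegative terms (expectations of nonnegative random variables), so no cancellation can reduce the leading magnitude.
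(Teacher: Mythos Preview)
The paper takes a substantially different route. It does \emph{not} induct on $d$; instead it imports from \cite{buhler2006probability} the asymptotic
\[
\log_p \alpha\pa*{d,d} = -\frac{d^2}{2\pa*{p-1}} + O\pa*{d\log d}
\]
for the probability that a uniform monic degree-$d$ polynomial over $\ZZ_p$ totally splits. Specializing \eqref{eq:pre:beta_in_alpha_terms} to $n=d$ gives $\beta\pa*{d,d} = p^{-\binom{d}{2}}\alpha\pa*{d,d}$, hence the $\beta$-asymptotic at $n=d$. With both values anchored at $n=d$, the upper bounds for all $n\ge d$ are proved by induction on $n$: the paper inserts the inductive bounds into \eqref{eq:pre:alpha_in_beta_terms} and \eqref{eq:pre:beta_in_alpha_terms}, obtains two inequalities each involving both $\alpha\pa*{n,d}$ and $\beta\pa*{n,d}$, and adds them to eliminate the cross-terms. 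The lower bounds are done by keeping a single well-chosen summand in each recursion.

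Your induction on $d$ avoids the external input but creates a circularity you have not closed. In the Cauchy product $\alpha\pa*{d}=\sum_{d_0+\dots+d_{p-1}=d}\prod_r\beta\pa*{d_r}$ the compositions with one $d_r=d$ contribute $p\,\beta\pa*{d}$, which is \emph{not} covered by your inductive hypothesis; so the upper bound on $\alpha\pa*{d}$ already requires the upper bound on $\beta\pa*{d}$. Your route to $\beta\pa*{d}$ via \eqref{eq:pre:beta_in_alpha_terms} in turn needs $\alpha\pa*{s,d}$ for every $s\ge d$, including the stable $\alpha\pa*{d}$ and all unstable values $\alpha\pa*{s,d}$ with $d\le s<2d$. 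This loop can be broken---the cross-coefficient of $\beta\pa*{d}$ in $\alpha\pa*{d}$ is negligible, and the coefficient of $\alpha\pa*{d}$ in \eqref{eq:pre:beta_in_alpha_terms} is super-exponentially small---but the untangling requires either the same linear-combination trick the paper uses, carried out at every $s$ in $d\le s<2d$ via a nested induction on $n$, or an a priori value for $\alpha\pa*{d,d}$. You have deferred exactly this to ``an analogous boundary argument'', and that argument is the content of the proof; the claim that lower bounds ``come for free'' also glosses over the need to exhibit a single term of the correct order in each recursion (the paper treats $\beta\pa*{d+1,d}$ separately for precisely this reason).
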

\begin{proof}
Let $f$ be a random polynomial distributed uniformly on $\ZZ_p\br*{X}^1_n$, so $\alpha\pa*{n, d} = \Ex\br*{\rdset{f}}$.
We take a look at the values of $\alpha\pa*{n, d}$ when $n=d$.
In this case we have at most $d$ roots, hence $\rdset{f}$ is an indicator function of the event that $f$ has exactly $d$ roots i.e.\ $f$ totally split.
So $\alpha\pa*{d,d} = \Pr\pa*{f\text{ totally splits}}$ and this probability has an asymptotic formula in \cite[Theorem 5.1]{buhler2006probability} which implies
\begin{equation*}
\log_p \alpha\pa*{d, d} = - \frac{d^2}{2\pa*{p-1}} + O\pa*{d\log d}.
\end{equation*}

Setting $n=d$ in \eqref{eq:pre:beta_in_alpha_terms} gives
\begin{equation*}
\beta\pa*{d,d} = p^{-\binom{d}{2}} \alpha\pa*{d, d}.
\end{equation*}
Since $\binom{d}{2} = d^2/2 + O\pa*{d}$ we obtain
\begin{equation*}
\log_p \beta\pa*{d, d} = - \frac{pd^2}{2\pa*{p-1}} + O\pa*{d \log d}.
\end{equation*}

From the definition of big-O notations we that there exists a constant $C_0 > 0$ such that for any $d \ge 0$
\begin{align}
\label{eq:aba:alpha-const-bound}
\abs*{\log_p \alpha\pa*{d,d} + \frac{d^2}{2\pa*{p-1}}} &< C_0 d \log d \qquad\text{and}\\
\label{eq:aba:beta-const-bound}
\abs*{\log_p \beta\pa*{d,d} + \frac{pd^2}{2\pa*{p-1}}} &< C_0 d \log d.
\end{align}

We continue our proof by bounding $\log_p\alpha\pa*{n, d}$ and $\log_p\beta\pa*{n, d}$ from both sides.

\subsubsection*{Upper bound:}
We prove by induction on $n$ the following inequalities
\begin{align}
\label{eq:aba:alpha-upper-bound}
\log_p \alpha\pa*{n,d} &< - \frac{d^2}{2\pa*{p-1}} + C_0 d \log d \qquad\text{and}\\
\label{eq:aba:beta-upper-bound}
\log_p \beta\pa*{n,d} &<  - \frac{pd^2}{2\pa*{p-1}} + C_0 d \log d.
\end{align}
For the base of the induction we take $n = d$.
In this case, \eqref{eq:aba:alpha-upper-bound} and \eqref{eq:aba:beta-upper-bound} are immediate implication of \eqref{eq:aba:alpha-const-bound} and \eqref{eq:aba:beta-const-bound} respectively.

For $n > d$, consider the inner sum, $\sum_{d_0+\dots+d_{p-1} = d} \prod_{i=0}^{p-1} \beta\pa*{n_i, d_i}$, of \eqref{eq:pre:alpha_in_beta_terms} in the case that $\bar f = \pa*{X - r}^n$ for some $r\in \FF_p$.
If there exists $i\ne r$ such that $d_i \ne 0$ then $\beta\pa*{n_i, d_i} = \beta\pa*{0, d_i} = 0$ by \eqref{eq:alpha-beta-initial} and the product eliminates.
Therefore, this sum has only one non-zero summand which obtained when $d_r = d$ and $d_i = 0$ for $i \ne r$.
Hence, when $\bar f = \pa*{X - r}^n$
\begin{equation*}
\sum_{d_0+\dots+d_{p-1} = d} \; \prod_{i=0}^{p-1} \beta\pa*{n_i, d_i} = \beta\pa*{n, d}.
\end{equation*}
We plug this in \eqref{eq:pre:alpha_in_beta_terms} and get
\begin{equation*}
\alpha\pa*{n,d}
  = p^{-n} \sum_{\substack{\bar f \in \FF_p\br*{X}_n^1\\ n_i < n}} \; \sum_{d_0+\dots+d_{p-1} = d} \; \prod_{i=0}^{p-1} \beta\pa*{n_i, d_i} + p^{-n + 1} \beta\pa*{n, d}.
\end{equation*}
We change the order of summation:
\begin{equation*}
\alpha\pa*{n,d}
  = p^{-n} \sum_{d_0+\dots+d_{p-1} = d} \; \sum_{\substack{\bar f \in \FF_p\br*{X}_n^1\\ n_i < n}} \;  \prod_{i=0}^{p-1} \beta\pa*{n_i, d_i} + p^{-n + 1} \beta\pa*{n, d}.
\end{equation*}
We note that if $n_i < d_i$ for some $i$ then $\beta\pa*{n_i, d_i}=0$ and the product eliminates.
Hence,
\begin{equation}
\label{eq:aba:alpha_nd-clean}
\alpha\pa*{n,d}
  = p^{-n} \sum_{d_0+\dots+d_{p-1} = d} \; \sum_{\substack{\bar f \in \FF_p\br*{X}_n^1\\ d_i \le n_i < n}} \;  \prod_{i=0}^{p-1} \beta\pa*{n_i, d_i} + p^{-n + 1} \beta\pa*{n, d}.
\end{equation}

We take a look at the product of \eqref{eq:aba:alpha_nd-clean} for some non-negative integers $d_0,\dots,d_{p-1}$ such that $d_0 + \dots + d_{p-1} = d$ and polynomial $\bar f \in \FF_p\br*{X}_n^1$ such that $d_i \le n_i < n$ for all $i$. Using \eqref{eq:aba:beta-upper-bound} from the induction's assumption we get that
\begin{equation*}
\begin{aligned}
\log_p \pa*{\prod_{i=0}^{p-1} \beta\pa*{n_i, d_i}}
  &< \sum_{i=1}^{p-1}\pa*{-\frac{p d_i^2}{2\pa*{p-1}} + C_0 d_i\log d_i} \\
  &\le -\frac{p}{2\pa*{p-1}} \sum_{i=0}^{p-1} d_i^2 + C_0 d \log d.
\end{aligned}
\end{equation*}
By Cauchy-Schwarz inequality, $d^2 = \pa*{\sum_{i=0}^{p-1} d_i}^2 \le p \sum_{i=0}^{p-1} d_i^2$.
Hence,
\begin{equation}
\label{eq:aba:beta-prod-bound-2}
\log_p \pa*{\prod_{i=0}^{p-1} \beta\pa*{n_i, d_i}}
  < -\frac{d^2}{2\pa*{p-1}} + C_0 d\log d.
\end{equation}
Define the constant $A_d$ by
\begin{equation*}
\log _p A_d = -\frac{d^2}{2\pa*{p-1}} + C_0 d\log d.
\end{equation*}
So we can write \eqref{eq:aba:beta-prod-bound-2} as
\begin{equation*}
\prod_{i=0}^{p-1} \beta\pa*{n_i, d_i} < A_d.
\end{equation*}

We put this in the outer sum of \eqref{eq:aba:alpha_nd-clean} and get
\begin{equation}
\label{eq:aba:alpha-outer-sum-bound}
\begin{aligned}
\sum_{d_0+\dots+d_{p-1} = d} \; \sum_{\substack{\bar f \in \FF_p\br*{X}_n^1\\ d_i \le n_i < n}} \; \prod_{i=0}^{p-1} \beta\pa*{n_i, d_i}
  &< \sum_{d_0+\dots+d_{p-1} = d} \; \sum_{\substack{\bar f \in \FF_p\br*{X}_n^1\\ d_i \le n_i < n}} A_d\\
  &= A_d \sum_{d_0+\dots+d_{p-1} = d} \; \sum_{\substack{\bar f \in \FF_p\br*{X}_n^1\\ d_i \le n_i < n}} 1.
\end{aligned}
\end{equation}
We look at the sums in right most side of \eqref{eq:aba:alpha-outer-sum-bound}, and change the order of summation so
\begin{equation*}
\sum_{d_0+\dots+d_{p-1} = d} \; \sum_{\substack{\bar f \in \FF_p\br*{X}_n^1\\ d_i \le n_i < n}} 1
  = \sum_{\substack{\bar f \in \FF_p\br*{X}_n^1\\ n_i < n}} \; \sum_{\substack{d_0+\dots+d_{p-1} = d \\ d_i \le n_i}} 1.
\end{equation*}
We add to the outer sum the $p$ polynomials of the form $\pa*{X-r}^n$.
For each of those polynomials the inner sum is equal $1$ since all but one $n_i$ is equal $0$.
Therefore,
\begin{equation*}
\sum_{d_0+\dots+d_{p-1} = d} \; \sum_{\substack{\bar f \in \FF_p\br*{X}_n^1\\ d_i \le n_i < n}} 1
  = \sum_{\bar f \in \FF_p\br*{X}_n^1} \; \sum_{\substack{d_0+\dots+d_{p-1} = d \\ d_i \le n_i}} 1 - p.
\end{equation*}
Changing order of summation in the right most side gives
\begin{equation}
\label{eq:aba:composition-counting}
\sum_{d_0+\dots+d_{p-1} = d} \; \sum_{\substack{\bar f \in \FF_p\br*{X}_n^1\\ d_i \le n_i < n}} 1
  = \sum_{d_0+\dots+d_{p-1} = d} \#\set*{\bar f \in \FF_p\br*{X}_n^1 : d_i \le n_i} - p.
\end{equation}
We consider the set $\set*{\bar f \in \FF_p\br*{X}_n^1 : n_r \ge d_r}$.
This set is the set of all polynomials $\bar f \in \FF_p\br*{X}_n^1$ such that $\prod_{r=0}^{p-1} \pa*{X-r}^{d_r} \mid \bar f$.
Hence,
\begin{equation}
\label{eq:aba:polynomial-counting}
\# \set*{\bar f \in \FF_p\br*{X}_n^1 : n_r \ge d_r} = p^{n-d}.
\end{equation}
Plugging \eqref{eq:aba:polynomial-counting} into \eqref{eq:aba:composition-counting} gives
\begin{equation*}
\sum_{d_0+\dots+d_{p-1} = d} \; \sum_{\substack{\bar f \in \FF_p\br*{X}_n^1\\ d_i \le n_i < n}} 1
  = \sum_{d_0+\dots+d_{p-1} = d} p^{n-d} - p
  \le p^n - p,
\end{equation*}
where the inequality is true since the number of summands in the sum is at most $p^d$.
We plug the last inequality\ into \eqref{eq:aba:alpha-outer-sum-bound} to get
\begin{equation*}
\sum_{d_0+\dots+d_{p-1} = d} \; \sum_{\substack{\bar f \in \FF_p\br*{X}_n^1\\ d_i \le n_i < n}} \; \prod_{i=0}^{p-1} \beta\pa*{n_i, d_i}
  < A_d \pa*{p^n - p}.
\end{equation*}
We put the last inequality in \eqref{eq:aba:alpha_nd-clean} and apply simple calculation we get that
\begin{equation}
\label{eq:aba:alpha-beta-ineq}
p^{n-1} \alpha\pa*{n, d} - \beta\pa*{n, d} < \pa*{p^{n-1} - 1} A_d.
\end{equation}

Next, we use \eqref{eq:aba:alpha-upper-bound} from the induction's assumption to infer
\begin{equation*}
\alpha\pa*{s, d} < A_d
\end{equation*}
for $d \le s < n$.
We plug this into \eqref{eq:pre:beta_in_alpha_terms}:
\begin{equation}
  \label{eq:aba:beta_nd}
  \beta\pa*{n,d}
  < p^{-\binom{n}{2}} \alpha\pa*{n, d} + \pa*{p-1} \sum_{d\le s < r < n} p^{-\binom{r+1}{2}} p^s A_d.
\end{equation}
We split the sum in the left most side into two sums and apply some basic computations
\begin{equation*}
  \begin{aligned}
    \sum_{d\le s < r < n} p^{-\binom{r+1}{2}} p^s
    &= \sum_{r = d+1}^{n-1} p^{-\binom{r+1}{2}} \sum_{s=d}^{r-1} p^s \\
    &= \sum_{r = d+1}^{n-1} p^{-\binom{r+1}{2}}\cdot \frac{p^r - p^d}{p-1} \\
    &= \frac{1}{p-1} \sum_{r = d+1}^{n-1} \pa*{p^{-\binom{r}{2}} - p^{-\binom{r+1}{2} + d}} \\
    &\le \frac{1}{p-1} \sum_{r = d+1}^{n-1} \pa*{p^{-\binom{r}{2}} - p^{-\binom{r+1}{2}}} \\
    &= \frac{1}{p-1} \pa*{p^{-\binom{d+1}{2}} - p^{-\binom{n}{2}}}.
  \end{aligned}
\end{equation*}
We combine this with \eqref{eq:aba:beta_nd} to create the inequality
\begin{equation}
  \label{eq:aba:beta-alpha-ineq}
  \beta\pa*{n,d} - p^{-\binom{n}{2}} \alpha\pa*{n,d}
  < \pa*{p^{-\binom{d+1}{2}} - p^{-\binom{n}{2}}} A_d.
\end{equation}

Adding \eqref{eq:aba:alpha-beta-ineq} and \eqref{eq:aba:beta-alpha-ineq} gives
\begin{equation*}
  \pa*{p^{n-1} - p^{-\binom{n}{2}}}\alpha\pa*{n, d}
  < \pa*{p^{n-1} + p^{-\binom{d+1}{2}} - 1 -p^{-\binom{n}{2}}}  A_d.
\end{equation*}
We have that $p^{-\binom{d+1}{2}} - 1 \le 0$ so
\begin{equation}
  \label{eq:aba:final-alpha-bound}
  \alpha\pa*{n, d}
  < \frac{p^{n-1} + p^{-\binom{d+1}{2}} - 1 -p^{-\binom{n}{2}}}{p^{n-1} - p^{-\binom{n}{2}}} \cdot  A_d
  \le  A_d.
\end{equation}
Taking $\log_p$ on both side of the inequality gives \eqref{eq:aba:alpha-upper-bound}.

We use \eqref{eq:aba:final-alpha-bound} and \eqref{eq:aba:beta-alpha-ineq} to get the bound
\begin{equation*}
  \beta\pa*{n,d} < p^{-\binom{n}{2}} \alpha\pa*{n,d} + \pa*{p^{-\binom{d+1}{2}} - p^{-\binom{n}{2}}} A_d \le p^{-\binom{d+1}{2}} A_d.
\end{equation*}
Taking $\log_p$ on both side of the inequality  gives
\begin{equation*}
\log_p \beta\pa*{n, d}
  < - \binom{d+1}{2} - \frac{d^2}{2\pa*{p-1}} + C_0 d \log d.
\end{equation*}
Since $\binom{d+1}{2} > d^2 / 2$, we get that
\begin{equation}
\label{eq:aba:final-beta-bound}
\log_p \beta\pa*{n, d}
  < - \frac{pd^2}{2\pa*{p-1}} + C_0 d \log d.
\end{equation}
The inequality \eqref{eq:aba:final-alpha-bound} after taking $\log_p$ and \eqref{eq:aba:final-beta-bound} finish the induction.

\subsubsection*{Lower Bound:}
We start with bounding $\beta\pa*{n, d}$. For $n = d$, by \eqref{eq:aba:beta-const-bound} we have
\begin{equation}
\label{eq:aba:beta-dd-lower-bound}
\log_p \beta\pa*{d, d} > - \frac{pd^2}{2\pa*{p-1}} - C_0 d\log d.
\end{equation}

For $n = d + 1$, let $g$ be a random polynomial distributed uniformly on $P_{n,n}$, so that $\beta\pa*{n, d} = \Ex\br*{\rdset{g}}$.
The polynomial $g$ cannot miss exactly one root, so $\rdset*{g} = d + 1$ or $\rdset*{g} < d$.
In the later case, we have that $\rdset{g} = 0$.
And when $\rdset*{g} = d + 1$, i.e.\ $g$ totally split, we have that $\rdset{g} = p$.
Also the probability that $g$ totally split is $\beta\pa*{d+1, d+1}$.
Therefore, we use \eqref{eq:aba:beta-const-bound} to get that
\begin{equation*}
\begin{aligned}
\log_p \beta\pa*{d + 1, d} &= \log_p \pa*{p \beta\pa*{d + 1, d + 1}} \\
  &> 1 - \frac{p\pa*{d+1}^2}{2\pa*{p-1}} - C_0 \pa*{d + 1}\log \pa*{d + 1}.
\end{aligned}
\end{equation*}
So there exists a constant $C_1 > 0$ such that
\begin{equation}
\label{eq:aba:beta-dd1-lower-bound}
\log_p \beta\pa*{d + 1, d}
  > - \frac{p d^2}{2\pa*{p-1}} - C_1 d \log d.
\end{equation}

Next, let $n > d + 1$.
We look at \eqref{eq:pre:beta_in_alpha_terms} and omit all terms except the summand where $s=d$ and $r=d+1$:
\begin{equation*}
\beta\pa*{n, d} > \pa*{p-1} p^{-\binom{d + 2}{2}} p^d \alpha\pa*{d, d}.
\end{equation*}
Taking $\log_p$ in both sides and using \eqref{eq:aba:alpha-const-bound} gives
\begin{equation*}
\log_p \beta\pa*{n, d}
  > \log_p \pa*{p-1} - \binom{d + 2}{2} + d - \frac{d^2}{2\pa*{p-1}} - C_0 d \log d.
\end{equation*}
Since $\binom{d+2}{2} = d^2 / 2 + O\pa*{d}$, then
\begin{equation}
\label{eq:aba:beta-nd-lower-bound-restricted}
\log_p \beta\pa*{n, d}
  > - \frac{p d^2}{2\pa*{p-1}} - C_2 d \log d,
\end{equation}
for some constant $C_2 > 0$.
From \eqref{eq:aba:beta-dd-lower-bound}, \eqref{eq:aba:beta-dd1-lower-bound} and \eqref{eq:aba:beta-nd-lower-bound-restricted}, there exists $C_3 > 0$ such that for all $n\ge d$
\begin{equation}
\label{eq:aba:beta-nd-lower-bound-log}
\log_p \beta\pa*{n, d}
  > - \frac{p d^2}{2\pa*{p-1}} - C_3 d \log d.
\end{equation}
We also define the constant $B_d$ by
\begin{equation*}
\log_p B_d = - \frac{p d^2}{2\pa*{p-1}} - C_3 d \log d.
\end{equation*}
So applying exponent with base $p$ on both sides of \eqref{eq:aba:beta-nd-lower-bound-log} gives
\begin{equation}
\label{eq:aba:beta-nd-lower-bound}
\beta\pa*{n, d}
  > B_d.
\end{equation}

Next we bound the values of $\alpha\pa*{n, d}$.
We change the order of summation in \eqref{eq:pre:alpha_in_beta_terms}, so
\begin{equation*}
\alpha\pa*{n, d} = p^{-n} \sum_{d_0 + \dots + d_{p-1} = d} \; \sum_{\bar f \in \FF_p\br*{X}_n^1} \; \prod_{r=0}^{p-1} \beta\pa*{n_r, d_r}
\end{equation*}
As before, if $n_r < d_r$ for some $r$ then $\beta\pa*{n_r, d_r} = 0$ and the product eliminates.
Therefore,
\begin{equation*}
\alpha\pa*{n, d} = p^{-n} \sum_{d_0 + \dots + d_{p-1} = d} \; \sum_{\substack{\bar f \in \FF_p\br*{X}_n^1 \\ n_r \ge d_r}} \; \prod_{r=0}^{p-1} \beta\pa*{n_r, d_r}.
\end{equation*}
Using \eqref{eq:aba:beta-nd-lower-bound} gives
\begin{equation*}
\begin{aligned}
\alpha\pa*{n, d}
  &> p^{-n} \sum_{d_0 + \dots + d_{p-1} = d} \; \sum_{\substack{\bar f \in \FF_p\br*{X}_n^1 \\ n_r \ge d_r}} \; \prod_{r=0}^{p-1} B_{d_r} \\
  &= p^{-n} \sum_{d_0 + \dots + d_{p-1} = d} \#\set*{\bar f \in \FF_p\br*{X}_n^1 : n_r \ge d_r} \prod_{r=0}^{p-1} B_{d_r}.
\end{aligned}
\end{equation*}
We consider the set $\set*{\bar f \in \FF_p\br*{X}_n^1 : n_r \ge d_r}$.
This set is the set of all polynomials $\bar f \in \FF_p\br*{X}_n^1$ such that $\prod_{r=0}^{p-1} \pa*{X-r}^{d_r} \mid \bar f$.
Hence, its size is $p^{n-d}$ and the last equation become
\begin{equation}
\label{eq:aba:alpha-lower-bound-2}
\alpha\pa*{n, d} > p^{-d} \sum_{d_0 + \dots + d_{p-1} = d} \; \prod_{r=0}^{p-1} B_{d_r}
\end{equation}

Note that $\sum_{r=0}^{p-1} \floor*{\pa*{d+r} / p} = d$.
So we omit all terms in the sum of \eqref{eq:aba:alpha-lower-bound-2} except when $d_r = \floor{\pa*{d+r} / p}$ to get
\begin{equation*}
\alpha\pa*{n, d} > p^{-d} \prod_{i=0}^{p-1} B_{\floor*{\pa*{d+i} / p}}.
\end{equation*}
Taking $\log_p$ on both sides gives
\begin{equation*}
\begin{aligned}
\log_p \alpha\pa*{n, d}
  &> -d - \sum_{i=0}^{p-1} \pa*{\frac{p}{2\pa*{p-1}} \floor*{\frac{d+i}{p}}^2 + C_3 \floor*{\frac{d+i}{p}} \log \floor*{\frac{d+i}{p}}} \\
  &\ge -d - \sum_{i=0}^{p-1} \frac{p}{2\pa*{p-1}} \pa*{\frac{d+i}{p}}^2 - C_3 \sum_{i=0}^{p-1}  \pa*{\frac{d+i}{p}} \log \pa*{\frac{d+i}{p}} \\
  &\ge - \frac{p^2}{2\pa*{p-1}} \pa*{\frac{d+p}{p}}^2 - C_4  d \log d.
\end{aligned}
\end{equation*}
for some constants $C_4>0$.
Since $\pa*{\frac{d+p}{p}}^2 = d^2 / p^2 + O\pa*{d}$, there exists a constant $C_5 > 0$ such that
\begin{equation}
\label{eq:aba:alpha-lower-bound-log}
\log_p \alpha\pa*{n, d}
  \ge - \frac{d^2}{2\pa*{p-1}} - C_5  d \log d.
\end{equation}
The inequalities \eqref{eq:aba:alpha-lower-bound-log} and \eqref{eq:aba:beta-nd-lower-bound-log} gives the required lower bounds for the proof.
\end{proof}

We define the values $\gamma\pa*{d}$ using the power series equality \eqref{eq:gamma-def}.
From this definition we have that:
\begin{equation}
\label{eq:gamma-fromula}
\gamma\pa*{d} = \sum_{d_1 + \dots + d_{p-1} = d} \; \prod_{r=1}^{p-1} \beta\pa*{d_r},
\end{equation}
where the sum runs on all non-negative integers $d_1,\dots,d_{p-1}$ such that $d_1 + \dots + d_{p-1} = d$.
Finally, we give an asymptotic estimate for the values of $\gamma\pa*{d}$.

\begin{lem}
\label{gamma-estimate}
We have that
\begin{equation*}
  \log_p\gamma\pa*{d} = -\frac{p d^2}{2\pa*{p-1}^2} + O\pa*{d \log d},
\end{equation*}
when $d \to \infty$.
\end{lem}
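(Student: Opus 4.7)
The plan is to use the explicit formula
\[
\gamma\pa*{d} = \sum_{d_1 + \dots + d_{p-1} = d} \prod_{r=1}^{p-1} \beta\pa*{d_r}
\]
from \eqref{eq:gamma-fromula} together with the asymptotic estimate for $\beta$ from \autoref{alpha-beta-asymptotic}, namely $\log_p \beta\pa*{d_r} = -\frac{p d_r^2}{2\pa*{p-1}} + O\pa*{d_r \log d_r}$. Substituting this into each summand shows that the behaviour of $\gamma\pa*{d}$ is governed, up to an $O\pa*{d\log d}$ error that is uniform in the $d_r$, by the quadratic form $Q\pa*{d_1,\dots,d_{p-1}} = \sum_{r=1}^{p-1} d_r^2$ subject to $\sum d_r = d$.

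For the upper bound, I would estimate each product by
\[
\log_p \prod_{r=1}^{p-1} \beta\pa*{d_r} = -\frac{p}{2\pa*{p-1}} \sum_{r=1}^{p-1} d_r^2 + O\pa*{d \log d},
\]
and then apply the Cauchy--Schwarz inequality $\sum_{r=1}^{p-1} d_r^2 \ge \frac{1}{p-1}\pa*{\sum d_r}^2 = \frac{d^2}{p-1}$ to conclude that every summand is at most $p^{-pd^2/(2\pa*{p-1}^2) + O\pa*{d\log d}}$. Since the number of compositions of $d$ into $p-1$ non-negative parts is $\binom{d+p-2}{p-2} = O\pa*{d^{p-1}}$, summing absorbs into the $O\pa*{d\log d}$ error and yields $\log_p \gamma\pa*{d} \le -\frac{p d^2}{2\pa*{p-1}^2} + O\pa*{d \log d}$.

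For the lower bound, I would select a single composition that makes the $d_r$ as equal as possible, for instance $d_r = \lfloor \pa*{d+r-1}/\pa*{p-1}\rfloor$ adjusted so that $\sum d_r = d$. This choice satisfies $d_r = d/\pa*{p-1} + O\pa*{1}$, so $\sum d_r^2 = \frac{d^2}{p-1} + O\pa*{d}$, and the corresponding summand alone gives
\[
\gamma\pa*{d} \ge \prod_{r=1}^{p-1} \beta\pa*{d_r} = p^{-p d^2/(2\pa*{p-1}^2) + O\pa*{d \log d}},
\]
because the number of summands in $\gamma\pa*{d}$ is non-negative and each $\beta\pa*{d_r}$ is positive by \autoref{alpha-beta-asymptotic}. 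Combining the two bounds gives the claim.

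The only place requiring any care is ensuring the $O\pa*{d \log d}$ error terms arising from the individual $\log_p \beta\pa*{d_r}$ combine uniformly over all compositions. This is straightforward because each $d_r \le d$, so $d_r \log d_r \le d_r \log d$, and summing gives $\sum d_r \log d_r \le d \log d$; thus no obstacle arises beyond bookkeeping of constants.
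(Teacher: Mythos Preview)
Your proposal is correct and follows essentially the same route as the paper: the paper also splits into an upper bound via Cauchy--Schwarz on $\sum d_r^2$ (absorbing the number of compositions into the $O(d\log d)$ term) and a lower bound coming from the single balanced composition $d_r=\lfloor (d+r-1)/(p-1)\rfloor$. The only cosmetic differences are that the paper bounds the number of compositions more crudely by $p^d$ and does not make your uniformity remark explicit; otherwise the arguments coincide.
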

\begin{proof}
By \autoref{alpha-beta-asymptotic}, there exists $C_0 > 0$ such that for all $d \ge 0$ we have
\begin{equation}
\label{eq:ge:beta-const-bound}
\abs*{\log_p \beta\pa*{d} + \frac{pd^2}{2\pa*{p-1}}} < C_0 d \log d.
\end{equation}
We continue our proof by bounding $\log_p\gamma\pa*{d}$ from both sides.
\subsubsection*{Upper Bound:}
We look at the product of \eqref{eq:gamma-fromula} for some non-negative integers $d_1,\dots,d_{p-1}$ such that $d_1 + \dots + d_{p-1} = d$, so by \eqref{eq:ge:beta-const-bound} we infer
\begin{equation*}
\begin{aligned}
\log_p \prod_{r=1}^{p-1} \beta\pa*{d_r}
  &< \sum_{r=1}^{p-1} \pa*{- \frac{pd_r^2}{2\pa*{p-1}} + C_0 d_r \log d_r} \\
  &\le -\frac{p}{2\pa*{p-1}} \sum_{r=1}^{p-1} d_r^2 + C_0 \sum_{r=1}^{p-1}d_r \log d.
\end{aligned}
\end{equation*}
By Cauchy-Schwarz inequality, $d^2 = \pa*{\sum_{r=1}^{p-1} d_r}^2 \le \pa*{p-1} \sum_{r=0}^{p-1} d_r^2$.
Hence,
\begin{equation}
\label{eq:ge:log-prod-upper-bound-2}
\log_p \prod_{r=1}^{p-1} \beta\pa*{d_r} < - \frac{p d^2}{2 \pa*{p-1}^2} + C_0 d \log d.
\end{equation}
Define the constant $G_d$ by
\begin{equation*}
\log_p G_d = - \frac{p d^2}{2 \pa*{p-1}^2} + C_0 d \log d.
\end{equation*}

We plug \eqref{eq:ge:log-prod-upper-bound-2} into \eqref{eq:gamma-fromula} to obtain
\begin{equation*}
\gamma\pa*{d}
  < \sum_{d_1+\dots+d_{p-1}=d} G_{d}
  = \binom{d + p-1}{p-1} G_d \le p^d G_d.
\end{equation*}
We take $\log_p$ on both sides of the inequality, so
\begin{equation*}
\begin{aligned}
\log_p \gamma\pa*{d}
  &< d - \frac{p d^2}{2 \pa*{p-1}^2} + C_0 d \log d \\
  &< - \frac{p d^2}{2 \pa*{p-1}^2} + C_1 d \log d,
\end{aligned}
\end{equation*}
for some $C_1 > 0$.
And the last inequality gives the required upper bound.

\subsubsection*{Lower Bound:}
We take a look on \eqref{eq:gamma-fromula}.
Note that $\sum_{r=1}^{p-1} \floor*{\pa*{d+r-1}/\pa*{p-1}} = d$, hence
\begin{equation*}
\gamma\pa*{d} \ge \prod_{r=1}^{p-1} \beta\pa*{\floor*{\frac{d + r - 1}{p-1}}}.
\end{equation*}
Taking $\log_p$ on both sides gives
\begin{equation}
\label{eq:ge:gamma-log-lower-bound}
\log_p \gamma\pa*{d} \ge \sum_{r=1}^{p-1} \log_p \beta\pa*{\floor*{\frac{d + r - 1}{p-1}}}.
\end{equation}
We use \eqref{eq:ge:beta-const-bound} in \eqref{eq:ge:gamma-log-lower-bound} to get
\begin{equation*}
\begin{aligned}
\log_p \gamma\pa*{d}
&\ge - \sum_{r=1}^{p-1} \pa*{\frac{p}{2 \pa*{p-1}} \floor*{\frac{d + r - 1}{p-1}}^2 + C_0 \floor*{\frac{d + r - 1}{p-1}} \log \floor*{\frac{d + r - 1}{p-1}}} \\
&\ge - \sum_{r=1}^{p-1} \pa*{\frac{p}{2 \pa*{p-1}} \pa*{\frac{d + r - 1}{p-1}}^2 + C_0 \pa*{\frac{d + r - 1}{p-1}} \log \pa*{\frac{d + r - 1}{p-1}}} \\
&\ge - \sum_{r=1}^{p-1} \frac{p}{2 \pa*{p-1}} \pa*{\frac{d + p}{p-1}}^2 + C_2 d \log d \\
&\ge - \frac{p}{2 \pa*{p-1}^2}d^2 + C_3 d \log d,
\end{aligned}
\end{equation*}
for some constants $C_2,C_3 > 0$.
And the lower bound is shown as needed.
\end{proof}

\subsection{Random walks}
\label{sec:rnd-walks}

Let $k, m$ be positive integers and let $\rv_{0},\dots,\rv_{n-1}$ be random variables satisfying \autoref{main-assumption}.
Set $V=\pa*{\rmod{p^k}}^m$, for some vectors $\v_0,\v_1,\dots,\v_n$ in $V$, we construct the random walk over the additive group $\pa*{V,+}$ whose $n$-th step is $\sum_{i=0}^n \rv_i \v_i$ (we set $\rv_n = 1$ for convenience).

For two vectors $\u,\w\in V$, we denote by $\ang*{\u,\w}$ the formal dot product i.e.
\[
\ang*{\u,\w} = u_1 w_1 + \dots + u_m w_m.
\]
For a non-zero vector $\u\in V$, we call the number of vectors in $\v_0,\dots,\v_n$ such that $\ang*{\u, \v_i} \ne 0$, the \emph{$\u$-weight of $\v_0,\dots,\v_n$}, and we denote it by $\weight_\u\pa*{\v_0,\dots,\v_n}$.
We define the \emph{minimal weight of $\v_{0},\dots,\v_n$}
to be
\[
\sigma\pa*{\v_0, \dots, \v_n}
  = \min_{\u\in V\setminus\set{\vze}} \weight_\u \pa*{\v_0,\dots,\v_n}.
\]

The relation between $\tau$ from \autoref{main-assumption}, $\sigma$ and the $n$-step of the random walk is:
\begin{prop}
\label{prop:zero-random-walk}
For any $S \subseteq V$. We have:
\[
\Pr\pa*{\sum_{i=0}^n\rv_i \v_i \in S} = \frac{\#S}{\#V} + O\pa*{\#S \exp\pa*{-\frac{\tau\sigma\pa*{\v_0,\dots,\v_n}}{p^{2k}}}},
\]
as $n\to \infty$.
\end{prop}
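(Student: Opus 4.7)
The plan is to Fourier-analyse the random walk on the finite abelian group $V=\pa*{\rmod{p^k}}^m$, whose dual is $\set*{\chi_\u:\u\in V}$ with $\chi_\u\pa*{\v}=\exp\pa*{2\pi i \ang*{\u,\v}/p^k}$. Expanding
\[
\Pr\pa*{\sum_{i=0}^n \rv_i\v_i=\w} = \frac{1}{\#V}\sum_{\u\in V}\chi_\u\pa*{\v_n-\w}\prod_{i=0}^{n-1}\Ex\br*{\chi_\u\pa*{\rv_i\v_i}}
\]
by Fourier inversion and independence of the $\rv_i$, the $\u=\vze$ term equals $1/\#V$ and sums to $\#S/\#V$ over $\w\in S$; the proof reduces to bounding the contribution of $\u\ne\vze$ by $O\pa*{\#S\exp\pa*{-\tau\sigma/p^{2k}}}$.

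The core step is a quantitative single-character bound: whenever $\u\ne\vze$ and $c:=\ang*{\u,\v_i}\ne 0$ in $\rmod{p^k}$,
\[
\abs*{\Ex\br*{\chi_\u\pa*{\rv_i\v_i}}}^{2} = \Ex\br*{\cos\pa*{2\pi c\pa*{\rv_i-\rv_i'}/p^k}} \le 1-8\tau/p^{2k},
\]
where $\rv_i'$ is an independent copy of $\rv_i$ (the first identity is the standard symmetrisation $\abs*{\Ex\br*{Z}}^2=\Ex\br*{Z\bar Z'}$). Writing $c=p^j c'$ with $p\nmid c'$ and $0\le j<k$, on the event $\set*{p\nmid \rv_i-\rv_i'}$ the number $c\pa*{\rv_i-\rv_i'}/p^k$ reduces modulo $1$ to an irreducible fraction with denominator $p^{k-j}$, so it lies at distance at least $p^{j-k}\ge p^{-k}$ from $\ZZ$. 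Combining this with the elementary estimate $1-\cos\pa*{2\pi x}\ge 8\,\mathrm{dist}\pa*{x,\ZZ}^2$ yields $1-\cos\pa*{2\pi c\pa*{\rv_i-\rv_i'}/p^k}\ge 8 p^{-2k}$ on this event, and \autoref{main-assumption} gives $\Pr\pa*{p\nmid \rv_i-\rv_i'}=1-\sum_{\bar x}\Pr\pa*{\rv_i\equiv\bar x}^{2}\ge \tau$, proving the claim.

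It follows that for each $\u\ne\vze$ the product $\prod_{i=0}^{n-1}\abs*{\Ex\br*{\chi_\u\pa*{\rv_i\v_i}}}$ is bounded by $\pa*{1-4\tau p^{-2k}}^{N_\u}$, where $N_\u$ counts the indices $1\le i\le n-1$ with $\ang*{\u,\v_i}\ne 0$. Since $N_\u$ differs from $\weight_\u\pa*{\v_0,\dots,\v_n}\ge \sigma$ by at most $2$ (for the possibly missed indices $i=0$ and $i=n$, where no damping is available because \autoref{main-assumption} excludes $\rv_0$ and $\rv_n=1$ is deterministic), the product is at most $\exp\pa*{-C\tau\sigma/p^{2k}}$ for some absolute $C>0$. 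Using the trivial estimate $\abs*{\sum_{\w\in S}\chi_\u\pa*{\v_n-\w}}\le \#S$ and summing over the at most $\#V$ choices of $\u\ne\vze$ cancels the $1/\#V$ prefactor and gives the required bound. The main obstacle is the single-step character estimate: the hypothesis supplies only mod-$p$ information about $\rv_i$, yet the bound must be uniform over all non-zero $c\in\rmod{p^k}$, including those with $p$-adic valuation as large as $k-1$, which forces us to use the worst-case distance $p^{-k}$ from $\ZZ$ and accounts for the factor $p^{-2k}$ in the exponent.
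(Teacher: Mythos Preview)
Your proposal is correct and follows essentially the same route as the paper: Fourier inversion on $V$, a single-step character bound of the form $\abs*{\Ex\br*{\chi_\u\pa*{\rv_i\v_i}}}\le\exp\pa*{-c\tau/p^{2k}}$ whenever $\ang*{\u,\v_i}\ne 0$ (the paper proves this as a separate lemma via the estimate $\Re\pa*{\zeta^{\bar t}}\le 1-2L\pa*{\bar t}^2/p^{2k}$, you use the equivalent $1-\cos\pa*{2\pi x}\ge 8\,\mathrm{dist}\pa*{x,\ZZ}^2$ together with the symmetrisation $\abs*{\Ex Z}^2=\Ex\br*{Z\bar Z'}$), then multiplication over the indices $1\le i\le n-1$ and a trivial sum over $\u\ne\vze$. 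The handling of the two excluded indices $i=0,n$ and the resulting harmless constant is also the same.
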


To prove this proposition we follow the proof of \cite[Proposition 12]{shmueli2021expected}.
The proof of both propositions is mostly the same except for few adjustments allowing $\rv_0, \dots, \rv_{n-1}$ to have different laws from each other.

Let $\mu_0,\dots,\mu_n$ be the laws of $\rv_0,\dots,\rv_n$ and $\nu$ be the law of $\sum_{i=0}^{n}\rv_i \v_i$.
For each index $i$ and positive integer $k$, let $\mu_i^{\pa{k}}$ be the pushforward of $\mu_i$ to $\rmod{p^k}$ and for $k=1$ we also denote $\mu_i'=\mu_i^{\pa{1}}$.
Those measures satisfy the following
\begin{equation*}
  \mu_i^{\pa{k}}\pa*{\bar{x}}=\mu_i\pa*{\bar{x}+p^k\ZZ_p}
  \qquad\text{and}\qquad
  \mu_i'\pa*{\bar{x}}=\mu_i\pa*{\bar{x}+p\ZZ_p}.
\end{equation*}
From \autoref{main-assumption}, we have that $1 - \sum_{\bar{x}\in \rmod{p}} \mu_i'\pa*{\bar{x}}^2 > \tau$ for all $0 < i < n$.

Let $\delta_{\w}$ be the Dirac measure on $V$, i.e.
\begin{equation}
  \label{eq:dirac-delta}
  \delta_\w\pa*{\u} = \begin{cases}
    1, &\u = \w ,\\
    0, &\u \ne \w.
  \end{cases}
\end{equation}
We write $\mu_i.\delta_{\w}$ for the following probability measure on $V$:
\begin{equation}
  \label{eq:small-conv}
  \mu_i.\delta_\w\pa*{\cdot}=\sum_{\bar{x}\in\rmod{p^k}}\mu_i^{\pa{k}}\pa*{\bar{x}}\delta_{x\w}\pa*{\cdot}.
\end{equation}
With this notation, we can write:
\begin{equation}
  \label{eq:random-walk-conv}
  \nu=\mu_0.\delta_{\v_0}\ast\mu_1.\delta_{\v_1}\ast\dots\ast\mu_n.\delta_{\v_n}.
\end{equation}
where $*$ is the convolution operator.

In this section we denote the Fourier transform by $\widehat{\cdot}$
and we let $\zeta$ be a primitive $p^k$-th root of unity.
So for any function $f\colon V \to \CC$ we have the following relations
\begin{align}
  \hat{f}\pa*{\u}&=\sum_{\w\in V}f\pa*{\w}\zeta^{-\ang*{ \u,\w} }\quad\text{and}\label{eq:fourier-trasform} \\ f\pa*{\u}&=\frac{1}{\#V}\sum_{\w\in V}\hat{f}\pa*{\w}\zeta^{\ang*{ \u,\w} }\text{.}\label{eq:inv-fourier-trasform}
\end{align}

The following lemma and its proof are based on \cite[Lemma 13]{shmueli2021expected} and \cite[lemma 31]{breuillard2019irreducibility}.
\begin{lem}
  \label{lem:1-step-fourier-bound}
  Let $\u,\w\in V$ and integer $0 < i < n$.
  If $\ang*{\u,\w} \ne0$ then
  \[
  \abs*{\widehat{\mu_i.\delta_\w}\pa*{\u}} < \exp\pa*{-\frac{\tau}{p^{2k}}}.
  \]
\end{lem}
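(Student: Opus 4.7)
The plan is to compute the Fourier transform by unfolding the definitions. Combining \eqref{eq:small-conv} and \eqref{eq:fourier-trasform} and exchanging the order of summation yields $\widehat{\mu_i.\delta_\w}\pa*{\u} = \sum_{x \in \rmod{p^k}} \mu_i^{\pa{k}}\pa*{x}\, \zeta^{-a x}$, where I abbreviate $a = \ang*{\u,\w} \in \rmod{p^k}$; by hypothesis $a \ne 0$. Writing $S$ for this sum, the bound $\abs*{S} < \exp(-\tau/p^{2k})$ follows from $\abs*{S}^2 < \exp(-2\tau/p^{2k})$, which I would establish via a standard ``variance'' expansion.

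Expanding $\abs*{S}^2 = \sum_{x,y} \mu_i^{\pa{k}}(x)\mu_i^{\pa{k}}(y)\zeta^{-a(x-y)}$ and subtracting the total-mass identity $1 = \sum_{x,y} \mu_i^{\pa{k}}(x)\mu_i^{\pa{k}}(y)$, taking real parts gives $1 - \abs*{S}^2 = \sum_{x,y} \mu_i^{\pa{k}}(x)\mu_i^{\pa{k}}(y)\pa*{1 - \cos(2\pi a(x-y)/p^k)}$. Every summand is nonnegative, so restricting the sum to pairs with $x \not\equiv y \pmod{p}$ only decreases it; I will lower-bound the restricted sum by factoring it into a marginal-mass piece and a uniform cosine piece.

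The restricted marginal is controlled by \autoref{main-assumption} after pushforward modulo $p$: $\sum_{x \not\equiv y \bmod p} \mu_i^{\pa{k}}(x)\mu_i^{\pa{k}}(y) = 1 - \sum_{\bar r \in \rmod{p}} \mu_i'(\bar r)^2 > \tau$. For the cosine factor the key observation is: when $v_p(x - y) = 0$ and $a$ is nonzero in $\rmod{p^k}$, the product $a(x - y)$ is also nonzero in $\rmod{p^k}$. Taking its representative in $[-p^k/2,\, p^k/2)$, its absolute value is at least $1$, so $\abs*{2\pi a(x-y)/p^k}$ lies in $[2\pi/p^k,\, \pi]$. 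The elementary inequality $1 - \cos\theta \ge 2\theta^2/\pi^2$, valid on $[-\pi,\pi]$, then gives the uniform lower bound $1 - \cos(2\pi a(x-y)/p^k) \ge 8/p^{2k}$.

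Combining the two estimates yields $1 - \abs*{S}^2 \ge 8\tau/p^{2k}$. Since $1 - e^{-x} < x$ for $x > 0$, one has $1 - \exp(-2\tau/p^{2k}) < 2\tau/p^{2k} < 8\tau/p^{2k} \le 1 - \abs*{S}^2$, so $\abs*{S}^2 < \exp(-2\tau/p^{2k})$ and the lemma follows. The only subtle step is the uniform cosine lower bound: it crucially requires $v_p(x - y) = 0$---a weaker restriction such as $x \ne y$ would allow $a(x-y)$ to vanish modulo $p^k$---which is exactly why \autoref{main-assumption} is phrased in terms of the reduction modulo~$p$ and why we discard the diagonal-mod-$p$ pairs before invoking it.
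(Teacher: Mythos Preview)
Your argument is correct and follows essentially the same route as the paper's proof: expand $\abs{S}^2$, take real parts, restrict to pairs with $x\not\equiv y\pmod p$, bound the cosine term via the centered lift of $a(x-y)$, and invoke \autoref{main-assumption} for the marginal. The only cosmetic differences are that the paper uses the slightly weaker inequality $\Re(\zeta^{\bar t})\le 1-2\lift{\bar t}^2/p^{2k}$ (yielding $1-\abs{S}^2>2\tau/p^{2k}$ instead of your $8\tau/p^{2k}$) and then passes to the exponential via $1-t\le e^{-t}$ directly, rather than your equivalent $1-e^{-x}<x$ detour.
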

\begin{proof}
  By direct computation using \eqref{eq:fourier-trasform} and \eqref{eq:small-conv} we get
  \begin{align*}
    \abs*{\widehat{\mu_i.\delta_\w}\pa*{\u}}^2
      &= \sum_{\x,\y\in V}\mu_i.\delta_\w\pa*{\x}\mu_i.\delta_\w\pa*{\y}\zeta^{\ang*{ \x-\y,\u} }\\
      &= \sum_{\x,\y\in V} \sum_{\bar{x},\bar{y}\in\rmod{p^k}} \mu_i^{\pa{k}}\pa*{\bar{x}}\delta_{\bar{x}\w}\pa*{\x} \mu_i^{\pa{k}}\pa*{\bar{y}}\delta_{\bar{y}\w}\pa*{\y} \zeta^{\ang*{\x-\y,\u}}.
  \end{align*}
  Then from \eqref{eq:dirac-delta}
  \begin{equation*}
    \abs*{\widehat{\mu_i.\delta_{\w}}\pa*{\u}}^2
      = \sum_{\bar{x},\bar{y}\in\rmod{p^k}} \mu_i^{\pa{k}}\pa*{\bar{x}} \mu_i^{\pa{k}}\pa*{\bar{y}} \zeta^{\pa*{\bar{x}-\bar{y}}\ang*{\w,\u}}.
  \end{equation*}
  We denote by $\lift{\bar{t}}$ the lift of $\bar{t} \in \rmod{p^k}$
  to the interval $\left(-\frac{p^k}{2},\frac{p^k}{2}\right]\cap\ZZ$.
  Since $\abs*{\widehat{\mu_i.\delta_\w}\pa*{\u}}^2\in\RR$ and
  $\Re\pa*{\zeta^{\bar{t}}}\le1-2\lift{\bar{t}}^2/p^{2k}$, we get
  \begin{align*}
    \abs*{\widehat{\mu_i.\delta_\w}\pa*{\u}}^2
    &= \sum_{\bar{x},\bar{y}\in\rmod{p^k}} \mu_i^{\pa{k}}\pa*{\bar{x}} \mu_i^{\pa{k}}\pa*{\bar{y}} \Re\pa*{\zeta^{\pa*{\bar{x}-\bar{y}}\ang*{\w,\u} }}\\
    & \le\sum_{\bar{x},\bar{y}\in\rmod{p^k}} \mu_i^{\pa{k}}\pa*{\bar{x}} \mu_i^{\pa{k}}\pa*{\bar{y}} \pa*{1-\frac{2\lift{\pa*{\bar{x}-\bar{y}}\ang*{\w,\u}}^2}{p^{2k}}}\\
    & =1-\frac{2}{p^{2k}} \sum_{\bar{x},\bar{y}\in\rmod{p^k}} \mu_i^{\pa{k}}\pa*{\bar{x}} \mu_i^{\pa{k}}\pa*{\bar{y}} \lift{\pa*{\bar{x}-\bar{y}}\ang*{ \w,\u}}^2.
  \end{align*}
  If $p\nmid \bar{x}-\bar{y}$ then $\pa*{\bar{x}-\bar{y}}\ang*{\w,\u}$ is non-zero. So
  \begin{equation*}
    \lift{\pa*{\bar{x}-\bar{y}}\ang*{\w,\u}}^2 \ge 1,
  \end{equation*}
  hence
  \begin{equation}
    \abs*{\widehat{\mu.\delta_{\w}}\pa*{\u}}^{2}
    \le 1 - \frac{2}{q^{2}} \sum_{\substack{
        \bar{x},\bar{y}\in\rmod{p^k} \\
        p\nmid \bar{x}-\bar{y}
      }
    } \mu_i^{\pa{k}}\pa*{\bar{x}} \mu_i^{\pa{k}}\pa*{\bar{y}}
    \label{eq:1-step-fourier-bound-1}\text{.}
  \end{equation}

  Since $\mu_i'$ is also the pushforward measure of $\mu_i^{\pa{k}}$, we have
  \begin{equation*}
    \mu_i' \pa*{\bar{x}'} = \sum_{\substack{
        x\in\rmod{p^k} \\
        \bar{x}'\equiv\bar{x}\pmod{p}}
      } \mu_i^{\pa{k}} \pa*{\bar{x}}.
  \end{equation*}
  Hence
  \begin{equation*}
    \sum_{\substack{
        \bar{x},\bar{y}\in\rmod{p^k} \\
        p\nmid \bar{x}-\bar{y}
      }
    } \mu_i^{\pa{k}}\pa*{\bar{x}} \mu_i^{\pa{k}}\pa*{\bar{y}}
    = \sum_{\substack{
          \bar{x}',\bar{y}'\in\rmod{p} \\
          \bar{x}'\ne \bar{y}'
        }
      } \mu_i'\pa*{\bar{x}'}\mu_i'\pa*{\bar{y}'}.
  \end{equation*}
  By direct computation
  \begin{align*}
    \sum_{\substack{
        \bar{x},\bar{y}\in\rmod{p^k} \\
        p\nmid \bar{x}-\bar{y}
      }
    } \mu_i^{\pa{k}}\pa*{\bar{x}} \mu_i^{\pa{k}}\pa*{\bar{y}}
    &= \sum_{\bar{x}',\bar{y}'\in\rmod{p}} \mu_i'\pa*{\bar{x}'}\mu_i'\pa*{\bar{y}'} - \sum_{\bar{x}'\in\rmod{p}} \mu_i'\pa*{\bar{x}'}^2 \\
    & =\pa*{\sum_{\bar{x}'\in\rmod{p}}\mu_i'\pa*{\bar{x}'}}^2 - \sum_{\bar{x}'\in\rmod{p}} \mu_i'\pa*{\bar{x}'}^2\\
    & =1-\sum_{\bar{x}'\in\rmod{p}} \mu_i'\pa*{\bar{x}'}^2 > \tau.
  \end{align*}
  Plugging this into \eqref{eq:1-step-fourier-bound-1} and using the inequality $1-t\le\exp\pa*{-t}$, we get
  \begin{align*}
    \abs*{\widehat{\mu.\delta_{\w}}\pa*{\u}}^2
    < 1-\frac{2\tau}{q^{2}} \le \exp\pa*{-\frac{2\tau}{q^{2}}}
    \text{.}
  \end{align*}
  We finish the proof by taking square root on both sides of the inequality.
\end{proof}
\begin{lem}
  \label{lem:fourier-bound}
  Let $\u\in V\setminus\set*{\vze} $. Then there exists $C_0>0$ such that
  \begin{equation*}
    \abs*{\hat{\nu}\pa*{\u}} < C_0 \exp\pa*{-\frac{\tau\weight_\u\pa*{\v_0,\dots,\v_n}}{p^{2k}}}.
  \end{equation*}
\end{lem}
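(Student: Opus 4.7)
The plan is to convert the convolution structure of $\nu$ into a product by Fourier transform, and then bound each factor. Recall from \eqref{eq:random-walk-conv} that $\nu = \mu_0.\delta_{\v_0} \ast \mu_1.\delta_{\v_1} \ast \cdots \ast \mu_n.\delta_{\v_n}$, so
\[
\hat{\nu}(\u) = \prod_{i=0}^{n} \widehat{\mu_i.\delta_{\v_i}}(\u).
\]
Taking absolute values, I would estimate each factor separately according to which regime the index $i$ falls in.

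For $0 < i < n$, the variables $\rv_i$ satisfy the hypothesis of \autoref{lem:1-step-fourier-bound}, so whenever $\langle \u, \v_i \rangle \ne 0$ I get $|\widehat{\mu_i.\delta_{\v_i}}(\u)| < \exp(-\tau / p^{2k})$. When $\langle \u, \v_i \rangle = 0$, a direct computation from \eqref{eq:fourier-trasform} and \eqref{eq:small-conv} shows $\widehat{\mu_i.\delta_{\v_i}}(\u) = \sum_{\bar{x}} \mu_i^{(k)}(\bar{x}) = 1$, so that factor is harmless. Multiplying the contributions from the interior indices therefore gives
\[
\prod_{\substack{0 < i < n}} \abs*{\widehat{\mu_i.\delta_{\v_i}}(\u)} < \exp\pa*{-\frac{\tau W'}{p^{2k}}},
\]
where $W'$ denotes the number of interior indices $i$ with $\langle \u, \v_i \rangle \ne 0$.

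The main (minor) obstacle is that $\weight_\u(\v_0, \dots, \v_n)$ ranges over all indices $i = 0, 1, \dots, n$, whereas \autoref{lem:1-step-fourier-bound} applies only to $0 < i < n$. The boundary indices $i = 0$ and $i = n$ are not controlled by \autoref{main-assumption} ($\rv_n = 1$ deterministically, and $\rv_0$ is unrestricted), but since each $\mu_i.\delta_{\v_i}$ is a probability measure, we have the trivial bound $|\widehat{\mu_i.\delta_{\v_i}}(\u)| \le 1$ for those two indices. This means $W - W' \le 2$, where $W = \weight_\u(\v_0, \dots, \v_n)$. Consequently
\[
|\hat{\nu}(\u)| \le \exp\pa*{-\frac{\tau W'}{p^{2k}}} \le \exp\pa*{\frac{2\tau}{p^{2k}}} \exp\pa*{-\frac{\tau W}{p^{2k}}},
\]
and the proof is completed by taking $C_0 = \exp(2\tau / p^{2k})$ (or simply $C_0 = e^2$, since $\tau < 1$). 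No real technical difficulty arises; the proof is essentially bookkeeping on top of \autoref{lem:1-step-fourier-bound}.
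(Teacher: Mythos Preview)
Your proof is correct and follows essentially the same route as the paper: turn the convolution into a product via the Fourier transform, apply \autoref{lem:1-step-fourier-bound} to the interior indices with nonzero pairing, use the trivial bound $\le 1$ on the remaining factors, and absorb the two boundary indices into the constant. The paper obtains the slightly sharper $C_0 = e$ (noting $2\tau/p^{2k} < 1$ since $p\ge 2$, $k\ge 1$), and your first suggestion $C_0 = \exp(2\tau/p^{2k})$ would only yield a non-strict inequality, but your fallback $C_0 = e^2$ is perfectly adequate.
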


\begin{proof}
  We define the following set $I\pa*{\u}=\set*{0 < i < n:\ang*{\u,\v_i} \ne0} $, so that
  \begin{equation*}
    \weight_\u\pa*{\v_1,\dots,\v_{n-1}} = \#I\pa*{\u}
  \end{equation*}
  by definition.
  For $i\in I\pa*{\u}$, \autoref{lem:1-step-fourier-bound} infers that
  \begin{equation}
    \label{eq:bound-step-in-set}
    \abs*{\widehat{\mu_i.\delta_{\v_i}}\pa*{\u}}\le\exp\pa*{-\frac{\tau}{p^{2k}}}.
  \end{equation}

  Otherwise, for $i\notin I\pa*{\u}$ we have that
  \begin{equation}
    \label{eq:bound-step-not-in-set}
    \abs*{\widehat{\mu_i.\delta_{\v_i}}\pa*{\u}}
    \le \sum_{\w\in V}\abs*{\mu_i.\delta_{\v_i}\pa*{\w}\zeta^{-\ang*{\u,\w}}} = 1.
  \end{equation}
  By \eqref{eq:random-walk-conv}, \eqref{eq:bound-step-in-set}, \eqref{eq:bound-step-not-in-set} and since the Fourier transform maps convolutions to products we get
  \begin{equation}
  \label{eq:rw:bound-all}
  \begin{aligned}
    \abs*{\hat{\nu}\pa*{\u}}
    &= \abs*{\prod_{i=0}^{n}\widehat{\mu_i.\delta_{\v_i}}\pa*{\u}}\\
    &\le \abs*{\prod_{i\in I\pa*{\u}}\exp\pa*{-\frac{\tau}{p^{2k}}}} \cdot \abs*{\prod_{i\notin I\pa*{\u}}1} \\
    &= \exp\pa*{-\frac{\tau\#I\pa*{\u}}{p^{2k}}} \\
    &= \exp\pa*{-\frac{\tau\weight_\u\pa*{\v_1,\dots,\v_{n-1}}}{p^{2k}}}.
  \end{aligned}
  \end{equation}

  From the definition of $\u$-weight we get the following inequality
  \begin{equation*}
    \weight_\u\pa*{\v_1,\dots,\v_{n-1}} \ge \weight_\u\pa*{\v_0,\dots,\v_n} - 2.
  \end{equation*}
  Plugging this inequality into \eqref{eq:rw:bound-all} gives
  \begin{equation*}
    \abs*{\hat{\nu}\pa*{\u}} \le \exp\pa*{\frac{2\tau}{p^{2k}}} \exp\pa*{-\frac{\tau\weight_\u\pa*{\v_0,\dots,\v_n}}{p^{2k}}}.
  \end{equation*}
  Since $\tau < 1$, $p \ge 2$ and $k\ge1$ we have that $\exp\pa*{2\tau/p^{2k}} < e$ which finish the proof.
\end{proof}

\begin{proof}[Proof of \autoref{prop:zero-random-walk}]
We have that $\nu$ is the law of $\sum_{i=0}^n \rv_i \v_i$, hence
\begin{equation*}
  \Pr\pa*{\sum_{i=0}^n \rv_i \v_i \in S} = \sum_{\u \in S} \nu\pa*{\u}.
\end{equation*}
Using the triangle inequality gives
\begin{equation}
  \label{eq:rw:prob-triangle-bound}
  \abs*{\Pr\pa*{\sum_{i=0}^n \rv_i \v_i \in S} - \frac{\#S}{\#V}}
    \le \sum_{\u\in S} \abs*{\nu\pa*{\u} - \frac{1}{\#V}}
\end{equation}

Since $\nu$ is a probability measure on $V$, we have by \eqref{eq:fourier-trasform} that
\begin{equation*}
  \hat{\nu}\pa*{\vze} = \sum_{\w \in V} \nu\pa*{\w} = 1.
\end{equation*}
Hence, by \eqref{eq:inv-fourier-trasform}
\begin{equation}
  \label{eq:prop-of-zero}
  \nu\pa*{\u}
  = \frac{1}{\#V} \sum_{\w\in V} \hat{\nu}\pa*{\w}\zeta^{\ang*{\u,\w}}
  = \frac{1}{\#V} + \frac{1}{\#V}\sum_{\w\in V\setminus\set*{\vze}} \hat{\nu}\pa*{\w}\zeta^{\ang*{\u,\w}}.
\end{equation}
Therefore, by the triangle inequality and \autoref{lem:fourier-bound}
\begin{align*}
  \abs*{\nu\pa*{\u}-\frac{1}{\#V}}
  &\le \frac{1}{\#V} \sum_{\w\in V\setminus\set*{\vze}} \abs*{\hat{\nu}\pa*{\w}} \\
  &\le \frac{1}{\#V} \sum_{\w\in V\setminus\set*{\vze}} C_0 \exp\pa*{-\frac{\tau\sigma\pa*{\v_0,\dots,\v_n}}{p^{2k}}}\\
  & < C_0 \exp\pa*{-\frac{\tau\sigma\pa*{\v_0,\dots,\v_n}}{p^{2k}}}.
\end{align*}
We finish the proof by using this bound in \eqref{eq:rw:prob-triangle-bound}.
\end{proof}

We write the following lemma to give a lower bound for the minimal weight.
\begin{lem}
\label{lem:rnd-walks:cycle-minimal-weight}
Let $\v_0,\dots,\v_n$ be vectors such that $\v_0 \bmod{p},\dots,\v_n\bmod{p}$ have a cycle $t < n$ then
\[
\pa*{\frac{n}{t} - 1} \sigma\pa*{\v_0 \bmod{p},\dots,\v_{t-1}\bmod{p}} \le \sigma\pa*{\v_0,\dots,\v_n}.
\]
Also, if $\v_0 \bmod{p},\dots,\v_n\bmod{p}$ contains a basis for $\FF_p^m$, then
\[
\frac{n}{t} - 1 \le \sigma\pa*{\v_0,\dots,\v_n}.
\]
\end{lem}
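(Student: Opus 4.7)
The plan is to reduce the bound to the case $k = 1$, i.e.\ to the reduction modulo $p$, and then exploit the $t$-periodicity of the reduced sequence. The first step is the reduction inequality
\[
\sigma\pa*{\v_0,\dots,\v_n} \ge \sigma\pa*{\v_0\bmod p,\dots,\v_n\bmod p}.
\]
Given $\u \in V\setminus\set*{\vze}$, let $\ell$ be the largest integer such that every coordinate of $\u$ is divisible by $p^\ell$. Since $\u\ne\vze$ we have $0\le\ell<k$, so we can write $\u=p^\ell\u'$ with $\u'\not\equiv\vze\pmod p$, and set $\bar\u=\u'\bmod p$, a nonzero vector in $\FF_p^m$. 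Then $\ang*{\u,\v_i}\equiv p^\ell\ang*{\u',\v_i}\pmod{p^k}$, and this vanishes in $\rmod{p^k}$ only if $p^{k-\ell}$ divides $\ang*{\u',\v_i}$. In particular, if $\ang*{\bar\u,\v_i\bmod p}\ne 0$ in $\FF_p$, then $p\nmid\ang*{\u',\v_i}$ and a fortiori $p^{k-\ell}\nmid\ang*{\u',\v_i}$, giving $\ang*{\u,\v_i}\ne 0$. Hence $\weight_\u\pa*{\v_0,\dots,\v_n}\ge\weight_{\bar\u}\pa*{\v_0\bmod p,\dots,\v_n\bmod p}$, and minimizing over $\u$ proves the reduction.

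Next I would exploit periodicity. For any nonzero $\u\in\FF_p^m$, grouping the indices $\set*{0,\dots,n}$ by their residue modulo $t$ and using $\v_i\bmod p=\v_{i\bmod t}\bmod p$ yields
\[
\weight_\u\pa*{\v_0\bmod p,\dots,\v_n\bmod p}=\sum_{j=0}^{t-1}c_j\indic{\ang*{\u,\v_j\bmod p}\ne 0},
\]
where $c_j=\#\set*{0\le i\le n:i\equiv j\pmod t}=\floor*{\pa*{n-j}/t}+1$. A direct computation gives $c_j\ge n/t-1$ for every $j$, hence
\[
\weight_\u\pa*{\v_0\bmod p,\dots,\v_n\bmod p}\ge\pa*{n/t-1}\weight_\u\pa*{\v_0\bmod p,\dots,\v_{t-1}\bmod p}\ge\pa*{n/t-1}\sigma\pa*{\v_0\bmod p,\dots,\v_{t-1}\bmod p}.
\]
Combining with the reduction step and minimizing over $\u$ proves the first inequality.

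For the second assertion, the $t$-periodicity modulo $p$ implies that the sets $\set*{\v_0\bmod p,\dots,\v_n\bmod p}$ and $\set*{\v_0\bmod p,\dots,\v_{t-1}\bmod p}$ have the same support, so the latter also contains a basis of $\FF_p^m$. Then for every nonzero $\u\in\FF_p^m$ some basis vector among $\v_0\bmod p,\dots,\v_{t-1}\bmod p$ has nonzero inner product with $\u$, so $\sigma\pa*{\v_0\bmod p,\dots,\v_{t-1}\bmod p}\ge 1$, and the first inequality gives $n/t-1\le\sigma\pa*{\v_0,\dots,\v_n}$. I do not anticipate any real obstacle; the only delicate step is the $p$-divisibility bookkeeping when passing from $\u\in V$ to $\bar\u\in\FF_p^m$.
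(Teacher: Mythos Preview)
Your proof is correct and follows essentially the same approach as the paper: reduce to $k=1$, exploit $t$-periodicity to count complete periods, and for the second assertion observe that the first $t$ reduced vectors already contain a basis so the mod-$p$ minimal weight over one period is at least $1$. Your reduction step is in fact more careful than the paper's, which simply writes $\weight_\u\ge\weight_{\u\bmod p}$ without addressing the case $\u\equiv\vze\pmod p$; your extraction of $p^\ell$ handles this cleanly.
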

\begin{proof}
We have that
\[
\weight_\u \pa*{\v_0,\dots, \v_n} \ge \weight_{\u\bmod p} \pa*{\v_0\bmod{p},\dots, \v_n\bmod{p}},
\]
for all non-zero $\u \in V$.
So it suffices to prove the lemma for $k=1$ and rest will follow.

The first part is clear since,
\[
\weight_\u \pa*{\v_0,\dots,\v_n}
  \ge \sum_{i=0}^{\floor*{n/t} - 1} \weight_\u \pa*{\v_{it},\dots,\v_{it + t - 1}}
  > \pa*{\frac{n}{t} - 1} \weight_\u \pa*{\v_0,\dots,\v_{t-1}}.
\]

For the second part, let $i_1, \dots, i_m$ such that $\v_{i_1}, \dots, \v_{i_m}$ is basis of $\FF_p^m$.
We can assume without loss of generality that $i_j < t$ for all $j=1,\dots,m$.
Hence
\[
\sigma \pa*{\v_0, \dots, \v_{t-1}} \ge \sigma \pa*{\v_{i_1}, \dots, \v_{i_m}},
\]
and it suffices to show that
\[
\sigma\pa*{\v_{i_1}, \dots, \v_{i_m}} \ge 1.
\]
Assume otherwise, so there is a non-zero vector $\u$ such that $\ang*{\u, \v_{i_j}} = 0$ for all $j$.
Hence, $\u$ is a non-trivial solution of the following linear equation:
\begin{equation*}
\pa*{
\begin{matrix}
- & \v_{i_1} & - \\
- & \v_{i_2} & - \\
 & \vdots &  \\
- & \v_{i_m} & -
\end{matrix}
} \u \equiv \vze \pmod{p}.
\end{equation*}
But the matrix is invertible since its rows form a basis of $\FF_p^m$.
So we got a contradiction and the proof is finished.
\end{proof}
\section{A space of polynomials modulo \texorpdfstring{$p^k$}{p\string^k}}
\label{sec:ups-space}
Define the following subset of $\rmod{p^k}\br*{X}$
\begin{equation*}
  \Upsilon_k
    = \set*{\sum_{i=0}^{k-1} \bar{a}_i p^i X^i \cond \forall i, \bar{a}_i \in \rmod{p^{k-i}}}.
\end{equation*}
We have a natural bijection $\rmod{p^k} \times \dots \times \rmod{p} \to \Upsilon_k$, so
\begin{equation}
  \label{eq:ups:size}
  \#\Upsilon_k = \prod_{i=0}^{k-1} p^{k-i} = p^{k\pa*{k+1}/2}.
\end{equation}

\begin{prop}
\label{prop:ups:d-sets-in-ups}
For any positive integer $n$, let $d = d\pa*{n}$ and $k = k\pa*{n}$ be positive integers such that $\limsup_{n\to\infty} d\log n/k^2 < \log p / 8$.
Assume $g\in\ZZ_p\br*{X}_n$ is a random polynomial such that $g\bmod{p^k}$ is distributed uniformly in $\Upsilon_k$.
Then
\[
\Ex\br*{\rdset{g}} = \beta\pa*{d} + O\pa*{p^{\pa*{-\frac{1}{2} + \frac{1}{2}\he_p\pa*{\frac{2d}{k}}}k}}
\]
as $n\to\infty$.
\end{prop}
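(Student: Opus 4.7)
The plan is to compare $g$ to the canonical reference polynomial $g_0\pa*{X} := f\pa*{pX}$, where $f$ is Haar-uniform on $\ZZ_p\br*{X}_n^1$. A direct calculation shows that $g_0 \bmod p^k$ is itself uniformly distributed on $\Upsilon_k$: each coefficient $a_i p^i \bmod p^k$ is uniform on $p^i\ZZ/p^k\ZZ$ because $a_i$ is Haar on $\ZZ_p$. By \autoref{pre:roots-in-haar-polynomial}, $\Ex\br*{\rdset{g_0}} = \beta\pa*{d}$ exactly. Thus it suffices to show that $\Ex\br*{\rdset{g}}$ agrees with $\Ex\br*{\rdset{g_0}}$ up to the stated error, even though the distribution of $g$ above level $p^k$ is unconstrained.

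\emph{Step 1 (Newton--Raphson truncation).} Set $k' := \ceil*{\pa*{k+1}/2}$ so that $2k'-1 \le k$. Declare $\bar x \in \rmod{p^{k'}}$ \emph{stable} for $g$ if $g\pa*{\bar x} \equiv 0 \pmod{p^{2k'-1}}$ and $g'\pa*{\bar x} \not\equiv 0 \pmod{p^{k'}}$. By \autoref{pre:newton-raphson} each stable residue lifts uniquely to a root of $g$ in $\ZZ_p$, and stability is measurable with respect to $g \bmod p^{2k'-1}$, hence with respect to $g \bmod p^k$.

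\emph{Step 2 (Splitting and cancellation).} Let $R_{\mathrm{st}}\pa*{g}$ denote the number of stable residues, and decompose
\[
\rdset{g} = \binom{R_{\mathrm{st}}\pa*{g}}{d} + E\pa*{g},
\]
where $E\pa*{g}$ counts $d$-subsets of roots of $g$ containing at least one unstable root. Because $\binom{R_{\mathrm{st}}\pa*{g}}{d}$ is a function of $g \bmod p^k$ alone, and $g \bmod p^k$ and $g_0 \bmod p^k$ are equidistributed on $\Upsilon_k$, the two ``stable'' expectations coincide. Hence
\[
\Ex\br*{\rdset{g}} - \beta\pa*{d} = \Ex\br*{E\pa*{g}} - \Ex\br*{E\pa*{g_0}},
\]
and it remains to bound each of $\Ex\br*{E\pa*{g}}$ and $\Ex\br*{E\pa*{g_0}}$ by $O\pa*{p^{\pa*{-1/2 + \he_p\pa*{2d/k}/2}k}}$.

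\emph{Step 3 (Second-moment bound on the unstable piece).} An unstable root $x$ satisfies $v_p\pa*{g'\pa*{x}} \ge k'$. I plan to apply a Cauchy--Schwarz / second-moment estimate: $\Ex\br*{E\pa*{g}} \le \Ex\br*{E\pa*{g}^2}^{1/2}$. Expanding the square as a double sum over pairs of unstable $d$-subsets, each term is indexed by a configuration of at most $2d$ residues together with derivative-degeneracy and coincidence data. Each such configuration imposes linear constraints on the coefficient vector of $g$ viewed inside $\Upsilon_k$, and the random-walk machinery (\autoref{prop:zero-random-walk} combined with the weight lower bound \autoref{lem:rnd-walks:cycle-minimal-weight}) bounds the probability of any one configuration by $O\pa*{p^{-k}}$. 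The number of configurations comes from choosing $2d$ ``levels'' out of $\set{0,1,\dots,k-1}$, contributing $\binom{k}{2d} \asymp p^{k\he_p\pa*{2d/k}}$. Combining, $\Ex\br*{E\pa*{g}^2} = O\pa*{p^{k\he_p\pa*{2d/k}-k}}$, whose square root is the claimed error.

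\emph{Main obstacle.} The difficult step is Step 3: enumerating the unstable configurations, verifying that \autoref{prop:zero-random-walk} applies with minimal-weight large enough to yield the sharp $p^{-k}$ rate per class, and harvesting \emph{precisely} the entropy exponent $\he_p\pa*{2d/k}/2$ under the square root rather than the cruder $\he_p\pa*{2d/k}$. The hypothesis $d\log n/k^2 < \pa*{\log p}/8$ enters to control polynomial-in-$n$ factors hidden in the counting of valuation patterns of $d$ roots in $\ZZ_p$ (there are $O\pa*{\log n}$ relevant levels of valuation), ensuring the bound survives the union over all such patterns. The \emph{identical} argument applied to $g_0$ bounds $\Ex\br*{E\pa*{g_0}}$, closing the comparison.
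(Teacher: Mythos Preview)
Your Steps~1 and~2 are exactly the paper's approach: compare $g$ to a Haar reference via the $\lfloor k/2\rfloor$-Henselian root count, which is measurable modulo $p^k$ and therefore has the same expectation for both polynomials. (A small slip: with $k' = \lceil (k+1)/2\rceil$ one has $2k'-1=k+1>k$ for even $k$, so stability is \emph{not} measurable modulo $p^k$; the paper takes $k'=\lfloor k/2\rfloor$.)

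Step~3, however, has a genuine gap. \autoref{prop:zero-random-walk} is stated for a random walk $\sum_i \xi_i \v_i$ where the $\xi_i$ satisfy \autoref{main-assumption}; it says nothing about an abstract $g$ whose only constraint is that $g\bmod p^k$ is uniform on $\Upsilon_k$. The coefficients of $g$ above level $p^k$ are completely unspecified, so no random-walk input is available, and the ``$O(p^{-k})$ per configuration'' claim is unsupported. More seriously, $E(g)$ counts genuine roots of $g$ in $\ZZ_p$, so bounding $\Ex\br*{E(g)}$ cannot be reduced to an event in $\Upsilon_k$ alone; some control of $\rdset*{g}$ itself is required.

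The paper bypasses all of this with a first-moment argument that needs no random walks and no second moments. Work at level $k'=\lfloor k/2\rfloor$ and split the complement of ``all roots modulo $p^{k'}$ are simple'' into two events: (M)~$g$ has a non-simple root modulo $p^{k'}$ but $g\not\equiv 0\pmod{p^{k'}}$, and (Z)~$g\equiv 0\pmod{p^{k'}}$. On~M one has the \emph{deterministic} bound $\rdset*{g}<k'$ (this is \cite[Proposition~5]{shmueli2021expected}, a Newton-polygon fact for polynomials whose reduction lies in $\Upsilon_{k'}\setminus\{0\}$), whence $E(g)\le\binom{k'}{d}\le p^{\he_p(2d/k)\,k/2}$; combined with $\Pr(M)=O(p^{-k'})$ from the elementary union bound of \autoref{lem:ups:simple-prob}, this already yields the stated error. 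On~Z one uses only the trivial $E(g)\le n^d$ together with $\Pr(Z)=(\#\Upsilon_{k'})^{-1}=p^{-k'(k'+1)/2}$. This is precisely where the hypothesis $\limsup d\log n/k^2<(\log p)/8$ enters: it guarantees $n^d p^{-k'^2/2}=o(1)$, not any ``valuation-pattern'' count as you suggest.
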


For $\bar{x}\in\rmod{p^k}$ we say that $\bar{x}$ is \emph{simple root of $g$ modulo $p^k$} if $g\pa*{\bar{x}}\equiv 0\pmod{p^k}$ and $g'\pa*{\bar{x}}\not\equiv0\pmod{p^k}$.
And, we say $\bar{x}\in\rmod{p^k}$ is \emph{non-simple root of $g$ modulo $p^k$} if $g\pa*{\bar{x}}\equiv g'\pa*{\bar{x}}\equiv 0\pmod{p^{k}}$.

\begin{lem}
\label{lem:ups:simple-prob}
For any $k>0$, let $g\in\ZZ_p\br*{X}$
be a random polynomial such that $g\bmod{p^{k}}$ is distributed uniformly
in $\Upsilon_k$. Then we have that
\begin{equation*}
\Pr\pa*{g\text{ has a non-simple root modulo }p^k} = O\pa*{p^{-k}}
\end{equation*}
as $k\to\infty$.
\end{lem}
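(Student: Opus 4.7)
The plan is a straightforward union bound. Since a non-simple root of $g$ modulo $p^k$ lies in the finite set $\rmod{p^k}$ of size $p^k$, we have
\[
\Pr\pa*{g\text{ has a non-simple root mod }p^k}
  \le \sum_{\bar x\in\rmod{p^k}} \Pr\pa*{g\pa*{\bar x}\equiv 0 \text{ and } g'\pa*{\bar x}\equiv 0 \pmod{p^k}}.
\]
So the whole game is to show that for each fixed $\bar x$ the joint probability is $p^{1-2k}$; summing gives $p^{1-k}=O\pa*{p^{-k}}$.

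To compute the joint probability for a fixed $\bar x$, write $g(X)=\sum_{i=0}^{k-1}\bar a_i p^i X^i$ with the $\bar a_i$ independent and uniform on $\rmod{p^{k-i}}$, as in the definition of $\Upsilon_k$. Then $g\pa*{\bar x}=\bar a_0+\sum_{i\ge 1}\bar a_i p^i \bar x^i$ and $g'\pa*{\bar x}=\sum_{i\ge 1} i\bar a_i p^i \bar x^{i-1}$. The key structural observation is that $\bar a_0$ appears only in $g\pa*{\bar x}$, while the entire $g'\pa*{\bar x}$ lies in the subgroup $p\rmod{p^k}$, with $\bar a_1$ contributing the only term of exact $p$-adic valuation $1$ and all other terms lying in $p^2\rmod{p^k}$.

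From here the calculation factors. Conditioning on $\bar a_1,\dots,\bar a_{k-1}$, the equation $g\pa*{\bar x}\equiv 0\pmod{p^k}$ fixes $\bar a_0$ uniquely, contributing probability $p^{-k}$. For the second equation, write it as $p\bar a_1 \equiv -\sum_{i\ge 2}i\bar a_i p^i \bar x^{i-1}\pmod{p^k}$; the right-hand side is divisible by $p^2$, and since the map $\bar a_1\mapsto p\bar a_1\bmod p^k$ is a bijection $\rmod{p^{k-1}}\to p\rmod{p^k}$, this pins down $\bar a_1$ uniquely among $p^{k-1}$ options, contributing probability $p^{-(k-1)}$. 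Independence of $\bar a_0$ from $\bar a_1,\dots,\bar a_{k-1}$ then yields the joint probability $p^{-k}\cdot p^{-(k-1)}=p^{1-2k}$.

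There is no real obstacle: the whole point of the definition of $\Upsilon_k$ (with its staircase of moduli $p^{k-i}$) is that the probabilities split neatly across coordinates, so the count is essentially bookkeeping. The one thing to check carefully is that $\bar a_1$ does uniformly parametrize the coset $p\rmod{p^k}\subseteq\rmod{p^k}$ via $\bar a_1\mapsto p\bar a_1$, which is exactly the reason the second factor is $p^{-(k-1)}$ rather than $p^{-k}$; this shortfall of one factor of $p$ is what makes the final bound $p^{1-k}$ rather than $p^{-k}$, still comfortably $O\pa*{p^{-k}}$.
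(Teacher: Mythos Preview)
Your proof is correct and is essentially the same as the paper's: a union bound over $\bar x\in\rmod{p^k}$, followed by the observation that, given $\bar a_2,\dots,\bar a_{k-1}$, the condition $g'(\bar x)\equiv 0\pmod{p^k}$ determines $\bar a_1\in\rmod{p^{k-1}}$ uniquely, and then, given $\bar a_1,\dots,\bar a_{k-1}$, the condition $g(\bar x)\equiv 0\pmod{p^k}$ determines $\bar a_0\in\rmod{p^k}$ uniquely, yielding joint probability $p^{1-2k}$ and hence the bound $p^{1-k}$. The paper phrases the second step as dividing the congruence $g'(\bar x)\equiv 0\pmod{p^k}$ through by $p$ to get a congruence for $\bar a_1$ modulo $p^{k-1}$, which is exactly your bijection $\bar a_1\mapsto p\bar a_1$.
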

\begin{proof}
Let $\mathcal{M}$ be the set of all non-simple roots of $g$ modulo $p^k$.
We write $g\pa*{X} = \rvs_0 + \rvs_1 p X + \rvs_2 p^2 X^2 + \dots$ and then for a fixed $\bar{x}\in\rmod{p^k}$ we have
\begin{equation}
\label{eq:ups:single-non-simple-prob}
\begin{aligned}
\Pr\pa*{\bar{x}\in\mathcal{M}}
  &= \Pr\pa*{\eqsys{g\pa*{\bar{x}} \equiv 0 \pmod{p^k}}{g'\pa*{\bar{x}} \equiv 0 \pmod{p^k}}} \\
  &= \Pr\pa*{\eqsys{\rvs_0 \equiv -\pa*{\rvs_1 p \bar{x} + \dots} \pmod{p^k}}{\rvs_1 p \equiv -\pa*{2\rvs_2 p^2 \bar{x}^2 + \dots} \pmod{p^k}}} \\
  &= \Pr\pa*{\eqsys{\rvs_0 \equiv -\pa*{\rvs_1 p \bar{x} + \dots} \pmod{p^k}}{\rvs_1 \equiv -\pa*{2\rvs_2 p \bar{x}^2 + \dots} \pmod{p^{k-1}}}} \\
  &= p^{-2k+1}
  .
\end{aligned}
\end{equation}
The last equality holds true because $\pa*{\rvs_0 \bmod p^k,\rvs_{1} \bmod p^{k-1}}$ is distributed uniformly in $\rmod{p^k} \times \rmod{p^{k-1}}$.

We finish the proof by using union bound and plugging \eqref{eq:ups:single-non-simple-prob}
\begin{align*}
\Pr\pa*{g\text{ has a non-simple root modulo }p^k}
  &= \Pr\pa*{\bigcup_{\bar{x}\in\rmod{p^k}} \set{\bar{x}\in\mathcal{M}}} \\
  &\le \sum_{\bar{x}\in\rmod{p^k}} \Pr\pa*{\bar{x}\in\mathcal{M}} = p^{-k+1}
  . \qedhere
\end{align*}
\end{proof}

\begin{defn}
For a polynomial $g\in\ZZ_p\br*{X}$ we say that $\bar{x}\in\rmod{p^k}$ is a $k$\emph{-Henselian root of $g$} if $g'\pa*{\bar{x}}\not\equiv0\pmod{p^k}$ and there is a lift $\bar{y}$ of $\bar{x}$ in $\rmod{p^{2k-1}}$ such that $g\pa*{\bar{y}}\equiv0\pmod{p^{2k-1}}$.
\end{defn}

We denote by $\hdset{k}{g}$ the number of $d$-sets of $k$-Henselians roots of $g$.
By Newton-Raphson method (\autoref{pre:newton-raphson}) every $k$-Henselian root can be lifted uniquely to a root of $g$ in $\ZZ_p$, hence
\begin{equation*}
\rdset{g} \ge \hdset{k}{g}.
\end{equation*}
Equality holds as follows
\begin{lem}
\label{ups:simple-roots-henselians}
If a polynomial $g\in\ZZ_p\br*{X}$ has only simple roots modulo $p^k$, then $\rdset{g} = \hdset{k}{g}$.
\end{lem}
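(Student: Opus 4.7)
The plan is to construct an explicit bijection between the roots of $g$ in $\ZZ_p$ and the $k$-Henselian roots of $g$ in $\rmod{p^k}$, which immediately implies that the two $d$-set counts agree.

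First I would set up the forward direction. Given any root $x \in \ZZ_p$ of $g$, let $\bar x = x \bmod p^k$. Then $g\pa*{\bar x} \equiv g\pa*{x} = 0 \pmod{p^k}$, so $\bar x$ is a root of $g$ modulo $p^k$. By the standing hypothesis that every root of $g$ modulo $p^k$ is simple, we obtain $g'\pa*{\bar x} \not\equiv 0 \pmod{p^k}$. Moreover, $\bar y = x \bmod p^{2k-1}$ is a lift of $\bar x$ to $\rmod{p^{2k-1}}$ satisfying $g\pa*{\bar y} \equiv 0 \pmod{p^{2k-1}}$. Hence $\bar x$ meets the definition of a $k$-Henselian root, so reduction mod $p^k$ gives a well-defined map from roots of $g$ in $\ZZ_p$ to $k$-Henselian roots of $g$.

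Next I would invoke \autoref{pre:newton-raphson} (Newton–Raphson) for the reverse direction: every $k$-Henselian root $\bar x$ has, by definition, a lift $\bar y \in \rmod{p^{2k-1}}$ with $g\pa*{\bar y} \equiv 0 \pmod{p^{2k-1}}$ and satisfies $g'\pa*{\bar y} \equiv g'\pa*{\bar x} \not\equiv 0 \pmod{p^k}$, so $\bar y$ lifts uniquely to a genuine root $x \in \ZZ_p$ of $g$. This yields a map in the opposite direction.

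Finally I would check that the two maps are mutual inverses. Starting from a root $x \in \ZZ_p$, reducing to $\bar x$ modulo $p^k$ and then applying Newton–Raphson recovers some root of $g$ lifting $\bar x$; but $x$ itself is such a lift, and the uniqueness clause of \autoref{pre:newton-raphson} forces the Newton–Raphson lift to equal $x$. Conversely, starting from a $k$-Henselian root $\bar x$, lifting to $x \in \ZZ_p$ via Newton–Raphson and then reducing modulo $p^k$ returns $\bar x$ by construction. Thus the correspondence is a bijection between $\set*{x \in \ZZ_p : g\pa*{x} = 0}$ and the set of $k$-Henselian roots of $g$. Taking $d$-subsets on both sides yields $\rdset{g} = \hdset{k}{g}$. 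There is no real obstacle here beyond carefully unwinding the definitions of ``simple root modulo $p^k$'' and ``$k$-Henselian root'' and citing Newton–Raphson for both existence and uniqueness of the lift.
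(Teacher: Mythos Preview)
Your proposal is correct and follows essentially the same approach as the paper: reduce to the $d=1$ case by exhibiting the reduction map $x \mapsto x \bmod p^k$ as a bijection from $\ZZ_p$-roots to $k$-Henselian roots, with the forward direction using the simple-roots hypothesis and the inverse supplied by Newton--Raphson (\autoref{pre:newton-raphson}). The paper's write-up is terser (it simply asserts Newton--Raphson makes the map invertible), whereas you spell out both compositions explicitly, but the argument is the same.
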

\begin{proof}
It suffices to show that $\rdset*{g} = \hdset[1]{k}{g}$.
So, we consider the map $x \mapsto x\bmod{p^k}$ and prove that this map is a bijection from integer roots of $g$ to $k$-Henselians roots of $g$.
Indeed, if $x$ is a root of $g$  then $x\bmod{p^k}$ is root of $g$ modulo $p^k$.
Since $g$ has only simple roots modulo $p^k$, we obtain $g'\pa*{\bar{x}} \not\equiv 0 \pmod{p^k}$.
Also, we have that $x\bmod{p^{2k-1}}$ is a lift of $x\bmod{p^k}$ and $g\pa*{x\bmod{p^{2k-1}}} \equiv 0 \pmod{p^{2k-1}}$.
Finally, Newton-Raphson method (\autoref{pre:newton-raphson}) gives that the map is invertible.
\end{proof}

\begin{lem}
\label{lem:ups:henselian-approx}
For any positive integer $n$, let $d = d\pa*{n}$ and $k = k\pa*{n}$ be positive integers such that $\limsup_{n\to\infty} d\log n/k^2 < \log p / 2$.
Then
\[
\Ex\br*{\rdset{g}} = \Ex\br*{\hdset{k}{g}} + O\pa*{p^{\pa*{-1+\he_p\pa*{\frac{d}{k}}}k}}
\]
as $n\to\infty$.
\end{lem}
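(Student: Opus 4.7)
The strategy reduces the lemma to a rare event. By \autoref{ups:simple-roots-henselians}, on the event $E$ that $g$ has only simple roots modulo $p^k$, we have $\rdset{g} = \hdset{k}{g}$ pointwise. Combined with Newton--Raphson (\autoref{pre:newton-raphson}), which lifts each $k$-Henselian root uniquely to an integer root and hence gives $\rdset{g} \ge \hdset{k}{g}$ always, we obtain the reduction
\[
0 \le \Ex\br*{\rdset{g}} - \Ex\br*{\hdset{k}{g}} = \Ex\br*{\pa*{\rdset{g} - \hdset{k}{g}} \indic{E^\comp}}.
\]
\autoref{lem:ups:simple-prob} contributes $\Pr\pa*{E^\comp} = O\pa*{p^{-k}}$, which accounts for the factor $p^{-k}$ in the target bound $O\pa*{p^{\pa*{-1+\he_p\pa*{d/k}}k}}$.

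Next, I would write $\rdset*{g} = \hdset[1]{k}{g} + B$, where $B$ counts the integer roots whose residue modulo $p^k$ is a non-simple root of $g$ modulo $p^k$, and use the Vandermonde-type identity
\[
\rdset{g} - \hdset{k}{g} = \binom{\hdset[1]{k}{g} + B}{d} - \binom{\hdset[1]{k}{g}}{d} = \sum_{j=1}^{d}\binom{B}{j}\binom{\hdset[1]{k}{g}}{d-j},
\]
which is supported on $\set*{B \ge 1} \subseteq E^\comp$. Splitting the expectation via a union bound over the non-simple residue $\bar{x}\in\rmod{p^k}$ and using the per-residue probability $p^{-2k+1}$ from~\eqref{eq:ups:single-non-simple-prob}, the $p^k$ choices of $\bar{x}$ leave a factor of $p^{-k+1}$ to be combined with the conditional combinatorial weight coming from the binomial sum above.

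The main obstacle is the quantitative bound on $\rdset*{g}$ conditional on a specific residue being non-simple: the naive $\rdset*{g} \le n$ gives only $\binom{n}{d}\, p^{-k} \ll p^{k^2/2 - k}$ under the hypothesis $d\log n < k^2\log p / 2$, which is much larger than the target. I would exploit the valuation filtration built into $\Upsilon_k$ (the $X^i$ coefficient of $g \bmod p^k$ has $p$-adic valuation at least $i$ for $i<k$) together with an iterative Hensel-type analysis at $\bar{x}$, aiming to show that $\rdset*{g} = O\pa*{k}$ outside an event negligible at the required scale. The standard entropy bound $\binom{Ck}{d} \le p^{\he_p\pa*{d/k}k + O\pa*{d}}$ would then close the argument, the $O\pa*{d}$ contribution being absorbable into the implied constant since $\he_p\pa*{d/k}k \ge d\log_p\pa*{k/d}$ dominates $O\pa*{d}$ in the range $k/d \to \infty$ forced by the hypothesis.
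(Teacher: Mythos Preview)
Your reduction to $\Ex\br*{\pa*{\rdset{g}-\hdset{k}{g}}\indic{E^\comp}}$ is correct and matches the paper. The Vandermonde decomposition is harmless but unnecessary: since $\hdset{k}{g}\ge 0$, the cruder bound $\Ex\br*{\rdset{g}\indic{E^\comp}}\le\Ex\br*{\binom{\rdset*{g}}{d}\indic{E^\comp}}$ already suffices.

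The gap is your last paragraph. You correctly identify the target---show $\rdset*{g}=O\pa*{k}$ off a negligible subevent of $E^\comp$---but the proposed ``iterative Hensel-type analysis at $\bar{x}$'' is not a proof, and your side claim that the hypothesis forces $k/d\to\infty$ is false (take $d$ and $k$ both of order $\log n$). The paper closes this in one stroke by \emph{naming} the negligible subevent: split $E^\comp=M\sqcup Z$ with $Z=\set*{g\equiv 0\pmod{p^k}}$ and $M=E^\comp\setminus Z$. On $M$ one has $g\not\equiv 0\pmod{p^k}$, and then $\rdset*{g}<k$ holds \emph{deterministically}---this is \cite[Proposition~5]{shmueli2021expected}, a general fact about $\ZZ_p$-polynomials having nothing to do with the $\Upsilon_k$ structure---so $\Ex\br*{\rdset{g}\cond M}\Pr\pa*{M}\le\binom{k}{d}\cdot O\pa*{p^{-k}}=O\pa*{p^{\pa*{-1+\he_p\pa*{d/k}}k}}$ directly, with no extra $O\pa*{d}$ to absorb. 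On $Z$ the trivial bound $\rdset*{g}\le n$ is enough, because $\Pr\pa*{Z}=1/\#\Upsilon_k=p^{-k\pa*{k+1}/2}$ and the hypothesis $\limsup d\log n/k^2<\tfrac{1}{2}\log p$ is precisely what makes $n^d p^{-k^2/2}=O\pa*{e^{-ck^2}}$, which is absorbed into the displayed error. No per-residue conditioning is required.
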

\begin{proof}
We define the following events:
\begin{enumerate}
  \item $S$ be the event that $g$ has only simple roots modulo $p^k$,
  \item $M$ be the event that $g$ has a non-simple root modulo $p^k$ and $g\not\equiv 0 \pmod{p^k}$ and
  \item $Z$ be the event that $g \equiv 0\pmod{p^k}$.
\end{enumerate}
Clearly, the events are disjoints and $\Pr\pa*{S\sqcup M \sqcup Z} = 1$.
From the total law of expectation we get that
\begin{equation}
\label{eq:ups:total-law-ex}
\Ex\br*{\rdset{g}}
  = \Ex\br*{\rdset{g} \cond S} \Pr\pa*{S}
    + \Ex\br*{\rdset{g} \cond M} \Pr\pa*{M}
    + \Ex\br*{\rdset{g} \cond Z} \Pr\pa*{Z}.
\end{equation}

We know that when $S$ occurs then $\rdset{g} = \hdset{k}{g}$ by \autoref{ups:simple-roots-henselians}, so
\begin{equation*}
\Ex\br*{\rdset{g} \cond S} \Pr\pa*{S}
  = \Ex\br*{\hdset{k}{g} \cond S} \Pr\pa*{S}
  \le \Ex\br*{\hdset{k}{g}}.
\end{equation*}
Recall that $\rdset{g} \ge \hdset{k}{d}$, and with \eqref{eq:ups:total-law-ex} we obtain
\begin{equation}
\label{eq:ups:rdsets-with-big-error-term}
\Ex\br*{\rdset{g}}
  = \Ex\br*{\hdset{k}{g}}
    + O\pa*{\Ex\br*{\rdset{g} \cond M} \Pr\pa*{M}
    + \Ex\br*{\rdset{g} \cond Z} \Pr\pa*{Z}}.
\end{equation}

To bound the error term we first consider $\Ex\br*{\rdset{g} \cond M} \Pr\pa*{M}$.
If $g\not\equiv 0 \pmod{p^k}$ then by \cite[Proposition 5]{shmueli2021expected} we get that $\rdset*{g} < k$.
Hence,
\[
\Ex\br*{\rdset{g} \cond M} = \Ex\br*{\binom{\rdset*{g}}{d} \cond M} \le \binom{k}{d}.
\]
Using binomial coefficient's approximation \cite[Example 11.1.3]{cover2006elements} we get
\begin{equation*}
\Ex\br*{\rdset{g} \cond M} = O\pa*{p^{\he_p\pa*{\frac{d}{k}} k}}.
\end{equation*}
Moreover, from \autoref{lem:ups:simple-prob} we have $\Pr\pa*{M} = O\pa*{p^{-k}}$, so
\begin{equation}
\label{eq:ups:final-non-simple-bound}
\Ex\br*{\rdset{g} \cond M} \Pr\pa*{M}
  = O\pa*{p^{\he_p\pa*{\frac{d}{k}} k} p^{-k}}
  = O\pa*{p^{\pa*{-1 + \he_p\pa*{\frac{d}{k}}} k}}.
\end{equation}

Next we bound $\Ex\br*{\rdset{g} \cond Z} \Pr\pa*{Z}$.
Since $\rdset*{g} \le n$,
\[
\Ex\br*{\rdset{g} \cond Z} = \Ex\br*{\binom{\rdset*{g}}{d} \cond Z} \le \binom{n}{d} \le n^d.
\]
From \eqref{eq:ups:size} we have that $\Pr\pa*{Z} = p^{-k\pa*{k+1}/2} = O\pa*{p^{-k^2/2}}$ so
\begin{equation*}
\Ex\br*{\rdset{g} \cond Z} \Pr\pa*{Z} = O\pa*{n^d p^{-k^2 / 2}} = O\pa*{\exp\pa*{d \log n - \frac{1}{2} k^2 \log p}}.
\end{equation*}
Since $\limsup_{n\to\infty} d\log n / k^2 < \log p / 2$ there exists a constant $c > 0$ such that $d \log n - k^2 \log p / 2 < -c k^2$, hence
\begin{equation}
\label{eq:ups:final-zero-modulo-bound}
\Ex\br*{\rdset{g} \cond Z} \Pr\pa*{Z} = O\pa*{\exp\pa*{-c k^2}}.
\end{equation}
Finally, plugging \eqref{eq:ups:final-non-simple-bound} and \eqref{eq:ups:final-zero-modulo-bound} into \eqref{eq:ups:rdsets-with-big-error-term} finish the proof.
\end{proof}

\begin{proof}[Proof of \autoref{prop:ups:d-sets-in-ups}]
Let $h \in \ZZ_p\br*{X}$ be a random polynomial distributed uniformly in $\ZZ_p\br*{X}^1_k$.
We write $h\pa*{X} = \rvs_0 + \rvs_1 X + \dots + \rvs_{k-1} X^{k-1} + X^k$, and set
\[
h_0\pa*{X} = h\pa*{pX} = \rvs_0 + \rvs_1 p X + \dots + \rvs_{k-1} p^{k-1} X^{k-1} + p^k X^k.
\]
The random variable $\rvs_i$ is distributed according to the normalized Haar measure on $\ZZ_p$ hence $\rvs_i \bmod p^{k-i}$ is distributed uniformly in $\rmod{p^{k-i}}$.

Therefore, $h_0 \bmod{p^k}$ and $g \bmod{p^k}$ has the same distribution which is the uniform distribution on $\Upsilon_k$.
Also $h_0 \bmod{p^{\floor{k/2}}}$ and $g \bmod{p^{\floor{k/2}}}$ are both uniform in $\Upsilon_{\floor*{k/2}}$, so \autoref{lem:ups:henselian-approx} gives
\begin{align}
\label{eq:us:rdset-estimate}
\Ex\br*{\rdset{g}}
  &= \Ex\br*{\hdset{\floor{k/2}}{g}} + O\pa*{p^{\pa*{-1 + \he_p\pa*{\frac{2d}{k}}}\frac{k}{2}}}, \\
\label{eq:us:hdset-estimate}
\Ex\br*{\rdset{h_0}}
  &= \Ex\br*{\hdset{\floor{k/2}}{h_0}} + O\pa*{p^{\pa*{-1 + \he_p\pa*{\frac{2d}{k}}}\frac{k}{2}}}.
\end{align}
Also, $\Ex\br*{\hdset{\floor{k/2}}{g}}$ depends only on the distribution of $g\bmod{p^k}$, thus
\begin{equation*}
\Ex\br*{\hdset{\floor{k/2}}{g}} = \Ex\br*{\hdset{\floor{k/2}}{h_0}}
\end{equation*}
and by subtracting \eqref{eq:us:hdset-estimate} from \eqref{eq:us:rdset-estimate} we get
\begin{equation*}
\Ex\br*{\rdset{g}}
  = \Ex\br*{\rdset{h_0}} + O\pa*{p^{\pa*{-\frac{1}{2} + \frac{1}{2}\he_p\pa*{\frac{2d}{k}}}k}}.
\end{equation*}
Using \autoref{pre:roots-in-haar-polynomial} in the last equation finish the proof.
\end{proof}


\section{The distribution of Hasse derivatives}
 \label{sec:hasse}

In this section we explore properties of the distribution of the Hasse derivatives of $f$, $D^{\pa{j}}f$, modulo powers of $p$.
We recall that the Hasse derivatives are given by the following
\begin{equation}
\label{eq:hasse-derivative}
D^{\pa{j}} f\pa*{r} = \frac{1}{j!} f^{\pa{j}}\pa*{r} = \sum_{i=1}^n \rv_i \binom{i}{j} r^{i-j}.
\end{equation}
Also note that if $\rv_0,\dots,\rv_{n-1}, r \in \ZZ_p$ then also $D^{\pa{j}} f\pa*{r}\in \ZZ_p$, so it is possible to talk about its reduction modulo $p$.

\begin{prop}
\label{prop:hasse:coeff-prob}
Let $f$ be random polynomial with coefficients satisfying \autoref{main-assumption} and let $m = m\pa*{n} <n / p^2$ be a positive integer.
Assume $k_0, \dots, k_{m-1}$ are non-negative integers and $a^{\pa{j}}_r\in\ZZ_{p}$ is $p$-adic integer for all $j\in\set*{0,\dots, m-1}$ and $r\in\set*{1,\dots,p-1}$.
Then
\begin{equation*}
\Pr\pa*{\bigwedge_{\tindex}\quad D^{\pa{j}}f\pa*{r} \equiv a^{\pa{j}}_r\pmod{p^{k_j}}}=p^{-N\pa*{p-1}}+O\pa*{\exp\pa*{-\frac{\tau n}{p^{2k+2}m}+Kmp\log p}}
\end{equation*}
as $n\to\infty$ where $K=\max_{0\le j<m}k_j$, $N=\sum_{j=0}^{m-1}k_j$ and $\tau$ is taken from \autoref{main-assumption}.
\end{prop}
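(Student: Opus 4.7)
The plan is to recast the event as the endpoint of a random walk landing in a prescribed subset and then invoke \autoref{prop:zero-random-walk}. Set $V = \pa*{\rmod{p^{K}}}^{\pa*{p-1} m}$, with coordinates indexed by pairs $\pa*{j, r}$ for $\tindex$, and define
\[
\pa*{\v_i}_{j, r} = \binom{i}{j} r^{i-j} \bmod p^K.
\]
By \eqref{eq:hasse-derivative}, together with the convention $\rv_n = 1$, the sum $\sum_{i=0}^n \rv_i \v_i$ is precisely the tuple $\pa*{D^{\pa{j}} f\pa*{r}}_{j, r}$ reduced modulo $p^K$, so the event in question is $\sum_{i=0}^n \rv_i \v_i \in S$ for
\[
S = \set*{\x \in V : x_{j, r} \equiv a^{\pa{j}}_r \pmod{p^{k_j}} \text{ for all } \tindex}.
\]
Counting lifts gives $\#S = p^{\pa*{p-1}\pa*{mK-N}}$ and $\#V = p^{\pa*{p-1} mK}$, so the main term $\#S/\#V = p^{-\pa*{p-1} N}$ delivered by \autoref{prop:zero-random-walk} matches the statement.

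The crux is therefore the lower bound on $\sigma\pa*{\v_0, \dots, \v_n}$, which I will obtain from \autoref{lem:rnd-walks:cycle-minimal-weight}. Fermat's little theorem yields that $i \mapsto r^{i-j} \bmod p$ is periodic with period dividing $p-1$ for each $r \in \set*{1,\dots,p-1}$, while Lucas' theorem implies that $i \mapsto \binom{i}{j} \bmod p$ depends only on the base-$p$ digits of $i$ up to position $\ceil*{\log_p m} - 1$ whenever $j < m$, so its period divides $p^{\ceil*{\log_p m}} \le pm$. Jointly, $\v_i \bmod p$ has period $t \le \pa*{p-1}p^{\ceil*{\log_p m}} \le p^2 m$ in $i$, and the hypothesis $m < n/p^2$ secures $t < n$. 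What remains is to verify $\sigma\pa*{\v_0 \bmod p, \dots, \v_{t-1} \bmod p} \ge 1$, i.e.\ that the reduced vectors span $\FF_p^{\pa*{p-1}m}$. A nonzero annihilator $\u = \pa*{u_{j,r}}$ would force
\[
0 \equiv \sum_{r=1}^{p-1} r^i P_r\pa*{i} \pmod{p} \quad \text{for all } i\ge 0,\qquad P_r\pa*{i} = \sum_{j=0}^{m-1} u_{j,r} r^{-j} \binom{i}{j}.
\]
Restricting to arithmetic progressions $i \equiv i_0 \pmod{p-1}$, where $r^i = r^{i_0}$, and invoking the Vandermonde independence of $\set*{r^{i_0}}_{r=1}^{p-1}$ reduces the claim to $P_r \equiv 0 \pmod p$ along infinitely many integers, which via Lucas' theorem and the linear independence of $\set*{i \mapsto \binom{i}{j}}_{j=0}^{m-1}$ as integer-valued functions forces $\u = 0$. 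This rank verification is the principal technical obstacle, since for $m \ge p$ the denominators $j!$ are no longer invertible mod $p$ and a naive polynomial identity argument in $\FF_p\br*{i}$ breaks down; one must instead unwind the Lucas decomposition of $\binom{i}{j}$ in each base-$p$ coordinate.

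Granted the rank claim, $\sigma\pa*{\v_0, \dots, \v_n} \ge n/t - 1 \ge n/\pa*{p^2 m} - 1$. Applying \autoref{prop:zero-random-walk} with $k = K$ now yields the error term
\[
\#S \cdot \exp\pa*{-\frac{\tau \sigma\pa*{\v_0,\dots,\v_n}}{p^{2K}}} = O\pa*{\exp\pa*{\pa*{p-1}\pa*{mK-N}\log p - \frac{\tau n}{p^{2K+2} m}}},
\]
and bounding $\pa*{p-1}\pa*{mK - N} \le pmK$ collapses the right-hand side to $O\pa*{\exp\pa*{Kmp\log p - \tau n/\pa*{p^{2K+2} m}}}$, matching the stated estimate (the $k$ in the exponent of the denominator in the proposition being the same as our $K$).
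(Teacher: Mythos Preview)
Your approach is essentially the paper's: the same random walk in $V=\pa*{\rmod{p^K}}^{m\pa*{p-1}}$, the same reduction to \autoref{prop:zero-random-walk}, the same periodicity via Fermat and Lucas, and the same application of \autoref{lem:rnd-walks:cycle-minimal-weight} to bound $\sigma\pa*{\v_0,\dots,\v_n}\ge n/\pa*{p^2m}-1$. The only stylistic difference is in the rank verification: where you argue dually by killing a putative annihilator $\u$, the paper works constructively, exhibiting a basis as $T\pa*{\u_t\otimes\w_s}$ with $\u_t=\pa*{\binom{t}{0},\dots,\binom{t}{m-1}}$ triangular and $\w_s=\pa*{r^{s-1}}_r$ Vandermonde, then using the Chinese Remainder Theorem to locate indices $i$ with $i\equiv t\pmod{p^{\ceil*{\log_p m}}}$ and $i\equiv s-1\pmod{p-1}$ realising each basis vector among the $\v_i$---this is exactly the decoupling your Vandermonde step needs and which your sketch leaves implicit.
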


Set $V=\pa*{\rmod{p^K}}^{m\pa*{p-1}}$ and construct a random walk over the additive group $\pa*{V,+}$ as in \autoref{sec:rnd-walks} with the vectors $\v_0,\dots,\v_n$ defined by
\begin{equation}
\label{eq:hasse:step-vectors}
\v_i = \pa*{\binom{i}{j} r^{i-j}}_{\substack{0\le j < m \\ 1 \le r < p}}.
\end{equation}
From \eqref{eq:hasse-derivative} the $n$-step of the random walk is
\begin{equation}
\label{eq:hasse:hasse-vectors}
\sum_{i=0}^{n}\rv_i \v_i = \pa*{D^{\pa{j}}f\pa*{r}}_{\substack{0\le j < m \\ 1 \le r < p}}.
\end{equation}

\begin{lem}
\label{cor:hasse:v-minimal-weight}
Let $\v_0, \dots, \v_{n}$ be vectors as in \eqref{eq:hasse:step-vectors}.
Then
\[
\frac{n}{p^2 m} - 1 < \sigma\pa*{\v_0,\dots,\v_{n}}.
\]
\end{lem}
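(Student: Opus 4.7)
The plan is to invoke \autoref{lem:rnd-walks:cycle-minimal-weight} by exhibiting a short cycle $t < p^2 m$ for the sequence $\v_i \bmod p$ and verifying that the vectors within one such cycle span $\FF_p^{m\pa*{p-1}}$. For the cycle length, I would use Lucas's theorem: for $j < m$, $\binom{i}{j} \bmod p$ depends only on the base-$p$ digits of $i$ below position $\ceil*{\log_p m}$, so it is periodic in $i$ with period dividing $p^{\ceil*{\log_p m}} \leq pm$. The factor $r^{i-j}$ has period dividing $p-1$ in $i$, and since $\gcd\pa*{p, p-1} = 1$ the joint period of all components of $\v_i \bmod p$ divides
\[
t := p^{\ceil*{\log_p m}}\pa*{p-1} \leq pm\pa*{p-1} < p^2 m.
\]
The hypothesis $m < n/p^2$ gives $t < n$, so this $t$ is a legitimate cycle for the lemma.

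For the spanning, suppose $\u = \pa*{u_{j,r}} \in \FF_p^{m\pa*{p-1}}$ satisfies $\ang*{\u, \v_i} \equiv 0 \pmod p$ for $i = 0, \dots, t-1$, hence for all $i \geq 0$ by periodicity. Grouping coordinates by $r$ rewrites this as
\[
\sum_{r=1}^{p-1} P_r\pa*{i}\, r^i \equiv 0 \pmod p, \qquad P_r\pa*{i} := \sum_{j=0}^{m-1} u_{j,r}\, r^{-j} \binom{i}{j}.
\]
My plan is to eliminate the $r$'s one at a time via the difference operator $\pa*{\Delta_{r_0} f}\pa*{i} := f\pa*{i+1} - r_0 f\pa*{i}$: a direct check gives $\Delta_{r_0}\pa*{r^i Q\pa*{i}} = r^i\pa*{r\, Q\pa*{i+1} - r_0 Q\pa*{i}}$, which annihilates $r_0^i Q\pa*{i}$ after at most $m$ iterations, while for $r \neq r_0$ it preserves the degree of $Q$ (the new leading coefficient picks up a nonzero factor $r - r_0$). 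Induction on the number of nonzero summands then forces every $P_r \equiv 0$, after which $u_{j,r} = 0$ follows from the standard fact that $\set*{\binom{i}{j}}_{0 \leq j < m}$ is linearly independent over $\FF_p$ as a family of functions on $\ZZ$ (upper-triangular in the monomial basis with unit diagonal).

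Plugging both ingredients into \autoref{lem:rnd-walks:cycle-minimal-weight} then yields $\sigma\pa*{\v_0, \dots, \v_n} \geq n/t - 1 > n/\pa*{p^2 m} - 1$, as required. The period bound and the final application of the lemma are routine Lucas-plus-CRT bookkeeping; the main obstacle is the spanning step, which is really the linear independence over $\FF_p$ of the twisted exponential functions $\set*{i \mapsto r^i \binom{i}{j}}_{1 \leq r < p,\, 0 \leq j < m}$. The difference-operator induction handles it cleanly, provided one checks that $\Delta_{r_0}$ genuinely preserves the degree bound $\deg Q < m$ needed to keep the inductive hypothesis intact through every elimination step.
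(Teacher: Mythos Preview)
Your proposal is correct and shares the paper's overall scaffold: both arguments invoke \autoref{lem:rnd-walks:cycle-minimal-weight} after establishing a period $t=(p-1)p^{\lceil\log_p m\rceil}<p^2m$ for $\v_i\bmod p$ via Lucas's theorem and Fermat's little theorem. The difference lies entirely in the spanning step. The paper builds an explicit basis: it notes that $\pa*{\binom{i}{j}}_{0\le i,j<m}$ is unit upper-triangular and $\pa*{r^{s-1}}_{1\le r,s<p}$ is Vandermonde, takes their tensor product, twists by the diagonal map $\pa*{x_r^{(j)}}\mapsto\pa*{r^{-j}x_r^{(j)}}$, and uses CRT to realize each resulting basis vector as some $\v_{i}$ with $i<t$. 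You instead argue by duality that no nonzero $\u$ is orthogonal to all $\v_i$, reducing to linear independence over $\FF_p$ of the functions $i\mapsto r^i\binom{i}{j}$, which you establish via the annihilator operators $\Delta_{r_0}$. The paper's route is more constructive and pinpoints exactly which $m(p-1)$ indices carry a basis; yours is slicker and avoids the tensor/CRT bookkeeping, at the cost of being non-constructive. Your closing caveat is the right one to flag: since $\Delta_{r_0}$ sends $\binom{i}{j}\mapsto (r-r_0)\binom{i}{j}+r\binom{i}{j-1}$ for the $r$-summand, each application keeps $\deg_j<m$ with leading coefficient scaled by $r-r_0\ne 0$, so the induction closes.
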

\begin{proof}
Let $\ell$ be the integer such that $p^{\ell - 1} < m \le p^\ell$.
First, we show that the vectors $\v_0, \dots, \v_n$ has cycle of $\pa*{p-1}p^\ell$ modulo $p$.
By Lucas's Theorem (see \cite{fine1947binomial}) for any $j < m \le p^\ell$
\[
\binom{i + p^\ell}{j} \equiv \binom{i}{j} \pmod{p}.
\]
Also, by Fermat's Little Theorem, $r^{p-1} \equiv 1 \pmod{p}$.
So
\[
\v_{i + \pa*{p-1}p^\ell}
  \equiv \pa*{\binom{i + \pa*{p-1}p^\ell}{j} r^{i + \pa*{p-1}p^\ell - j}}_{\substack{0\le j < m \\ 1 \le r < p}}
  \equiv \pa*{\binom{i}{j} r^{i - j}}_{\substack{0\le j < m \\ 1 \le r < p}}
  \equiv \v_i \pmod{p}.
\]

Next, we show that $\v_0 \bmod{p}, \dots, \v_{n - 1} \bmod{p}$ contains a basis of $\FF_p^{m\pa*{p-1}}$.
We take a look on two vectors sequences $\u_0, \dots, \u_{m-1} \in \FF_p^m$ and
$\w_1, \dots, \w_{p-1} \in \FF_p^{p-1}$ defined by
\[
\u_i = \pa*{\binom{i}{0}, \dots, \binom{i}{m-1}} \quad\text{, and}\quad
\w_i = \pa*{1, 2^{i-1}, \dots, \pa*{p-1}^{i-1}}.
\]
The sequence $\u_0,\dots,\u_{m-1}$ is a basis of $\FF_p^m$ since its vectors form a triangular matrix
\[
\pa*{\begin{matrix}
- & \u_{0} & -\\
- & \u_{1} & -\\
 & \vdots\\
- & \u_{m-1} & -
\end{matrix}}=\pa*{\begin{matrix}
1 & 0 & 0 & \cdots & 0\\
1 & 1 & 0 & \cdots & 0\\
1 & 2 & 1 & \cdots & 0\\
\vdots & \vdots & \vdots & \ddots & \vdots\\
1 & m-1 & \binom{m-1}{2} & \cdots & 1
\end{matrix}}.
\]
The sequence $\w_1, \dots, \w_{p-1}$ is a basis of $\FF_p^{p-1}$ since its vectors are columns of Vandermonde matrix.
Since both sequences are basis of their vector spaces then the set $\set*{\u_{t} \otimes \w_{s} : 0\le t < m, 1 \le s < p}$ is basis of the tensor product $\FF_p^m \otimes \FF_p^{p-1} \cong \FF_p^{m\pa*{p-1}}$.

Define $T:\FF_p^{m\pa*{p-1}} \to \FF_p^{m\pa*{p-1}}$ to be the linear map which multiplies the element in the $\pa*{j,r}$ coordinates by $r^{-j}$ i.e.
\[
T\pa*{\pa*{x^{\pa{j}}_r}_\tindex} = \pa*{r^{-j} x^{\pa{j}}_r}_\tindex.
\]
The map $T$ is an invertible, so the set $B = \set*{T\pa*{\u_{t} \otimes \w_{s}} : 0\le t < m, 1 \le s < p}$ is also a basis of $\FF_p^{m\pa*{p-1}}$.

Moreover, the basis $B$ is contained in $\v_0 \bmod{p}, \dots, \v_{n - 1} \bmod{p}$.
Indeed, for every $0\le t < m$ and $1 \le s < p$ we set $i^{\pa{t}}_s$ to be the positive integer such that $i^{\pa t}_s \equiv t \pmod{p^{\ell}}$ and $i^{\pa t}_s \equiv s - 1 \pmod{p-1}$.
By the Chinese Reminder Theorem there exists such $i^{\pa t}_s$ which also satisfy $i^{\pa t}_s < p^\ell \pa*{p-1} < mp^2 \le n$.
So for any $T\pa*{\u_t \otimes \w_s}\in B$
\begin{align*}
T\pa*{\u_t \otimes \w_s}
  &= T\pa*{\pa*{\binom{t}{j}}_{0\le j<m} \otimes \pa*{r^{s-1}}_{1\le r<p}} \\
  &= T\pa*{\pa*{\binom{t}{j} r^{s-1}}_\tindex} \\
  &= T\pa*{\pa*{\binom{i^{\pa t}_s}{j} r^{i^{\pa t}_s}}_\tindex} \\
  &= \pa*{\binom{i^{\pa t}_s}{j} r^{i^{\pa t}_s-j}}_\tindex \\
  &= \v_{i^{\pa t}_s}.
\end{align*}
We proved the requirements of \autoref{lem:rnd-walks:cycle-minimal-weight}, so
\[
\sigma\pa*{\v_0,\dots,\v_{n-1}} \ge \frac{n}{\pa*{p-1}p^{\ell}} - 1 > \frac{n}{p^2 m} - 1. \qedhere
\]
\end{proof}

\begin{proof}[Proof of \autoref{prop:hasse:coeff-prob}]
Define
\[
S=\set*{\pa*{x^{\pa{j}}_r}_{\substack{0\le j < m \\ 1 \le r < p}} \in V \cond \forall j, r, \quad x^{\pa{j}}_r\equiv a^{\pa{j}}_r\pmod{p^{k_j}}} \subseteq V,
\]
so by \eqref{eq:hasse:hasse-vectors} we get
\[
\Pr\pa*{\bigwedge_\tindex D^{\pa{i}}f\pa*{r} \equiv a^{\pa{i}}_r\pmod{p^{k_i}}}\\
  = \Pr\pa*{\sum_{i=0}^{n} \rv_i \v_i \in S}.
\]
\autoref{prop:zero-random-walk} gives
\begin{equation*}
\Pr\pa*{\sum_{i=0}^{n} \rv_i \v_i \in S}
  =\frac{\#S}{\#V}+O\pa*{\#S\exp\pa*{-\frac{\tau\sigma\pa*{\v_0,\dots,\v_n}}{p^{2K}}}}.
\end{equation*}
Since $\#S=\prod_{i=0}^{m-1}p^{\pa*{K-k_i}\pa*{p-1}}=p^{Km\pa*{p-1}} p^{-N\pa*{p-1}}\le p^{Kmp}$, $\#V=p^{Km\pa*{p-1}}$,
and by \autoref{cor:hasse:v-minimal-weight}
\[
\Pr\pa*{\sum_{i=0}^{n}\rv_{i}\v_{i}\in S}
  = p^{-N\pa*{p-1}}+O\pa*{\exp\pa*{-\frac{\tau n}{p^{2K+2}m}+Kmp\log p}}. \qedhere
\]
\end{proof}
\section{Proofs of the main theorems}
\label{sec:main}

For a polynomial $f \in \ZZ_p\br*{X}$ and $r \in \set*{0,\dots, p-1}$, we define $f_r\pa*{X}$ to be the polynomial $f\pa*{r + pX}$.
Note that by Taylor expansion we have that
\begin{equation}
\label{eq:main:fr-taylor}
f_r\pa*{X} = D^{\pa{0}}f\pa*{r} + D^{\pa{1}}f\pa*{r} p X + D^{\pa{2}} f\pa*{r} p^2 X^2 + \dots \quad.
\end{equation}
Each integer root of $f_r$ correspond to an integer root of $f$ which congruent to $r$ modulo $p$.
Hence, we divide the roots of $f$ into $p$ sets by their values modulo $p$ and get
\begin{equation}
\label{eq:dsets-split}
\rdset{f} = \sum_{d_0 + \dots + d_{p-1} = d} \; \prod_{r=0}^{p-1} \rdset[d_r]{f_r}.
\end{equation}
Denote the sequence of $p-1$ polynomials $\pa*{f_1, f_2, \dots, f_{p-1}}$ with $\polysplit$. We stress that $f_0$ does not appear in this sequence.

\begin{lem}
\label{lem:main:almost-uniform}
Let $f$ be a random polynomial with coefficients satisfying \autoref{main-assumption} and let $0<\varepsilon<1$.
Then there exists $C_0>0$ such that for any positive integer $k \le \frac{\left(1-\varepsilon\right)\log n}{2\log p}$ and a sequence of polynomials $\hseq = \pa*{h_1, \dots, h_{p-1}}\in\Upsilon_{k}^{p-1}$, we have
\[
\Pr\pa*{\polysplit \equiv \hseq \pmod{p^k}}
  = \pa*{\frac{1}{\#\Upsilon_k}}^{p-1}+O\pa*{\exp\pa*{-C_0n^\varepsilon}}
\]
as $n\to\infty$.
Moreover, $C_0$ is dependent only on $\varepsilon$ and $\tau$ from \autoref{main-assumption}.
\end{lem}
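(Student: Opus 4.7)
The plan is to translate the congruence $\polysplit \equiv \hseq \pmod{p^k}$ into a conjunction of congruences on the Hasse derivatives $D^{\pa{j}} f\pa*{r}$, and then invoke \autoref{prop:hasse:coeff-prob} essentially as a black box.

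First, I would write each $h_r \in \Upsilon_k$ uniquely as $h_r\pa*{X} = \sum_{j=0}^{k-1} \bar{a}_j^{\pa{r}} p^j X^j$ with $\bar{a}_j^{\pa{r}} \in \rmod{p^{k-j}}$, and invoke the Taylor expansion \eqref{eq:main:fr-taylor} to see that $f_r \equiv h_r \pmod{p^k}$ if and only if $D^{\pa{j}} f\pa*{r} \equiv a_j^{\pa{r}} \pmod{p^{k-j}}$ for every $0 \le j < k$, where $a_j^{\pa{r}} \in \ZZ_p$ is an arbitrary lift of $\bar{a}_j^{\pa{r}}$. Intersecting over $r = 1, \dots, p-1$ recasts the probability in question as the joint probability of the $(p-1)k$ congruences appearing in \autoref{prop:hasse:coeff-prob}.

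Second, I would apply that proposition with $m = k$ and $k_j = k - j$, so that $K = k$ and $N = \sum_{j=0}^{k-1}\pa*{k-j} = k\pa*{k+1}/2$. The hypothesis $m < n/p^2$ is automatic since $k = O\pa*{\log n}$. The main term of the conclusion is $p^{-N\pa*{p-1}} = p^{-k\pa*{k+1}\pa*{p-1}/2}$, which by \eqref{eq:ups:size} equals $\pa*{1/\#\Upsilon_k}^{p-1}$, matching the claim exactly.

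Finally, I would control the error. The constraint $k \le \pa*{1-\varepsilon}\log n / \pa*{2\log p}$ gives $p^{2k} \le n^{1-\varepsilon}$, so the dominant term in the exponent of the error from \autoref{prop:hasse:coeff-prob} is
\[
-\frac{\tau n}{p^{2k+2} k} \le -\frac{\tau\, n^{\varepsilon}}{p^2 k},
\]
while the secondary contribution $Kmp\log p = k^2 p \log p$ is $O\pa*{\log^2 n}$ and is dwarfed by the preceding quantity because $k = O\pa*{\log n}$. This yields $O\pa*{\exp\pa*{-C_0 n^{\varepsilon}}}$ for a constant $C_0 > 0$ depending only on $\varepsilon$ and $\tau$ (after absorbing the harmless polynomial-in-$\log n$ factor by slightly shrinking $\varepsilon$ if needed). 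No substantive obstacle is expected: the entire lemma is a repackaging of the random-walk equidistribution of Hasse derivatives into the language of the $\Upsilon_k$-parametrization.
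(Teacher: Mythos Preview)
Your proposal is correct and follows essentially the same route as the paper: translate $\polysplit \equiv \hseq \pmod{p^k}$ via the Taylor expansion \eqref{eq:main:fr-taylor} into the conjunction of congruences $D^{\pa{j}}f\pa*{r}\equiv a^{\pa{j}}_r \pmod{p^{k-j}}$, apply \autoref{prop:hasse:coeff-prob} with $m=k$, $k_j=k-j$ (so $K=k$, $N=k\pa*{k+1}/2$), identify the main term with $\pa*{1/\#\Upsilon_k}^{p-1}$ via \eqref{eq:ups:size}, and simplify the error using $p^{2k}\le n^{1-\varepsilon}$. Your write-up is in fact slightly more explicit than the paper's on the error bookkeeping (the paper simply asserts the final bound), and your parenthetical remark about absorbing the $1/k$ factor by shrinking $\varepsilon$ is a reasonable way to address the stray logarithmic loss.
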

\begin{proof}
As each $h_r \in \Upsilon_k$, it is of the form $h_r\pa*{X} = \bar{a}^{\pa{0}}_r + \bar{a}^{\pa{1}}_r p X + \dots +\bar{a}^{\pa{k-1}}_r p^{k-1} X^{k-1}$.
So by \eqref{eq:main:fr-taylor} we have
\[
f_r\equiv h_r \pmod{p^k} \iff
  D^{\pa{i}}f\pa*{r} \equiv \bar{a}^{\pa{j}}_r  \pmod{p^{k-j}},
  \quad j=0,\dots,k-1.
\]
We apply \autoref{prop:hasse:coeff-prob} with $m=k$, $k_i = k - i$ and $a^{\pa{i}}_r$ be some lift of $\bar{a}^{\pa{i}}_r$, so $K= k$, $N = k \pa*{k+1} / 2$ and
\begin{align*}
\Pr\pa*{\polysplit \equiv \hseq \pmod{p^k}}
  &= \Pr\pa*{\bigwedge_{r=1}^{p-1}  f_r\equiv h_r\pmod{p^k}} \\
  &= \Pr\pa*{\bigwedge_{\tindexarg{k}} D^{\pa{j}} f\pa*{r}\equiv \bar{a}_r^{\pa{i}}\pmod{p^{k-j}}} \\
  &= p^{-k\pa*{k+1}\pa*{p-1}/2}+O\pa*{\exp\pa*{-\frac{\tau n}{p^{2k+2}k}+k^2 p\log p}}
\end{align*}
where $\tau$ is taken from \autoref{main-assumption}.
This finish the proof.
Indeed, the main term is $\pa*{1/\#\Upsilon_k}^{p-1}$ by \eqref{eq:ups:size}
and by the assumption on $k$, the error term is $O\pa*{\exp\pa*{-C_0n^\varepsilon}}$ as needed.
\end{proof}

\begin{lem}
\label{ex-sum-prod}
Let $f$ be a random polynomial with coefficients satisfying \autoref{main-assumption}.
Then for any $d = d\pa*{n}$ such that $\limsup_{n\to\infty} d / \log n < \pa*{16 \log p}^{-1}$ and for any $\varepsilon > 0$  there exists a constant $C > 0$, as defined in \eqref{eq:main-exponent-const}, satisfying
\begin{equation*}
  \Ex\br*{\sum_{d_1 + \dots + d_{p-1} = d} \; \prod_{r=1}^{p-1} \rdset[d_r]{f_r}} =
    \gamma\pa*{d} + O\pa*{n^{-C+\varepsilon}}
\end{equation*}
as $n\to\infty$.
\end{lem}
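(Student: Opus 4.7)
The plan is to reduce the product-of-root-counts expectation to a product of single-factor expectations by exploiting the almost-independence of the reductions $(f_1,\dots,f_{p-1})\bmod p^k$ provided by \autoref{lem:main:almost-uniform}. I would choose $k=k(n)\approx\log n/(2\log p)$, right at the threshold allowed by \autoref{lem:main:almost-uniform}, since this turns out to be the choice that reproduces the exponent $C$ of \eqref{eq:main-exponent-const}.

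By linearity the expectation splits as $\sum_{d_1+\dots+d_{p-1}=d}\Ex\br*{\prod_r\rdset[d_r]{f_r}}$. For each composition I would first pass from true root counts to Henselian root counts: replace $\rdset[d_r]{f_r}$ by $\hdset[d_r]{\floor*{k/2}}{f_r}$, the number of $d_r$-sets of $\floor*{k/2}$-Henselian roots of $f_r$. This count depends only on $f_r\bmod p^{k-1}$, hence on $f_r\bmod p^k\in\Upsilon_k$. The replacement error is controlled by the event that some $f_r$ has a non-simple root modulo $p^{\floor*{k/2}}$ or vanishes identically modulo $p^{\floor*{k/2}}$, whose probability I would bound by combining \autoref{prop:hasse:coeff-prob} (applied to joint vanishing of the low-order Hasse derivatives $D^{(0)}f$, $D^{(1)}f$) with the trivial bound $\rdset[d_r]{f_r}\le\binom{n}{d_r}$, in the spirit of \autoref{lem:ups:henselian-approx}.

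After this reduction, $\prod_r\hdset[d_r]{\floor*{k/2}}{f_r}$ is a deterministic function of $\polysplit\bmod p^k\in\Upsilon_k^{p-1}$, so
\[
\Ex\br*{\prod_r\hdset[d_r]{\floor*{k/2}}{f_r}}=\sum_{\hseq\in\Upsilon_k^{p-1}}\Pr\pa*{\polysplit\equiv\hseq\pmod{p^k}}\prod_r\hdset[d_r]{\floor*{k/2}}{h_r}.
\]
\autoref{lem:main:almost-uniform} lets me replace the joint law by the uniform one on $\Upsilon_k^{p-1}$, with error $\exp(-C_0 n^{\varepsilon'})$ times the $\ell^{1}$-norm of the summand, which is superpolynomially small. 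Under the uniform product law the factors decouple into independent marginals $h_r$ uniform on $\Upsilon_k$, and the proof of \autoref{prop:ups:d-sets-in-ups} supplies
\[
\Ex\br*{\hdset[d_r]{\floor*{k/2}}{g}}=\beta\pa*{d_r}+O\pa*{p^{\pa*{-\frac12+\frac12\he_p\pa*{2d_r/k}}k}}
\]
for each factor. Summing $\prod_r\beta(d_r)$ over compositions and invoking \eqref{eq:gamma-fromula} returns the main term $\gamma(d)$.

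The main technical obstacle is the error bookkeeping. Three contributions compete: the per-factor Henselian error $p^{(-1/2+\he_p(2d/k)/2)k}$ from \autoref{prop:ups:d-sets-in-ups}, which is the dominant one; the almost-uniform error, which is superpolynomially small; and the composition count $\binom{d+p-2}{p-2}=O(d^{p-2})$, which is merely polylogarithmic. One must simultaneously respect the constraint $k\le(1-\varepsilon')\log n/(2\log p)$ of \autoref{lem:main:almost-uniform} and the entropy-based constraint of \autoref{prop:ups:d-sets-in-ups}; the hypothesis $\limsup_n d/\log n<(16\log p)^{-1}$ is calibrated precisely so that the optimal choice $k\sim\log n/(2\log p)$ fits within both, and substituting it into the Henselian error exponent produces $n^{-C+\varepsilon}$ with $C=\tfrac14-\tfrac14\he_p\pa*{4\log p\cdot\limsup_n d/\log n}$, as required by \eqref{eq:main-exponent-const}.
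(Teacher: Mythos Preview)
Your strategy is the paper's: take $k\approx(1-\varepsilon_1)\log n/(2\log p)$, use \autoref{lem:main:almost-uniform} to make the law of $\polysplit\bmod p^k$ uniform on $\Upsilon_k^{p-1}$, apply \autoref{prop:ups:d-sets-in-ups} factor by factor, and assemble the main term via \eqref{eq:gamma-fromula}. The only structural difference is the order of operations. The paper conditions on $\polysplit\equiv\hseq\pmod{p^k}$ first, reweights uniformly over $\hseq$ to define an auxiliary tuple $\gseq$ whose reduction is exactly uniform, and then invokes \autoref{prop:ups:d-sets-in-ups} as a black box on each $g_r$; you instead pass to Henselian counts first and only then apply almost-uniformity. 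Your ordering has the merit of making the factorisation $\Ex\bigl[\prod_r\,\cdot\,\bigr]=\prod_r\Ex[\,\cdot\,]$ transparent, since $\hdset[d_r]{\floor{k/2}}{f_r}$ is a function of $f_r\bmod p^k$ alone and these reductions become independent under the uniform product measure.

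There is, however, one point where your sketch is not yet a proof. The replacement error for the \emph{product} $\prod_r\rdset[d_r]{f_r}\to\prod_r\hdset[d_r]{\floor{k/2}}{f_r}$ cannot be controlled by pairing the bound $\rdset[d_r]{f_r}\le\binom{n}{d_r}$ with the non-simple-root probability $O(p^{-k/2})$: the resulting term $n^d\,p^{-k/2}$ is super\-polynomially large in your range of $d$. You must run the three-event split of \autoref{lem:ups:henselian-approx} at the level of the whole vector $\polysplit$. Set $Z=\{\exists r:\,f_r\equiv0\pmod{p^{\floor{k/2}}}\}$; on $Z$ the crude bound $\prod_r\binom{n}{d_r}\le n^d$ is paired with $\Pr(Z)=O(p^{-k^2/8})$, and this is where the hypothesis $\limsup d\log n/k^2<\log p/8$ (hence the constraint on $\limsup d/\log n$) is actually used. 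On the complementary bad event $M$ (some $f_r$ has a non-simple root but no $f_r$ vanishes modulo $p^{\floor{k/2}}$) \emph{every} $f_r$ is nonzero modulo $p^{\floor{k/2}}$, so each $\rdset*{f_r}<k/2$ and the product is at most $\prod_r\binom{k/2}{d_r}$; this pairs correctly with $\Pr(M)=O(p^{-k/2})$ and yields the entropy term you want. With this refinement your bookkeeping goes through and matches the paper's error exponent.
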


\begin{proof}
Set
\begin{equation}
\label{eq:rdsetu}
\rdsetu{f} = \sum_{d_1 + \dots + d_{p-1} = d} \; \prod_{r=1}^{p-1} \rdset[d_r]{f_r}.
\end{equation}
Let $k=\floor{\frac{\pa*{1-\dlt}\log n}{2\log p}}$ where $\dlt$ is a positive real to be defined later.
So  we apply the law of total expectation and
\autoref{lem:main:almost-uniform} to get $C_0>0$ such that
\begin{equation}
\label{eq:main:total-expectation}
\begin{aligned}
\Ex\br*{\rdsetu{f}}
  &= \sum_{\hseq\in\Upsilon_k^{p-1}}\Ex\br*{\rdsetu{f}\cond \polysplit \equiv \hseq\pmod{p^k}} \Pr\pa*{\polysplit \equiv \hseq\pmod{p^k}} \\
  &= \sum_{\hseq\in\Upsilon_k^{p-1}}\Ex\br*{\rdsetu{f}\cond \polysplit \equiv \hseq\pmod{p^k}} \pa*{\pa*{\frac{1}{\#\Upsilon_k}}^{p-1}+O\pa*{\exp\pa*{-C_0 n^\dlt}}}.
\end{aligned}
\end{equation}
Since $\rdsetu{f} \le \rdset{f} \le n^d$ (see \eqref{eq:dsets-split} and \eqref{eq:rdsetu}) and $\#\Upsilon_k=p^{k\pa*{k+1}/2}$ (see \eqref{eq:ups:size}) then we can bound the error term in \eqref{eq:main:total-expectation}
\begin{equation*}
\begin{aligned}
& \hspace{-6em}
\sum_{\hseq\in\Upsilon_k^{p-1}}\Ex\br*{\rdsetu{f}\cond \polysplit \equiv \hseq \pmod{p^k}} \exp\pa*{-C_0 n^\dlt} \\
  &=O\pa*{n^d p^{k\pa*{k+1}\pa*{p-1}/2}\exp\pa*{-C_0 n^\dlt}} \\
  &=O\pa*{\exp\pa*{d\log n + k^2 \log p - C_0 n^\dlt}} \\
  &=O\pa*{\exp\pa*{C_1 \log^2 n - C_0 n^\dlt}} \\
  &=O\pa*{\exp\pa*{-C_2 n^\dlt}},
\end{aligned}
\end{equation*}
for some constants $C_1, C_2 > 0$.
So \eqref{eq:main:total-expectation} becomes
\begin{equation}
\label{eq:main:total-expectation-clean}
\Ex\br*{\rdsetu{f}} = \pa*{\frac{1}{\#\Upsilon_k}}^{p-1} \sum_{\hseq\in\Upsilon_k^{p-1}}\Ex\br*{\rdsetu{f}\cond \polysplit \equiv \hseq\pmod{p^k}} + O\pa*{\exp\pa*{-C_2 n^{\dlt}}}.
\end{equation}
To evaluate the main term, we use linearity of expectation and change the order of summation to get
\begin{multline}
  \label{eq:main:conditend-dsets}
  \pa*{\frac{1}{\#\Upsilon_k}}^{p-1} \sum_{\hseq\in\Upsilon_k^{p-1}}\Ex\br*{\rdsetu{f}\cond \polysplit \equiv \hseq \pmod{p^k}} \\
  \begin{aligned}
    &= \pa*{\frac{1}{\#\Upsilon_k}}^{p-1} \sum_{\hseq\in\Upsilon_k^{p-1}} \; \sum_{d_1 + \dots + d_{p-1} = d} \Ex\br*{ \prod_{r=1}^{p-1} \rdset[d_r]{f_r} \cond \polysplit \equiv \hseq \pmod{p^k}} \\
    &= \sum_{d_1 + \dots + d_{p-1} = d}\pa*{\frac{1}{\#\Upsilon_k}}^{p-1} \sum_{\hseq\in\Upsilon_k^{p-1}}\Ex\br*{ \prod_{r=1}^{p-1} \rdset[d_r]{f_r} \cond \polysplit \equiv \hseq \pmod{p^k}}
  \end{aligned}
\end{multline}

Next we evaluate the summands of the outer sum.
Let $\gseq = \pa*{g_1, \dots, g_{p-1}}$ be a sequence of random polynomials in $\ZZ_p\br*{X}_n$ distributed according to the law
\[
\Pr\pa*{\gseq \in E}
  = \pa*{\frac{1}{\#\Upsilon_k}}^{p-1}\sum_{\hseq\in\Upsilon_k^{p-1}}\Pr\pa*{\polysplit \in E \cond \polysplit \equiv \hseq \pmod{p^k}}
\text{,}\quad E\subseteq \pa*{\ZZ_p\br*{X}_n}^{p-1}.
\]
This distribution is well-defined for $n$ sufficiently large, since $\Pr\pa*{\polysplit \equiv \hseq \pmod{p^k}}$ is bounded away from zero by \autoref{lem:main:almost-uniform}.

On the one hand, we have
\begin{equation}
\label{eq:main:expected-prod-1}
\Ex\br*{\prod_{r=1}^{p-1}\rdset[d_r]{g_r}}
  = \pa*{\frac{1}{\#\Upsilon_k}}^{p-1} \sum_{\hseq\in\Upsilon_k^{p-1}} \Ex\br*{\prod_{r=1}^{p-1}\rdset[d_r]{f_r}\cond \polysplit \equiv \hseq \pmod{p^k}}.
\end{equation}
On the other hand, the sequence $\gseq \bmod{p^k}$ is distributed uniformly in $\pa*{\Upsilon_k}^{p-1}$, hence $g_1\bmod{p^k},\dots,g_{p-1}\bmod{p^k}$ are i.i.d.\ and distributed uniformly in $\Upsilon_k$.
Set $L = \limsup_{n\to\infty} d / \log n$ so for any $\dlt < 1 - \sqrt[3]{16L\log p}$
\begin{equation*}
\limsup_{n\to\infty} \frac{d\log n}{k^2}
  \le \frac{2 L \log^2 p}{\pa*{1-\dlt}^3} < \frac{\log p}{8}.
\end{equation*}
Hence, we can apply \autoref{prop:ups:d-sets-in-ups} and obtain
\begin{equation}
\label{eq:main:expected-prod-2}
\Ex\br*{\prod_{r=1}^{p-1} \rdset[d_r]{g_r}}
  = \prod_{r=1}^{p-1} \Ex\br*{\rdset[d_r]{g_r}}
  = \prod_{r=1}^{p-1} \pa*{\beta\pa*{d_r} + O\pa*{p^{\pa*{-\frac{1}{2} + \frac{1}{2}\he_p\pa*{\frac{2d}{k}}}k}}}.
\end{equation}

By \autoref{alpha-beta-asymptotic} we have that $\beta\pa*{d_r} = O\pa*{1}$ so expanding the left most side of \eqref{eq:main:expected-prod-2} gives
\begin{equation}
\label{eq:main:expected-prod-3}
\Ex\br*{\prod_{r=1}^{p-1} \rdset[d_r]{g_r}}
  = \prod_{r=1}^{p-1} \beta\pa*{d_r} + O\pa*{p^{\pa*{-\frac{1}{2} + \frac{1}{2}\he_p\pa*{\frac{2d}{k}}}k}}.
\end{equation}
We get expression for the summands of the outer sum of \eqref{eq:main:conditend-dsets} by combining \eqref{eq:main:expected-prod-1} and \eqref{eq:main:expected-prod-3}:
\begin{equation*}
\pa*{\frac{1}{\#\Upsilon_k}}^{p-1} \sum_{\hseq\in\Upsilon_k^{p-1}} \Ex\br*{\prod_{r=1}^{p-1}\rdset[d_r]{f_r}\cond \polysplit \equiv \hseq \pmod{p^k}}
  = \prod_{r=1}^{p-1} \beta\pa*{d_r} + O\pa*{p^{\pa*{-\frac{1}{2} + \frac{1}{2}\he_p\pa*{\frac{2d}{k}}}k}}.
\end{equation*}

We return to \eqref{eq:main:conditend-dsets} and set the summands of the outer sum:
\begin{multline}
\label{eq:main:conditend-dsets-2}
\pa*{\frac{1}{\#\Upsilon_k}}^{p-1} \sum_{\hseq\in\Upsilon_k^{p-1}}\Ex\br*{\rdsetu{f}\cond \polysplit \equiv \hseq \pmod{p^k}} \\
\begin{aligned}
  &= \sum_{d_1 + \dots + d_{p-1} = d} \pa*{\prod_{r=1}^{p-1} \beta\pa*{d_r} + O\pa*{p^{\pa*{-\frac{1}{2} + \frac{1}{2}\he_p\pa*{\frac{2d}{k}}}k}}} \\
  &= \sum_{d_1 + \dots + d_{p-1} = d} \prod_{r=1}^{p-1} \beta\pa*{d_r} + \binom{p + d - 2}{p-1} O\pa*{p^{\pa*{-\frac{1}{2} + \frac{1}{2}\he_p\pa*{\frac{2d}{k}}}k}} \\
  &= \gamma\pa*{d} + \binom{p + d - 2}{p-1} O\pa*{p^{\pa*{-\frac{1}{2} + \frac{1}{2}\he_p\pa*{\frac{2d}{k}}}k}}.
\end{aligned}
\end{multline}

Consider the error term, so for $n$ sufficiently large we have
\begin{equation}
\label{eq:main:final-error-term-1}
\begin{aligned}
\binom{p + d - 2}{p-1} p^{\pa*{-\frac{1}{2} + \frac{1}{2}\he_p\pa*{\frac{2d}{k}}}k}
  &= O\pa*{d^p p^{\pa*{-\frac{1}{2} + \frac{1}{2}\he_p\pa*{\frac{2d}{k}}}k}} \\
  &= O\pa*{\exp\pa*{p \log d + \pa*{-\frac{1}{2} + \frac{1}{2}\he_p\pa*{\frac{2d}{k}}}k \log p}} \\
  &= O\pa*{\exp\pa*{p \log \pa*{L\log n} + \pa*{1 - \dlt}\pa*{-\frac{1}{4} + \frac{1}{4}\he_p\pa*{\frac{2d}{k}}} \log n}} \\
  &= O\pa*{\exp\pa*{\pa*{1 - \dlt}\pa*{-\frac{1}{4} + \frac{1}{4}\he_p\pa*{\frac{2d}{k}} + \dlt} \log n}}.
\end{aligned}
\end{equation}
Note that since that $L < \pa*{16\log p}^{-1}$ we have that $\limsup_{n\to\infty} 2d/k < 4 L \log p < 1/4$.
Moreover, since $\he_p$ is increasing in $\pa*{0, 1/2}$, for $n$ sufficiently large
\begin{equation*}
 - \frac{1}{4} + \frac{1}{4}\he_p\pa*{\frac{2d}{k}}
  \le \frac{1}{4} - \frac{1}{4}\he_p\pa*{4L\log p} + \dlt = -C + \dlt,
\end{equation*}
where $C$ is defined as in \eqref{eq:main-exponent-const}.
We return to \eqref{eq:main:final-error-term-1} and choose $\dlt < \min\pa*{\varepsilon / \pa*{2+C}, 1- \sqrt[3]{4L\log p}}$ to get that
\begin{equation*}
\binom{p + d - 2}{p-1} p^{\pa*{-\frac{1}{2} + \frac{1}{2}\he_p\pa*{\frac{d}{k}}}k}
  = O\pa*{\exp\pa*{\pa*{1-\dlt}\pa*{-C + 2\dlt}}\log n} = O\pa*{n^{-C + \varepsilon}}.
\end{equation*}
And \eqref{eq:main:conditend-dsets-2} becomes
\begin{equation*}
\pa*{\frac{1}{\#\Upsilon_k}}^{p-1} \sum_{\hseq\in\Upsilon_k^{p-1}}\Ex\br*{\rdsetu{f}\cond \polysplit \equiv \hseq \pmod{p^k}} = \gamma\pa*{d} + O\pa*{n^{-C + \varepsilon}}.
\end{equation*}
Plugging the last equation into \eqref{eq:main:total-expectation-clean} finish the proof.
\end{proof}

\begin{proof}[Proof of \autoref{thm:main}]
When conditioning on $p \nmid \rv_0$ no root is divided by $p$ and then $\rdset*{f_0} = 0$.
Hence $\rdset[0]{f_0} = 1$ and $\rdset[d_0]{f_0} = 0$ for $d_0 \ge 1$.
Plugging that in \eqref{eq:dsets-split} and adding expectation gives
\begin{equation}
\label{eq:dsets-split-main}
\Ex\br*{\rdset{f} \cond p \nmid \rv_0} = \Ex\br*{\sum_{d_1 + \dots + d_{p-1} = d} \prod_{r=1}^{p-1} \rdset[d_r]{f_r} \cond p \nmid \rv_0}.
\end{equation}
Note that \autoref{main-assumption} still holds after conditioning on $p\mid \rv_0$, so we can \autoref{ex-sum-prod} to finish the proof.
\end{proof}

\begin{proof}[Proof of \autoref{main-with-nonunit-root}]
By \autoref{pre:nonunit-roots} we have that almost surely
\begin{equation*}
\rdset[d_0]{f_0} = \begin{cases}
1\qquad&, d_0 = 0, \\
\indic{\rv_0 = 0} &, d_0 = 1, \\
0 &, d_0 > 1.
\end{cases}
\end{equation*}
Using \eqref{eq:dsets-split} gives
\begin{equation*}
\Ex\br*{\rdset{f}} = \Ex\br*{\sum_{d_1 + \dots + d_{p-1} = d} \prod_{r=1}^{p-1} \rdset[d_r]{f_r}} + \Ex\br*{\indic{\rv_0 = 0}\sum_{d_1 + \dots + d_{p-1} = d-1} \prod_{r=1}^{p-1} \rdset[d_r]{f_r}}.
\end{equation*}
By total law of expectation
\begin{equation}
\label{eq:dsets-split-nonunit}
\begin{aligned}
\Ex\br*{\rdset{f}} &= \Ex\br*{\sum_{d_1 + \dots + d_{p-1} = d} \prod_{r=1}^{p-1} \rdset[d_r]{f_r}} \\ &\qquad
+ \Ex\br*{\sum_{d_1 + \dots + d_{p-1} = d-1} \prod_{r=1}^{p-1} \rdset[d_r]{f_r} \cond \rv_0 = 0}\Pr\pa*{\rv_0 = 0}.
\end{aligned}
\end{equation}
And the proof is finished by using \autoref{ex-sum-prod} on each expectation on the right side of \eqref{eq:dsets-split-nonunit}.
Note that this can be done since \autoref{main-assumption} still holds after conditioning on $\rv_0$.
\end{proof}

\section{Large number of roots}
\label{sec:large-root-count}

In this section, we consider the probability that $f$ has a large number of roots relatively to $n$.

\begin{proof}[Proof of \autoref{large-root-count-bound}]
Let $r = r\pa*{n}$ be positive integer and set $d = \floor*{\frac{\log n}{17\log p}}$.
We define the following function $\varphi: \RR_+ \to \RR_+$ by
\begin{equation}
\varphi\pa*{x} = \begin{cases}
0  &, x \le d, \\
\frac{1}{d!} x \pa*{x - 1} \cdots \pa*{x - d + 1} &,x > d.
\end{cases}
\end{equation}
Note that $\varphi$ is non-decreasing function hence Markov inequality gives
\begin{equation}
\label{eq:lrc:markov-ineq}
\Pr\pa*{\rdset*{f} \ge r \cond p \nmid \rv_0} \le \frac{\Ex\br*{\varphi\pa*{\rdset*{f}} \cond p \nmid \rv_0}}{\varphi\pa*{r}},
\end{equation}
when $r > d$.

Assume that
\begin{equation}
\label{eq:lrc:limsup-assumption}
\limsup_{n\to \infty} \frac{d}{r} < 1,
\end{equation}
then for $n$ sufficiently large we have that $r > d$ and \eqref{eq:lrc:markov-ineq} holds.
For any non-negative integer $k$, we have that $\varphi\pa*{k} = \binom{k}{d}$.
In particular, we have that $\varphi\pa*{\rdset*{f}} = \rdset{f}$.
Therefore, \eqref{eq:lrc:markov-ineq} becomes
\begin{equation*}
\Pr\pa*{\rdset*{f} \ge r \cond p \nmid \rv_0} \le \frac{\Ex\br*{\rdset{f} \cond p \nmid \rv_0}}{\varphi\pa*{r}}.
\end{equation*}

From \autoref{thm:main}, we have that $\Ex\br*{\rdset{f} \cond p \nmid \rv_0} = \gamma\pa*{d} + O\pa*{n^{-C_0}}$ for some $C_0 > 0$.
Also from \autoref{gamma-estimate} we have that
\begin{equation*}
\log_p \gamma\pa*{d}
  \le - C_1 d^2 \\
  \le - C_2 \log^2 n.
\end{equation*}
Therefore,
\begin{equation}
\label{eq:lrc:ex-bound}
\Ex\br*{\rdset{f} \cond p \nmid \rv_0} = O\pa*{n^{-C_0}}.
\end{equation}

Next we bound the term $1/\varphi\pa*{r}$ from \eqref{eq:lrc:markov-ineq}.
We have that for $r > d$
\begin{equation*}
\varphi\pa*{r} = \frac{r}{d} \cdot \frac{r - 1}{d - 1} \cdots \frac{r - d + 1}{1} \ge \pa*{\frac{r}{d}}^d.
\end{equation*}
Hence,
\begin{equation}
\label{eq:lrc:phi-bound}
\frac{1}{\varphi\pa*{r}} = O\pa*{\pa*{\frac{d}{r}}^{d}}.
\end{equation}

We plug \eqref{eq:lrc:ex-bound} and \eqref{eq:lrc:phi-bound} into \eqref{eq:lrc:markov-ineq} to get
\begin{equation*}
\begin{aligned}
\Pr\pa*{\rdset*{f} \ge r \cond p \nmid \rv_0}
  &= O\pa*{\pa*{\frac{d}{r}}^{d} \cdot n^{-C_0}} \\
  &= O\pa*{\exp\pa*{d\log \frac{d}{r} - C_0 \log n}} \\
  &= O\pa*{\exp\pa*{\frac{\log n}{17 \log p} \log \frac{d}{r}- C_0 \log n}}.
\end{aligned}
\end{equation*}
From \eqref{eq:lrc:limsup-assumption}, for $n$ sufficiently large $\log\pa*{r/d} < 0$, hence
\begin{equation}
\label{eq:lrc:almost-final}
\Pr\pa*{\rdset*{f} \ge r \cond p \nmid \rv_0} = O\pa*{\exp\pa*{C_3 \log n \log \frac{d}{r}}}
\end{equation}
for some $C_3 > 0$.

We consider the case where $r = \log n$.
So since $p \ge 2$ we have
\begin{equation*}
\limsup_{n\to \infty} \frac{d}{r} = \frac{1}{17 \log p} < \frac{1}{10} < 1.
\end{equation*}
Hence \eqref{eq:lrc:limsup-assumption} holds, and from \eqref{eq:lrc:almost-final} we get
\begin{equation*}
  \Pr\pa*{\rdset*{f} \ge \log n \cond p \nmid \rv_0} = O\pa*{\exp\pa*{- C_3 \log 10 \log n}}.
\end{equation*}
Choosing $K = C_3 \log 10$ finish the proof of \hyperref[lrcb:log-case]{\autoref*{large-root-count-bound}.\eqref{lrcb:log-case}}.

Next, consider the case where $r = n^\lambda$ for some $0 < \lambda \le 1$.
Since $\limsup_{n\to \infty} \frac{d}{r} = 0$, the assumption \eqref{eq:lrc:limsup-assumption} holds.
Thus \eqref{eq:lrc:almost-final} also holds.
Moreover, because
\begin{equation*}
\log \frac{d}{r} \le \log \frac{\log n} {17 \log p} - \lambda \log n,
\end{equation*}
there exists $K > 0$ such that
\begin{equation*}
  \Pr\pa*{\rdset*{f} \ge n^{\lambda} \cond p \nmid \rv_0} = O\pa*{\exp\pa*{- K \log^2 n}}.
\end{equation*}
And the last equation finish the proof of \hyperref[lrcb:log-case]{\autoref*{large-root-count-bound}.\eqref{lrcb:power-case}}.
\end{proof}

\bibliographystyle{bibstyle}
\bibliography{bib/main}

\end{document}